\numberwithin{equation}{section}
\numberwithin{figure}{section}
\theoremstyle{plain}
\newtheorem{thm}{\protect\theoremname}[section]
  \theoremstyle{plain}
  \theoremstyle{definition}
  \theoremstyle{plain}
  \newtheorem{lem}[thm]{\protect\lemmaname}
  \newtheorem{cor}[thm]{\protect\corollaryname}
  \theoremstyle{plain}
	\newtheorem{rem}[thm]{\protect\remarkname}
  \theoremstyle{plain}
	\theoremstyle{plain}
  \providecommand{\definitionname}{Definition}
  \providecommand{\lemmaname}{Lemma}
  \providecommand{\theoremname}{Theorem}
  \providecommand{\corollaryname}{Corollary}
  \providecommand{\remarkname}{Remark}
  \providecommand{\propositionname}{Proposition}
  \providecommand{\examplename}{Example}
	\DeclareMathOperator{\loc}{loc}
	\DeclareMathOperator{\adj}{adj}
	\DeclareMathOperator{\ess}{ess}
	\DeclareMathOperator{\cp}{cap}
  \DeclareMathOperator{\ACL}{ACL}
	\DeclareMathOperator{\dist}{dist}
  \DeclareMathOperator{\diam}{diam}
\begin{document}

\title[Geometric theory of composition operators on Sobolev spaces]{Geometric theory of composition operators on Sobolev spaces}

\author{Vladimir Gol'dshtein and Alexander Ukhlov}

\begin{abstract}
In this paper, we present the basic concepts of the geometric theory of composition operators on Sobolev spaces. The main objects of the theory are topological mappings which generate bounded embedding operators on Sobolev spaces by the composition rule. This theory is in some sense a "generalization" of the theory of quasiconformal mappings, but the theory of composition operators is oriented to its applications to the Sobolev embedding theorems, the spectral theory of elliptic operators and continuum mechanics problems.
\end{abstract}
\maketitle
\footnotetext{\textbf{Key words and phrases:} Sobolev spaces, Composition operators.} 
\footnotetext{\textbf{2020 Mathematics Subject Classification:} 46E35, 30C65.}

\section{Introduction}

This review is devoted to the geometric theory of composition operators on Sobolev spaces. Composition (substitution) operators on Sobolev spaces arise in the Sobolev embedding theory \cite{S41} and in the quasiconformal mapping theory \cite{L71,N60}, and form the significant part of the geometric theory of the Sobolev spaces.

The geometric mapping theory of composition operators on Sobolev spaces goes back to the Reshetnyak problem (1968) on geometric characterizations of isomorphisms $\varphi^{\ast}$ of seminormed Sobolev spaces $L^1_n(\Omega)$ and $L^1_n(\widetilde{\Omega})$ generated by the composition rule $\varphi^{\ast}(f)=f\circ\varphi$. Here $\Omega$ and $\widetilde{\Omega}$ are domains in the $n$-dimensional Euclidean space $\mathbb R^n$, $n\geq 2$. In the framework of this problem, it was proven in \cite{VG75} that these isomorphisms are generated by quasiconformal mappings $\varphi:\Omega\to\widetilde{\Omega}$. 

This approach can be considered in the context of the prior knowledge about composition operators on spaces of continuous functions which are Banach algebras (see, \cite{DSh58}). The characterizations of  various structures of the space of continuous functions $C(S)$, whose isomorphisms define the topological space $S$ up to a topological mapping, were given in works by Banach, Stone, Eilenberg, Arens and Kelly, Hewitt, Gel'fand, and Kolmogorov. On another hand, M. Nakai \cite{N60} and L. Lewis \cite{L71} established an equivalence between isomorphisms of the Royden algebras and corresponding quasiconformal mappings.

Sobolev spaces are not Banach algebras and the characteristics of corresponding composition operators are completely different because of the global properties of Sobolev spaces. Consequently, the standard ideas of category theory are also inappropriate in the case of Sobolev spaces $L^1_p(\Omega)$ and $L^1_p(\widetilde{\Omega})$. In \cite{VG76} it was proven that a homeomorphism $\varphi : \Omega\to\widetilde{\Omega}$ generates an isomorphism of Sobolev spaces $L^1_p(\Omega)$ and $L^1_p(\widetilde{\Omega})$, $p>n$, if and only if $\varphi$ is a bi-Lipschitz mapping. This result was extended to the case $n-1<p<n$ in \cite{GR84} and to the case $1\leq p<n$ in \cite{M90} (see, also, \cite{V05}).

Mappings that generate isomorphisms of Besov spaces were considered in \cite{V89}, and Nikolskii-Besov spaces and Lizorkin-Triebel spaces were considered in \cite{V90}. In \cite{MS86} the theory of multipliers was applied to the change of variable problem in Sobolev spaces.

The bounded composition operators on Sobolev spaces go back to the Sobolev embedding theory \cite{S41}. In this paper, it was proven that smooth bi-Lipschitz homeomorphisms $\varphi: \Omega\to\widetilde{\Omega}$ generate a bounded operator
$$
\varphi^{\ast}: L^1_p(\widetilde{\Omega})\to L^1_p(\Omega),\,\,\,1\leq p\leq\infty,
$$
by the composition rule $\varphi^{\ast}(f)=f\circ\varphi$, and as a consequence, that Sobolev inequalities are preserved under the smooth bi-Lipschitz change of variables.

The next step in the theory of composition operators was given in \cite{GS82} where  extension operators in H\"older cusp domains were constructed using bounded composition operators from $L^1_p(\widetilde{\Omega})$ to $L^1_q(\Omega), q\leq p$, generated by reflections $\varphi:\Omega\to\widetilde{\Omega}$. This work initiated the development of the geometric analysis of Sobolev spaces in non-Lipschitz domains and required the characterization of homeomorphisms $\varphi: \Omega\to\widetilde{\Omega}$ that generate bounded operators
$$
\varphi^{\ast}: L^1_p(\widetilde{\Omega})\to L^1_q(\Omega),\,\,\,1\leq q\leq p\leq\infty.
$$
by the standard composition rule $\varphi^{\ast}(f)=f\circ\varphi$.

The partial solution of this problem, in the case $p=q$, was given in \cite{V88}. The case $q<p$ is more complicated. In this case a solution of this generalized Reshetnyak type problem was given in \cite{U93} in terms of integral characteristics of mappings of finite distortion (the limiting case $p=\infty$ was considered in \cite{GU10-1,GU10-2}) and is based on the countable-additive property of operator norms introduced in \cite{U93}. In the case $p=q=n$ these mappings coincide with quasiconformal mappings. In the general case they are called weak $(p,q)$-quasiconformal mappings. The part of the main result of \cite{U93} regarding the sufficiency of the integral condition was reproved in \cite{Kl12} with some additional conditions, and this article also included an interesting example of a $(p,q)$-quasiconformal mapping with $n<q\leq p<\infty$ which does not satisfy the Luzin $N^{-1}$-property.

In \cite{U93}, two main principles of the geometric theory of composition operators on Sobolev spaces were formulated:
\begin{itemize}
	\item The localization principle: There exist monotone countably additive set functions associated with the norms of composition operators generated by the restrictions of mappings to open subsets.
	\item The composition duality principle: Let a homeomorphism $\varphi: \Omega\to\widetilde{\Omega}$ generate a bounded composition operator
	$$
	\varphi^{\ast}: L^1_p(\widetilde{\Omega})\to L^1_q(\Omega),\,\,\,n-1< q\leq p<\infty.
	$$
	Then the inverse mapping $\varphi^{-1}: \widetilde{\Omega}\to\Omega$ generates a bounded composition operator
	$$
	\left(\varphi^{-1}\right)^{\ast}: L^1_{q'}(\Omega)\to L^1_{p'}(\widetilde{\Omega}),\,\,\,p'=\frac{p}{p-(n-1)}\leq q'=\frac{q}{q-(n-1)}.
	$$
\end{itemize}

In a number of subsequent works \cite{GGR95,GU10-2,U04,VU98,VU02,VU04,VU05} the foundations of the theory of weak $(p,q)$-quasiconformal mappings were given. The basic properties of weak $(p,q)$-quasiconformal mappings, such as the equivalence of the analytic description, in the terms of integral characteristics of mappings of finite distortion, to the capacitary description and the geometric description were proven. Within the framework of the weak inverse mapping theorem, based on the composition duality property, the regularity of inverse mappings was studied, including the limiting cases $q=n-1$ and $p=\infty$. In the recent, the theory of composition operators on Sobolev spaces has continued to develop intensively, as seen in \cite{GSU24,MU24_1, MU24_2, T15, V20,VP24}.

In the case $p=\infty$, homeomorphisms which generate bounded composition operators
$$
	\varphi^{\ast}: L^1_{\infty}(\widetilde{\Omega})\to L^1_q(\Omega),\,\,\,1\leq q\leq \infty,
$$
coincide with the Sobolev homeomorphisms of the class $L^1_q(\Omega;\widetilde{\Omega})$. The regularity of mappings inverse to Sobolev homeomorphisms in $L^1_q(\Omega;\widetilde{\Omega})$, $q\geq n-1$, was intensively studied in the last decades, see, for example, \cite{CHM,GU10-1,GU10-2,HK06,HKM06,U04}. 

We remark that in the proof of Theorem 1.2 \cite{CHM} the weak upper gradient of the inverse mapping $\varphi^{-1}$ is defined as the composition of a co-distortion function, which is a measurable function, with the inverse mapping $\varphi^{-1}$. If the mapping $\varphi$ does not possess the Luzin $N$-property, then this composition is not necessarily a measurable function. 
Therefore, the proof of the weak regularity of Sobolev homeomorphisms which is given in the work \cite{CHM} and is based on the area formula \cite{KMZ12} is incomplete. We give the detailed comments on Theorem 1.2 of \cite{CHM} in Section 4.2. So, we consider the weak regularity theorem for Sobolev homeomorphisms which possess the Luzin $N$-property \cite{GU10-1,GU10-2}.

Sobolev homeomorphisms which possess the Luzin $N^{-1}$-property, and hence transform measurable functions to measurable functions by the composition rule, are called \cite{Z69} measurable Sobolev homeomorphisms.
The approach to measurability which is based on the non-linear potential theory \cite{HM72} was suggested in \cite{U93,V89,V90}.

The applications of the geometric theory of composition operators on Sobolev spaces to the Sobolev embedding problems are based on the following  "anti-com\-mu\-ta\-tive diagram" which was suggested in \cite{GGu94} 
\[\begin{array}{rcl}
W^{1}_{p}(\widetilde{\Omega}) & \stackrel{\varphi^*}{\longrightarrow} & W^1_q(\Omega) \\[2mm]
\multicolumn{1}{c}{\downarrow} & & \multicolumn{1}{c}{\downarrow} \\[1mm]
L_s(\widetilde{\Omega}) & \stackrel{(\varphi^{-1})^*}{\longleftarrow} & L_r(\Omega)
\end{array}\]
and was used in \cite{GU09} for embeddings of weighted Sobolev spaces. 
In this diagram the operator $\varphi^{\ast}$ defined by the composition rule $\varphi^{\ast}(f)=f\circ\varphi$ is a bounded composition operator on Sobolev spaces induced by a homeomorphism $\varphi$ of $\Omega$ and $\widetilde{\Omega}$ and the operator $(\varphi^{-1})^{\ast}$ defined by the composition rule $(\varphi^{-1})^{\ast}(f)=f\circ\varphi^{-1}$ is a bounded composition operator on Lebesgue spaces. 

This method allows us to obtain Sobolev type embedding theorems in non-Lipschitz domains $\widetilde{\Omega}\subset\mathbb R^n$ that give an approach to the complicated problem of the non-Lipschitz analysis of Sobolev spaces.
By using the Min-Max Principle and Sobolev embedding theorems, we obtained applications of weak $(p,q)$-quasiconformal mappings to the spectral theory of elliptic operators (see, for example, \cite{GPU24,GHPU,GHU,GPU18_3,GU16,GU17}). In some cases 
the method based on composition operators allows one to obtain better estimates than the classical ones by L.~E.~Payne and H.~F.~Weinberger even in convex domains \cite{PW}. In a certain sense the method of composition operators on Sobolev spaces represents a remake of the classical method of conformal normalizations \cite{PS51} and gives spectral estimates in non-convex and non-Lipschitz domains even in the case of domains with fractal boundaries. 

The composition operators on Sobolev spaces have significant applications in the theory of extension (trace) operators on Sobolev spaces \cite{GS82,GU16S,KZ22}. The theory of set functions associated with composition operators was used to solve problems of Sobolev extension operators theory in \cite{KUZ22,U99,U20}.
The applications of the composition operators to the non-linear elasticity problems \cite{B81,HK04,Sv88} were given in \cite{GU23}.

Note that composition operators on Sobolev spaces admit a capacitory description \cite{GGR95,U93,VU98}  and therefore closely related to so-called $Q$-ho\-me\-o\-mor\-phisms \cite{MRSY}. The  $Q$-homeomorphisms represent a generalization of quasiconformal mappings from the point of view of the geometric function theory. The study of  $Q$-ho\-me\-o\-mor\-phisms is based on the capacitary (moduli) distortion of these classes and was under intensive development at last decades (see, for example, \cite{KRSS14,KRSS14M,K86,Sa15,S16,T15}). In the recent work \cite{MU24_2} it was proved that $Q$-homeomorphisms with integrable function $Q$, introduced in \cite{MRSY01}, coincide with mappings that generate bounded composition operators on Sobolev spaces.
\vskip 0.2cm
 
{\bf Some additional remarks.}
In the paper \cite{IOZ20} (2020) bi-Sobolev mappings $\varphi:\Omega\to\widetilde{\Omega}$ such that $\varphi\in L^1_n(\Omega)$ and $\varphi^{-1}\in L^1_n(\widetilde{\Omega})$ were considered in connections with the non-linear elasticity problems. Note that these classes were studied in \cite{OS65} as $BL$-mappings and coincide with  classes of weak $(n,1)$-quasiconformal mappings.

In \cite{GSU24}, it was proved that the class of $(p,Q)$-homeomorphisms coincides with the class of weak ($p',n-1$)-quasiconformal mappings, $p'=p/(p-n+1)$.

\medskip

In this review we give an introduction to the geometric theory of composition operators on Sobolev spaces. We consider analytical and capacitory properties of weak $(p,q)$-quasiconformal mappings due to their important applications to the elliptic operators theory. The geometric characterizations of weak $(p,q)$-quasiconformal mappings can be found in \cite{GGR95,U24,VU98}. Some review on the theory of composition operators on Sobolev spaces can be found in \cite{V12}, where the author presents his view on the development of the theory of composition operators on Sobolev spaces.

\section{Sobolev Spaces, Capacity and Composition Operators}

\subsection{Sobolev spaces  and capacity}

Let $\Omega\subset\mathbb R^n$ be an open set. The Sobolev space with first weak derivatives $W^1_p(\Omega)$, $1\leq p\leq \infty$ is a space of locally integrable weakly differentiable functions
$f:\Omega\to\mathbb{R}$ equipped with the following norm: 
\[
\|f\mid W^1_p(\Omega)\|=\|f\mid L_p(\Omega)\|+\|\nabla f \mid L_p(\Omega)\|.
\]
It is well known that the Sobolev space $W^1_p(\Omega)$ is a Banach space and coincides with the closure of the space of smooth functions $C^{\infty}(\Omega)$ in the norm of $W^1_p(\Omega)$ \cite{M}. It implies that functions of the Sobolev space $W^1_p(\Omega)$ are defined (as elements of Banach space) up to a set of $p$-capacity zero, in accordance with convergence in the topology of  $W^1_p(\Omega)$. The Sobolev space $W^{1}_{p,\loc}(\Omega)$ is defined as a space of functions $f\in W^{1}_{p}(U)$ for every open and bounded set $U\subset  \Omega$ such that $\overline{U}  \subset \Omega$.

The homogeneous seminormed Sobolev space $L^1_p(\Omega)$, $1\leq p\leq \infty$, is defined as a space
of locally integrable weakly differentiable functions $f:\Omega\to\mathbb{R}$ equipped
with the following seminorm: 
\[
\|f\mid L^1_p(\Omega)\|=\|\nabla f \mid L_p(\Omega)\|.
\]
The Sobolev space $L^{1}_{p,\loc}(\Omega)$ is defined as a space of functions $f\in L^{1}_{p}(U)$ for every open and bounded set $U\subset  \Omega$ such that $\overline{U}  \subset \Omega$.

In the case of bounded Lipschitz domains $\Omega\subset\mathbb R^n$, $n\geq 2$, Sobolev spaces $W^1_p(\Omega)$ and $L^1_p(\Omega)$ coincide because the corresponding Sobolev-Poincar\'{e} inequalities
$$
\inf\limits_{c\in\mathbb R}\|f-c \mid L_p(\Omega)\|\leq B_{p,p}(\Omega)\|f\mid L^1_p(\Omega)\|
$$
hold, see, for example, \cite{M}. Thus, Sobolev spaces $W^1_{p,\loc}(\Omega)$ and $L^1_{p,\loc}(\Omega)$ coincide for any open set $\Omega\subset\mathbb R^n$.

In the general case coincidence of the Sobolev spaces $W^1_p(\Omega)$ and $L^1_p(\Omega)$ is the complicated open problem of the geometric analysis of Sobolev spaces.

As we noted, Sobolev spaces are Banach spaces of equivalence classes \cite{M} up to a set of corresponding capacity zero. This approach is based on the concept of so-called quasicontinuity which was introduced in 70th by V.~Havin and V.~Maz'ya  \cite{HM72}. This concept is essential for a correct understanding of the theory of composition operators, otherwise the composition operator should be considered on subspaces of  continuous Sobolev functions \cite{HK12,Kl12}, are not necessarily Banach spaces.
To clarify the notion of equivalence classes of Banach Sobolev spaces we need to use the non-linear $p$-capacity associated with corresponding Sobolev spaces.

Recall the notion of the $p$-capacity of a set $E\subset \Omega$ \cite{GResh,HKM,M}. 
Suppose $\Omega$ is an open set in $\mathbb R^n$ and  $F\subset\Omega$ is a compact set. The $p$-capacity of $F$ with respect to $\Omega$ is defined as
\begin{equation*}
\cp_p(F;\Omega) =\inf_f\{\|\nabla f|L_p(\Omega)\|^p\},
\end{equation*} 
where the infimum is taken over all functions $f\in C_0(\Omega)\cap L^1_p(\Omega)$ such that $f\geq 1$ on $F$; such functions are called admissible functions for the compact set $F\subset\Omega$. 
If 
$U\subset\Omega$ 
is an open set, we define
\begin{equation*}
\cp_{p}(U;\Omega)=\sup_F\{\cp_{p}
(F;\Omega)\,:\,F\subset U,\,\, F\,\,\text{is compact}\}.
\end{equation*}

In the case of an arbitrary set 
$E\subset\Omega$
we define the inner $p$-capacity 
\begin{equation*}
\underline{\cp}_{p}(E;\Omega)=\sup_F\{\cp_{p}(F;\Omega)\, :\,\,F\subset E\subset\Omega,\,\, F\,\,\text{is compact}\},
\end{equation*}
and the outer $p$-capacity 
\begin{equation*}
\overline{\cp}_{p}(E;\Omega)=\inf_U\{\cp_{p}(U;\Omega)\, :\,\,E\subset U\subset\Omega,\,\, U\,\,\text{is open}\}.
\end{equation*}

Then a set $E\subset\Omega$ is called $p$-capacity measurable if $\underline{\cp}_p(E;\Omega)=\overline{\cp}_p(E;\Omega)$. Let $E\subset\Omega$ be a $p$-capacity measurable set. The value
$$
\cp_p(E;\Omega)=\underline{\cp}_p(E;\Omega)=\overline{\cp}_p(E;\Omega)
$$
is called the $p$-capacity measure of the set $E\subset\Omega$. By \cite{Ch54}, analytical sets (in particular Borel sets) are $p$-capacity measurable sets.

The notion of $p$-capacity permits us to refine the notion of Sobolev functions. Let a function $f\in L^1_p(\Omega)$. Then the refined function 
$$
\tilde{f}(x)=\lim\limits_{r\to 0}\frac{1}{|B(x,r)|}\int\limits_{B(x,r)} f(y)~dy
$$
is defined quasieverywhere, i.~e. up to a set of $p$-capacity zero, and it is absolutely continuous on almost all lines \cite{M}. This refined function $\tilde{f}\in L^1_p(\Omega)$ is called a unique quasicontinuous representation ({\it a canonical representation}) of function $f\in L^1_p(\Omega)$. Recall that a function $\tilde{f}$ is termed quasicontinuous if for any $\varepsilon >0$ there is an open  set $U_{\varepsilon}$ such that the $p$-capacity of $U_{\varepsilon}$ is less than $\varepsilon$ and on the set $\Omega\setminus U_{\varepsilon}$ the function  $\tilde{f}$ is continuous (see, for example, \cite{HKM,M}). 
In what follows we will use the quasicontinuous (refined) functions only.

Note that the first weak derivatives of
the function $f$ coincide almost everywhere with the usual
partial derivatives (see, e.g., \cite{M} ).

\subsection{The change of variables in the Lebesgue integral}

Let us recall the formula of the change of variables in the Lebesgue integral \cite{Fe69, Ha93} in the required for us form.
Let $\Omega,\widetilde{\Omega}$ be domains in $\mathbb R^n$ and let a homeomorphism $\varphi : \Omega\to \widetilde{\Omega}$ be such that there exists a collection of closed sets $\{A_k\}_1^{\infty}$, $A_k\subset A_{k+1}\subset \Omega$ for which restrictions $\varphi \vert_{A_k}$ are Lipschitz mapping on the sets $A_k$ and 
$$
\biggl|\Omega\setminus\bigcup\limits_{k=1}^{\infty}A_k\biggr|=0.
$$
Then there exists a Borel set $S\subset \Omega$, $|S|=0$,  such that  the mapping $\varphi:\Omega\setminus S \to \widetilde{\Omega}\setminus\widetilde{S}$, $\widetilde{S}=\varphi(S)$, has the Luzin $N$-property and the change of variables formula
\begin{equation}
\label{chvf}
\int\limits_E f\circ\varphi (x) |J(x,\varphi)|~dx=\int\limits_{\varphi(E)\setminus\widetilde{S}} f(y)~dy
\end{equation}
holds for every measurable set $E\subset \Omega$ and for every nonnegative measurable function $f: \widetilde{\Omega}\to\mathbb R$. 
If a mapping $\varphi$ possesses the Luzin $N$-property (the image of a set of measure zero has measure zero), then $|\varphi (S)|=0$ and the second integral can be rewritten as the integral on $\varphi(E)$.

\subsection{Sobolev mappings}

Sufficient conditions on the change of variables formula can be formulated in the terms of Sobolev mappings.
Let $\Omega\subset\mathbb R^n$ be an open set. Then a mapping $\varphi:\Omega\to\mathbb R^n$ belongs to the Sobolev space $L^1_{p,\loc}(\Omega)$, 
$1\leq p\leq\infty$, if its coordinate functions $\varphi_j$ belong to $L^1_{p,\loc}(\Omega)$, $j=1,\dots,n$.

The Sobolev homeomorphisms of the class $W^1_{1,\loc}(\Omega)$ satisfy the conditions of the change of variable formula \cite{Ha93} and so for the Sobolev mappings the change of variable formula \eqref{chvf} holds. Note that homeomorphisms of the class $W^1_{n,\loc}(\Omega)$ have the Luzin $N$-property \cite{VGR79}.

In the case of Sobolev mappings the formal Jacobi matrix
$$
D\varphi(x)=\left(\frac{\partial \varphi_i}{\partial x_j}(x)\right),\,\,\, i,j=1,\dots,n,
$$
and its determinant (Jacobian) $J(x,\varphi)=\det D\varphi(x)$ are well defined at
almost all points $x\in \Omega$. The norm $|D\varphi(x)|$ of the matrix
$D\varphi(x)$ is the operator norm of the corresponding linear operator $D\varphi (x):\mathbb R^n \rightarrow \mathbb R^n$ defined by the matrix $D\varphi(x)$. Recall that a mapping $\varphi$ is called a mapping of finite distortion if $|D\varphi(x)|=0$ for almost all $x\in Z=\{x\in\Omega : J(x,\varphi)=0\}$ \cite{VGR79}.

\section{Composition operators on Sobolev spaces}

Let $\Omega$ and $\widetilde{\Omega}$ be domains in $\mathbb R^n$. Then a homeomorphism $\varphi:\Omega\to\widetilde{\Omega}$ induces a bounded composition
operator 
\[
\varphi^{\ast}:L^1_p(\widetilde{\Omega})\to L^1_q(\Omega),\,\,\,1\leq q\leq p\leq\infty,
\]
by the composition rule $\varphi^{\ast}(f)=f\circ\varphi$ if the composition $\varphi^{\ast}(f)\in L^1_q(\Omega)$
is defined quasi-everywhere (almost everywhere if $q=1$) in $\Omega$ and there exists a constant $K_{p,q}(\varphi;\Omega)<\infty$ such that 
\[
\|\varphi^{\ast}(f)\mid L^1_q(\Omega)\|\leq K_{p,q}(\varphi;\Omega)\|f\mid L^1_p(\widetilde{\Omega})\|
\]
for every function $f\in L^1_p(\widetilde{\Omega})$ \cite{VU04}.

Set functions that are defined on open subsets play a significant role in the theory of composition operators on Sobolev spaces.
Recall that a nonnegative mapping $\Phi$ defined on open subsets of $\Omega$ is called a monotone countably additive set function \cite{RR55,VU04} if

\noindent
1) $\Phi(U_1)\leq \Phi(U_2)$ if $U_1\subset U_2\subset\Omega$;

\noindent
2)  for any collection $U_i \subset U \subset \Omega$, $i=1,2,...$, of mutually disjoint open sets
$$
\sum_{i=1}^{\infty}\Phi(U_i) = \Phi\left(\bigcup_{i=1}^{\infty}U_i\right).
$$

The following lemma gives properties of monotone countably additive set functions defined on open subsets of $\Omega\subset \mathbb R^n$ \cite{RR55,VU04}.

\begin{lem}
\label{lem:AddFun}
Let $\Phi$ be a monotone countably additive set function defined on open subsets of the domain $\Omega\subset \mathbb R^n$. Then

\noindent
(a) at almost all points $x\in \Omega$ there exists a finite derivative
$$
\lim\limits_{r\to 0}\frac{\Phi(B(x,r))}{|B(x,r)|}=\Phi'(x);
$$

\noindent
(b) $\Phi'(x)$ is a measurable function;

\noindent
(c) for every open set $U\subset \Omega$ the inequality
$$
\int\limits_U\Phi'(x)~dx\leq \Phi(U)
$$
holds.
\end{lem}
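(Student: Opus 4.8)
This is the classical Radon–Nikodym / Lebesgue differentiation statement for set functions, and I would follow the standard route, adapting it to the fact that $\Phi$ is defined only on open sets. First I would extend $\Phi$ to a genuine Borel (outer) measure: for an arbitrary set $E\subset\Omega$ put $\mu(E)=\inf\{\Phi(U): E\subset U\subset\Omega,\ U\text{ open}\}$. Monotonicity of $\Phi$ makes $\mu$ monotone, $\mu(U)=\Phi(U)$ on open $U$, and countable additivity of $\Phi$ on disjoint open sets, together with a standard covering argument, gives countable subadditivity of $\mu$; one then checks the Carathéodory criterion so that Borel sets are $\mu$-measurable, yielding a Borel measure $\mu$ on $\Omega$ with $\mu(U)=\Phi(U)$ for all open $U$. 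Since $\mu(B(x,R))\le\Phi(\Omega)$ is finite on balls compactly contained in $\Omega$ (and in any case finite on relatively compact open subsets), $\mu$ is a locally finite Borel measure.

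Next, for part (a) I would invoke the Lebesgue–Besicovitch differentiation theorem for Radon measures: for a locally finite Borel measure $\mu$ on $\mathbb R^n$ the limit $\lim_{r\to0}\mu(B(x,r))/|B(x,r)|$ exists and is finite for (Lebesgue-)almost every $x$. Because $\Phi(B(x,r))=\mu(B(x,r))$, this is exactly the assertion that $\Phi'(x)$ exists a.e. and is finite; I would denote this limit $\Phi'(x)$. Part (b) is then immediate: $\Phi'$ is, off a null set, the pointwise limit of the functions $x\mapsto\Phi(B(x,r_k))/|B(x,r_k)|$ along a sequence $r_k\to0$, and each of these is measurable — indeed lower/upper semicontinuous considerations or simply the continuity properties of $r\mapsto\mu(B(x,r))$ from monotone convergence show measurability in $x$ — so $\Phi'$ is measurable.

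For part (c), the Radon–Nikodym theorem splits $\mu$ with respect to Lebesgue measure as $\mu=\mu_{ac}+\mu_s$ with $\mu_{ac}\ll|\cdot|$ and $\mu_s\perp|\cdot|$; the Besicovitch differentiation theorem identifies the density $d\mu_{ac}/d|\cdot|$ with $\Phi'$ almost everywhere. Hence for every open $U\subset\Omega$,
\[
\int_U\Phi'(x)\,dx=\mu_{ac}(U)\le\mu_{ac}(U)+\mu_s(U)=\mu(U)=\Phi(U),
\]
which is the desired inequality (with equality precisely when $\Phi$, restricted to $U$, is absolutely continuous). I expect the only genuinely delicate point to be the first step — verifying that the naive outer-measure extension of $\Phi$ from open sets is actually countably additive on Borel sets; once $\mu$ is in hand, parts (a)–(c) are direct citations of the Besicovitch differentiation and Radon–Nikodym theorems. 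An alternative that avoids building $\mu$ by hand is to note that $\Phi$ already determines a premeasure on the algebra generated by open sets and apply the Carathéodory extension theorem, but the covering argument sketched above is the most self-contained and is the route I would write up.
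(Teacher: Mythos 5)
The decisive step of your plan --- extending $\Phi$ to a Borel measure via $\mu(E)=\inf\{\Phi(U):E\subset U,\ U\ \text{open}\}$ --- is not merely delicate, it fails under the stated hypotheses. Condition 2) of the definition gives additivity only along families of \emph{mutually disjoint} open sets; it does not yield subadditivity $\Phi(U_1\cup U_2)\leq\Phi(U_1)+\Phi(U_2)$ for overlapping open sets, and without that your $\mu$ is not an outer measure and no Carath\'eodory-type construction can produce a Borel measure agreeing with $\Phi$ on open sets. A concrete counterexample: let $\Phi(U)=\sum_{C}|C|^2$, the sum taken over the connected components $C$ of the open set $U$, with $|C|$ the Lebesgue measure. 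This $\Phi$ is monotone (if $U_1\subset U_2$, the components $C_i$ of $U_1$ lying in one component $C'$ of $U_2$ satisfy $\sum_i|C_i|^2\leq\bigl(\sum_i|C_i|\bigr)^2\leq|C'|^2$) and countably additive on disjoint open families (the components of a disjoint union of open sets are exactly the components of the pieces), so it satisfies the hypotheses of the lemma; yet for two overlapping balls $U_1,U_2$ of unit volume whose union is connected one gets $\Phi(U_1\cup U_2)\approx 4>2=\Phi(U_1)+\Phi(U_2)$. Hence no Borel measure restricts to this $\Phi$, and the Besicovitch differentiation and Radon--Nikodym theorems cannot be invoked through a measure extension. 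Note that the conclusion of the lemma still holds for this $\Phi$ (here $\Phi'\equiv0$), so the class of set functions covered by the lemma is genuinely wider than restrictions of measures and the statement cannot be reduced to the measure case by your route.

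For comparison, the paper gives no proof at all: it quotes the result from \cite{RR55,VU04}, where it is established directly for set functions of open sets. The arguments there use only what the hypotheses actually provide, namely Vitali-type covering arguments carried out with pairwise disjoint balls. For instance, part (c) follows by selecting, for a.e.\ point of $U$ where the lower derivative exceeds a prescribed level, a disjoint family of balls $B_i\subset U$ with $\Phi(B_i)$ comparable to that level times $|B_i|$; then $\sum_i\Phi(B_i)=\Phi\bigl(\bigcup_i B_i\bigr)\leq\Phi(U)$ by disjoint additivity and monotonicity, which integrates up to $\int_U\Phi'\,dx\leq\Phi(U)$. Parts (a) and (b) come from the analogous upper- and lower-derivative estimates in the derivation theory of such set functions, not from the differentiation theorem for Radon measures. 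If you wish to keep your architecture, you must either add subadditivity over arbitrary (overlapping) open sets as an extra hypothesis --- which is not assumed in the lemma and would have to be verified separately for the set function of Theorem~\ref{thm:SobolevAddFun} --- or replace the first step by the direct covering argument.
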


In the next assertion we formulate the  "localization measure principle" \cite{U93,U04} for composition operators.

\begin{thm}
\label{thm:SobolevAddFun}
 Let a mapping $\varphi : \Omega\to \widetilde{\Omega}$, where $\Omega,\,\widetilde{\Omega}$ are domains of the Euclidean space $\mathbb R^n$,
generates a bounded composition operator
$$
\varphi^{\ast} : L^1_p(\widetilde{\Omega})\to L^1_q(\Omega),\quad 1\leq q<p\leq\infty.
$$
Then
$$
\Phi(\widetilde{A})=\sup\limits_{f\in{L}_{p}^{1}(\widetilde{\Omega})\cap C_0(\widetilde{A})}
\Biggl(
\frac{\bigl\|\varphi^{\ast} f\mid {L}_{q}^{1}(\Omega)\bigr\|}
{\bigl\|f\mid {L}_{p}^{1}(\widetilde{A})\bigr\|}
\Biggr)^{\kappa}
$$
is a bounded monotone countably additive function defined on
open bounded subsets $\widetilde{A}\subset \widetilde{\Omega}$. As usual $C_0(\widetilde{A})$ is the space of continuous functions on $\widetilde{A}$ with a compact support and the number $\kappa$ is defined according to the expression $1/\kappa = 1/q -1/p$.
\end{thm}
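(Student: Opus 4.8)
The plan is to verify the three defining properties of a bounded monotone countably additive set function in turn: finiteness, monotonicity, and countable additivity, the last being the only substantial point. \emph{Finiteness} is immediate from the operator bound: any admissible $f\in L^1_p(\widetilde{\Omega})\cap C_0(\widetilde{A})$ has $\nabla f$ supported in $\widetilde{A}$, so $\|f\mid L^1_p(\widetilde{A})\|=\|f\mid L^1_p(\widetilde{\Omega})\|$, and $\|\varphi^{\ast}f\mid L^1_q(\Omega)\|\le K_{p,q}(\varphi;\Omega)\|f\mid L^1_p(\widetilde{\Omega})\|$ forces each ratio in the supremum to be at most $K_{p,q}(\varphi;\Omega)$; hence $\Phi(\widetilde{A})\le K_{p,q}(\varphi;\Omega)^{\kappa}<\infty$ (the supremum is understood over $f$ with $\|f\mid L^1_p(\widetilde{A})\|\neq0$, which for a bounded open $\widetilde{A}$ discards only $f\equiv0$). \emph{Monotonicity} follows from $C_0(\widetilde{A}_1)\subset C_0(\widetilde{A}_2)$ whenever $\widetilde{A}_1\subset\widetilde{A}_2$: the supremum defining $\Phi(\widetilde{A}_1)$ runs over a subfamily of that defining $\Phi(\widetilde{A}_2)$.

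For countable additivity the first thing I would isolate is the \emph{disjoint support} mechanism. Let $\{U_i\}$ be pairwise disjoint open subsets with $\widetilde{A}:=\bigcup_i U_i\subset\widetilde{\Omega}$ bounded, and let $f_i\in L^1_p(\widetilde{\Omega})\cap C_0(U_i)$. Then $\nabla f_i$ is supported in $U_i$, and since $\varphi$ is a homeomorphism, $\varphi^{\ast}f_i=f_i\circ\varphi$ is supported in $\varphi^{-1}(\supp f_i)\subset\varphi^{-1}(U_i)$, where the sets $\varphi^{-1}(U_i)$ are again pairwise disjoint and open. Consequently, for any finite combination $f=\sum_{i=1}^{N}t_i f_i$ (which again lies in $L^1_p(\widetilde{\Omega})\cap C_0(\widetilde{A})$) the two relevant norms split as $\ell^{p}$- and $\ell^{q}$-sums:
\[
\|f\mid L^1_p(\widetilde{A})\|=\Big(\textstyle\sum_i t_i^{p}\|\nabla f_i\mid L_p\|^{p}\Big)^{1/p},\qquad
\|\varphi^{\ast}f\mid L^1_q(\Omega)\|=\Big(\textstyle\sum_i t_i^{q}\|\nabla(\varphi^{\ast}f_i)\mid L_q\|^{q}\Big)^{1/q},
\]
with the usual reading as suprema when $p=\infty$ (so $\kappa=q$). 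Writing $\alpha_i=\|\varphi^{\ast}f_i\mid L^1_q(\Omega)\|$ and $\beta_i=\|f_i\mid L^1_p(U_i)\|$, the problem reduces to the elementary identity
\[
\sup_{t_i>0}\ \frac{\big(\sum_i t_i^{q}\alpha_i^{q}\big)^{1/q}}{\big(\sum_i t_i^{p}\beta_i^{p}\big)^{1/p}}=\Big(\textstyle\sum_i(\alpha_i/\beta_i)^{\kappa}\Big)^{1/\kappa},
\]
which I would obtain by applying Hölder's inequality with exponents $p/q$ and $p/(p-q)$ to $\sum_i (t_i^{q}\beta_i^{q})(\alpha_i/\beta_i)^{q}$, using $\kappa=pq/(p-q)$, and observing that equality is attained when $t_i^{p}\beta_i^{p}$ is proportional to $(\alpha_i/\beta_i)^{\kappa}$ (for $p=\infty$ one takes $t_i\beta_i$ constant).

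Granting this identity, the equality $\Phi(\widetilde{A})=\sum_i\Phi(U_i)$ splits into two halves. For ``$\ge$'': given $\varepsilon>0$, choose for $i=1,\dots,N$ an admissible $f_i\in C_0(U_i)$ with $(\alpha_i/\beta_i)^{\kappa}>\Phi(U_i)-\varepsilon2^{-i}$, form $f=\sum_{i=1}^{N}t_i f_i$ with $t_i$ the extremal weights above, and raise the displayed identity to the power $\kappa$; then $f$ realizes the value $\sum_{i=1}^{N}(\alpha_i/\beta_i)^{\kappa}>\sum_{i=1}^{N}\Phi(U_i)-\varepsilon$ in the supremum defining $\Phi(\widetilde{A})$, and letting $N\to\infty$, $\varepsilon\to0$ finishes this half. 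For ``$\le$'': any admissible $f\in C_0(\widetilde{A})$ has $\supp f$ a compact subset of $\bigcup_i U_i$, hence covered by finitely many and so meeting only finitely many of the disjoint $U_i$; thus $f=\sum_i f\mathbf{1}_{U_i}$ is a finite decomposition with $f\mathbf{1}_{U_i}\in C_0(U_i)\cap L^1_p$, and the case $t_i\equiv1$ of the elementary inequality together with $\alpha_i/\beta_i\le\Phi(U_i)^{1/\kappa}$ shows that the $\kappa$-th power of the corresponding ratio is at most $\sum_i\Phi(U_i)$; taking the supremum over $f$ gives the claim.

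I expect the main obstacle to lie in two places where one must not be cavalier. First, justifying rigorously that $\varphi^{\ast}f_i=f_i\circ\varphi$ is a genuine element of $L^1_q(\Omega)$ whose weak gradient is, almost everywhere, supported in $\varphi^{-1}(U_i)$: this is exactly where the standing hypothesis that $\varphi^{\ast}$ is a \emph{bounded} composition operator, together with the quasicontinuous (canonical) representatives introduced in Section~2, is essential, and where a naive argument would fail (compare the errors in the literature discussed in the Introduction). Second, carrying the limiting exponent $p=\infty$ through uniformly, where the $\ell^{p}$-sums above must be read as suprema and the extremal weights $t_i$ are chosen so that $t_i\beta_i$ is constant. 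The remaining steps are bookkeeping of disjoint supports, the one-variable optimization, and the elementary compactness observation that a compact subset of a disjoint union of open sets meets only finitely many of them.
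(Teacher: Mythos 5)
Your proposal is correct and follows essentially the same route as the paper's proof: disjointness of the supports of the $\nabla f_i$ and of $\nabla(\varphi^{\ast}f_i)$ (via the disjoint preimages $\varphi^{-1}(U_i)$), the H\"older inequality in its equality case with exponents $p/q$ and $p/(p-q)$, and near-extremal test functions summed over finitely many of the disjoint sets; your optimization over the weights $t_i$ is just a repackaging of the paper's normalization $\|f_i\mid L^1_p(\widetilde{A}_i)\|^p=\Phi(\widetilde{A}_i)(1-\varepsilon/2^i)$. The only difference is that you also write out the finiteness bound, the reverse (subadditivity) inequality which the paper dismisses as straightforward, and the $p=\infty$ reading of the norms, all of which are consistent with the paper's argument.
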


\begin{proof}
If $\widetilde{A}_1\subset \widetilde{A}_2$ are bounded open  subsets in $\widetilde{\Omega}$, then, extending
the functions of the space $C_0(\widetilde{A}_1)$ by zero on the set $\widetilde{A}_2$, we have the inclusion
$$
\big(L_{p}^{1}(\widetilde{A}_1)\cap C_0(\widetilde{A}_1)\big)\subset
\big(L_{p}^{1}(\widetilde{A}_2)\cap C_0(\widetilde{A}_2)\big).
$$
Hence we obtain the monotonicity of the set function $\Phi$:
\begin{multline}
\Phi(\widetilde{A}_1)=\sup\limits_{f\in
L_{p}^{1}(\widetilde{A}_1)\cap C_0(\widetilde{A}_1)} \Biggl(
\frac{\bigl\|\varphi^{\ast} f\mid {L}_{q}^{1}(\Omega)\bigr\|}
{\bigl\|f\mid
L_{p}^{1}(\widetilde{A}_1)\bigr\|}
\Biggr)^{\kappa}\\
\leq\sup\limits_{f\in
L_{p}^{1}(\widetilde{A}_2)\cap C_0(\widetilde{A}_2)} \Biggl(
\frac{\bigl\|\varphi^{\ast} f\mid {L}_{q}^{1}(\Omega)\bigr\|}
{\bigl\|f\mid
L_{p}^{1}(\widetilde{A}_2)\bigr\|}
\Biggr)^{\kappa}=\Phi(\widetilde{A}_2).
\nonumber
\end{multline}

Now, let $\widetilde{A}_{i}$, $i=1,2,...$, be open disjoint subsets of $\widetilde{\Omega}$ and denote 
$\widetilde{A}_0=\bigcup\limits_{i=1}^{\infty}\widetilde{A}_i$.
For $i=1,2,...$ we consider functions $f_i\in L_{p}^{1}(\widetilde{A}_i)\cap C_0(\widetilde{A}_i)$ which simultaneously satisfy the following conditions
$$
\bigl\|\varphi^{\ast} f_i\mid{L}_{q}^{1}
(\Omega)\bigr\|\geq \bigl(\Phi(\widetilde{A}_i)\bigl(1-{\frac{\varepsilon}{2^i}}\bigr)\bigr)^{\frac{1}{\kappa}}\bigl\|f_i\mid L_{p}^{1}(\widetilde{A}_i)\bigr\|
$$
and
$$
\bigl\|f_i\mid L_{p}^{1}(\widetilde{A}_i)
\bigr\|^p=\Phi(\widetilde{A}_i)\bigl(1-{\frac{\varepsilon}{2^i}}\bigr),\,\,i=1,2,...,
$$
where $\varepsilon\in(0,1)$ is a fixed number.
Letting $g_N=\sum\limits_{i=1}^{N}f_i$ and applying the H\"older inequality (the case of equality), we obtain
\begin{multline}
\bigl\|\varphi^{\ast} g_N\mid{L}_{q}^{1}
(\Omega)\bigr\| \\ \geq
\biggl(\sum\limits_{i=1}^{N}
\left(\Phi(\widetilde{A}_i)\left(1-{\frac{\varepsilon}{2^i}}\right)\right)^{\frac{q}{\kappa}}
\bigl\|f_i\mid L_{p}^{1}(\widetilde{A}_i)
\bigr\|^q\biggr)^{1/q}\\
=
\biggl(\sum\limits_{i=1}^{N}\Phi(\widetilde{A}_i)
\left(1-{\frac{\varepsilon}{2^i}}\right)\biggr)^{\frac{1}{\kappa}}
\biggl\|g_N\mid L_{p}^{1}
\Bigl(\bigcup\limits_{i=1}^{N}\widetilde{A}_i\Bigr)\biggr\|
\\
\geq
\biggl(\sum\limits_{i=1}^{N}\Phi(\widetilde{A}_i)
-\varepsilon\Phi(\widetilde{A}_0) \biggr)^{\frac{1}{\kappa}}
\biggl\|g_N\mid L_{p}^{1}
\Bigl(\bigcup\limits_{i=1}^{N}\widetilde{A}_i\Bigr)\biggr\|
\nonumber
\end{multline}
since the sets, on which the gradients $\nabla\varphi^{\ast} f_i$
do not vanish, are disjoint. From here it follows that
$$
\Phi(\widetilde{A}_0)^{\frac{1}{\kappa}}\geq\sup\frac
{\bigl\|\varphi^{\ast} g_N\mid{L}_{q}^{1} (\Omega)\bigr\|}
{\biggl\|g_N\mid L_{p}^{1}
\Bigl(\bigcup\limits_{i=1}^{N}\widetilde{A}_i\Bigr)\biggr\|}\geq
\biggl(\sum\limits_{i=1}^{N}\Phi(\widetilde{A}_i)-\varepsilon\Phi(\widetilde{A}_0)
\biggr)^{\frac{1}{\kappa}},
$$
where the least upper bound is taken over all the
above-mentioned functions $g_N\in L_{p}^{1}
\Bigl(\bigcup\limits_{i=1}^{N}\widetilde{A}_i\Bigr)\cap C_0\Bigl(\bigcup\limits_{i=1}^{N}\widetilde{A}_i\Bigr)$. Since both
$N$ and $\varepsilon$ are arbitrary, we have
$$
\sum\limits_{i=1}^{\infty}\Phi(\widetilde{A}_i)
\leq \Phi\Bigl(\bigcup\limits_{i=1}^{\infty}\widetilde{A}_i\Bigr).
$$
The validity of the inverse inequality can be proved in a straightforward manner.
\end{proof}

\subsection{Capacitory properties of composition operators}

Let us start with the Luzin type properties of capacities for mappings which generate bounded composition operators on Sobolev spaces.

\begin{thm}
\label{theorem:CapacityDescPP}
Let a homeomorphism $\varphi :\Omega\to \widetilde{\Omega}$
generate a bounded composition operator
$$
\varphi^{\ast}: L^1_p(\widetilde{\Omega})\to L^1_p(\Omega),\,\,1\leq p<\infty.
$$
Then the inequality
$$
\cp_{p}^{1/p}(\varphi^{-1}(\widetilde{E});\Omega)
\leq K_{p,p}(\varphi;\Omega)\cp_{p}^{1/p}(\widetilde{E};\widetilde{\Omega})
$$
holds for every Borel set $\widetilde{E}\subset\widetilde{\Omega}$. 
\end{thm}

\begin{proof}

Let $F\subset E=\varphi^{-1}(\widetilde{E})$ be a compact set. Because $\varphi$ is a homeomorphism, $\widetilde{F}=\varphi(F)\subset\widetilde{E}$ is also a compact set.  Let $f\in C_0(\widetilde{\Omega})\cap L^1_p(\widetilde{\Omega)}$ be an arbitrary function such that $f\geq 1$ on $\widetilde{F}$. Then the composition $g=\varphi^{\ast}(f)$ belongs to $C_0(\Omega)\cap L^1_p({\Omega)}$, $g\geq 1$ on $F$ and 
$$
\|\varphi^{\ast}(f)\mid L^1_p(\Omega)\|\leq K_{p,p}(\varphi;\Omega)\|f \mid L^1_p(\widetilde{\Omega})\|.
$$
Since the function $g=\varphi^{\ast}(f)\in C_0(\Omega)\cap L^1_p({\Omega)}$ is an admissible function for the compact $F\subset E$, we have
$$
\cp_{p}^{1/p}(\varphi^{-1}(\widetilde{F});\Omega)\leq \|\varphi^{\ast}(f)\mid L^1_p(\Omega)\|\leq K_{p,p}(\varphi;\Omega) \|f \mid L^1_p(\widetilde{\Omega})\|.
$$
Taking infimum over all functions $f\in C_0(\widetilde{\Omega})\cap L^1_p(\widetilde{\Omega)}$ such that $f\geq 1$ on $\widetilde{F}$ we have 
$$
\cp_{p}^{1/p}(\varphi^{-1}(\widetilde{F});\Omega)
\leq K_{p,p}(\varphi;\Omega)\cp_{p}^{1/p}(\widetilde{F};\widetilde{\Omega})
$$
for any compact set $\widetilde{F}\subset \widetilde{E}\subset \widetilde{\Omega}$.
Now for the Borel set $\widetilde{E}\subset \widetilde{\Omega}$ we have (by the definition of the $p$-capacity of Borel sets)
$$
\cp_{p}^{1/p}(\widetilde{F};\widetilde{\Omega})\leq \underline{\cp}_{p}^{1/p}(\widetilde{E};\widetilde{\Omega})=\cp_{p}^{1/p}(\widetilde{E};\widetilde{\Omega}).
$$
Hence 
$$
\cp_{p}^{1/p}(\varphi^{-1}(\widetilde{F});\Omega)\leq K_{p,p}(\varphi;\Omega)\cp_{p}^{1/p}(\widetilde{E};\widetilde{\Omega}).
$$
Since $F=\varphi^{-1}(\widetilde{F})$ is an arbitrary compact set, $F\subset E$, $E$ is a Borel set as a preimage of the Borel set $\widetilde{E}$ under the homeomorphism $\varphi$, then
\begin{multline*}
\cp_{p}^{1/p}(\varphi^{-1}(\widetilde{E});\Omega)=\underline{\cp}_{p}^{1/p}(\varphi^{-1}(\widetilde{E});\Omega)=
\sup\limits_{F\subset E}\cp_{p}^{1/p}(F;\Omega)\\
\leq K_{p,p}(\varphi;\Omega)\cp_{p}^{1/p}(\widetilde{E};\widetilde{\Omega}).
\end{multline*}
\end{proof}

\begin{thm}
\label{theorem:CapacityDescPQ}
Let a homeomorphism $\varphi :\Omega\to \widetilde{\Omega}$
generate a bounded composition operator
$$
\varphi^{\ast}: L^1_p(\widetilde{\Omega})\to L^1_q(\Omega),\,\,1\leq q< p<\infty.
$$
Then the inequality
$$
\cp_{q}^{1/q}(\varphi^{-1}(\widetilde{E});\Omega)
\leq K_{p,q}(\varphi;\Omega)\cp_{p}^{1/p}(\widetilde{E};\widetilde{\Omega})
$$
holds for every Borel set $\widetilde{E}\subset\widetilde{\Omega}$. 
\end{thm}

\begin{proof}

Let $F\subset E=\varphi^{-1}(\widetilde{E})$ be a compact set. Because $\varphi$ is a homeomorphism, $\widetilde{F}=\varphi(F)\subset\widetilde{E}$ is also a compact set.  Let $f\in C_0(\widetilde{\Omega})\cap L^1_p(\widetilde{\Omega)}$ be an arbitrary function such that $f\geq 1$ on $\widetilde{F}$. Then the composition $g=\varphi^{\ast}(f)$ belongs to $C_0(\Omega)\cap L^1_q({\Omega)}$, $g\geq 1$ on $F$ and 
$$
\|\varphi^{\ast}(f)\mid L^1_q(\Omega)\|\leq K_{p,q}(\varphi;\Omega) \|f \mid L^1_p(\widetilde{\Omega})\|.
$$
Since the function $g=\varphi^{\ast}(f)\in C_0(\Omega)\cap L^1_q({\Omega)}$ is an admissible function for the compact $F\subset E$, then
$$
\cp_{q}^{1/q}(\varphi^{-1}(\widetilde{F});\Omega)\leq \|\varphi^{\ast}(f)\mid L^1_q(\Omega)\|\leq K_{p,q}(\varphi;\Omega) \|f \mid L^1_p(\widetilde{\Omega})\|.
$$
Taking infimum over all functions $f\in C_0(\widetilde{\Omega})\cap L^1_p(\widetilde{\Omega)}$ such that $f\geq 1$ on $\widetilde{F}$ we have 
$$
\cp_{q}^{1/q}(\varphi^{-1}(\widetilde{F});\Omega)
\leq K_{p,q}(\varphi;\Omega)\cp_{p}^{1/p}(\widetilde{F};\widetilde{\Omega})
$$
for any compact set $\widetilde{F}\subset \widetilde{E}\subset \widetilde{\Omega}$.

Now for the Borel set $\widetilde{E}\subset \widetilde{\Omega}$ we have (by the definition of the $p$-capacity of Borel sets)
$$
\cp_{p}^{1/p}(\widetilde{F};\widetilde{\Omega})\leq \underline{\cp}_{p}^{1/p}(\widetilde{E};\widetilde{\Omega})=\cp_{p}^{1/p}(\widetilde{E};\widetilde{\Omega}).
$$
Hence 
$$
\cp_{q}^{1/q}(\varphi^{-1}(\widetilde{F});\Omega)\leq K_{p,q}(\varphi;\Omega)\cp_{p}^{1/p}(\widetilde{E};\widetilde{\Omega}).
$$
Since $F=\varphi^{-1}(\widetilde{F})$ is an arbitrary compact set, $F\subset E$, $E$ is a Borel set as a preimage of the Borel set $\widetilde{E}$ under the homeomorphism $\varphi$, then
\begin{multline*}
\cp_{q}^{1/q}(\varphi^{-1}(\widetilde{E});\Omega)=\underline{\cp}_{q}^{1/q}(\varphi^{-1}(\widetilde{E});\Omega)=
\sup\limits_{F\subset E}\cp_{q}^{1/q}(F;\Omega)\\
\leq K_{p,q}(\varphi;\Omega)\cp_{p}^{1/p}(\widetilde{E};\widetilde{\Omega}).
\end{multline*}

\end{proof}

\begin{cor}
\label{cor:Capacity}
Let a homeomorphism $\varphi :\Omega\to \widetilde{\Omega}$
generate a bounded composition operator
$$
\varphi^{\ast}: L^1_p(\widetilde{\Omega})\to L^1_q(\Omega), \,\,1<q\leq p<\infty.
$$
Then the preimage of a set of $p$-capacity zero has $q$-capacity zero.
\end{cor}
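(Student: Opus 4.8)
The plan is to reduce the assertion to the capacitary inequality already established in Theorem~\ref{theorem:CapacityDescPP} (in the case $p=q$) and in Theorem~\ref{theorem:CapacityDescPQ} (in the case $q<p$), and to overcome the fact that those statements are restricted to Borel sets --- whereas a set of $p$-capacity zero need not be Borel --- by trapping such a set between an open (hence Borel) superset of arbitrarily small $p$-capacity. The crucial point that makes this work is that $\varphi$ is a homeomorphism, so $\varphi^{-1}$ carries open sets to open sets.

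In detail, I would argue as follows. Let $\widetilde{E}\subset\widetilde{\Omega}$ be a set of $p$-capacity zero, which in particular means $\overline{\cp}_p(\widetilde{E};\widetilde{\Omega})=0$. Fix $\varepsilon>0$; by the definition of the outer $p$-capacity there is an open set $\widetilde{U}$ with $\widetilde{E}\subset\widetilde{U}\subset\widetilde{\Omega}$ and $\cp_p(\widetilde{U};\widetilde{\Omega})<\varepsilon$. Put $U=\varphi^{-1}(\widetilde{U})$; then $U$ is open, $U\subset\Omega$, and $\varphi^{-1}(\widetilde{E})\subset U$. Since $\widetilde{U}$ is open and therefore Borel, Theorem~\ref{theorem:CapacityDescPP} (if $p=q$) or Theorem~\ref{theorem:CapacityDescPQ} (if $q<p$) applies to $\widetilde{U}$ and gives
$$
\cp_{q}^{1/q}(U;\Omega)\leq K_{p,q}(\varphi;\Omega)\,\cp_{p}^{1/p}(\widetilde{U};\widetilde{\Omega})<K_{p,q}(\varphi;\Omega)\,\varepsilon^{1/p}.
$$
Using monotonicity of the outer $q$-capacity together with the fact that for the open set $U$ one has $\overline{\cp}_q(U;\Omega)=\cp_q(U;\Omega)$, we obtain
$$
\overline{\cp}_{q}(\varphi^{-1}(\widetilde{E});\Omega)\leq \cp_{q}(U;\Omega)\leq\bigl(K_{p,q}(\varphi;\Omega)\bigr)^{q}\varepsilon^{q/p}.
$$
Letting $\varepsilon\to 0$ yields $\overline{\cp}_{q}(\varphi^{-1}(\widetilde{E});\Omega)=0$, i.e. $\varphi^{-1}(\widetilde{E})$ has $q$-capacity zero.

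The only delicate spot --- and the reason the proof is routed through the auxiliary open sets rather than being a one-line application of the preceding theorems --- is the mismatch between the class of sets to which the capacitary estimates are guaranteed (Borel sets) and the class of sets one is actually handed (merely $p$-capacity measurable, in fact $p$-capacity null). Everything else is bookkeeping: monotonicity of $\overline{\cp}_q$, the identification of $\cp_q(U;\Omega)$ with $\overline{\cp}_q(U;\Omega)$ for open $U$, and openness of $\varphi^{-1}(\widetilde{U})$; no regularity of $\varphi$ beyond being a homeomorphism generating the bounded operator is needed.
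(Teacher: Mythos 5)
Your proof is correct and follows essentially the same route as the paper, which simply states that the corollary follows from Theorem~\ref{theorem:CapacityDescPP} and Theorem~\ref{theorem:CapacityDescPQ} without further detail. Your approximation of the $p$-capacity null set by open supersets of small capacity (to which the theorems apply, open sets being Borel) is exactly the routine step the paper leaves implicit.
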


\begin{proof}
The proof follows from Theorem~\ref{theorem:CapacityDescPP} and Theorem~\ref{theorem:CapacityDescPQ}.

\end{proof}

The characterization of mappings which generate bounded composition operators on Sobolev spaces in capacitary terms is given in the terms of the variational $p$-capacity \cite{GResh}. The condenser in the domain $\Omega\subset \mathbb R^n$ is the pair $(F_0,F_1)$ of connected sets $F_0, F_1 \subset \Omega$ that are closed relative to $\Omega$. A continuous function $u\in L_p^1(\Omega)$ is called an admissible function for the condenser $(F_0,F_1)$ if the set $F_i\cap \Omega$ is contained in some connected component of the set $\operatorname{Int}\{x\vert u(x)=i\}$,\ $i=0,1$. We say that the $p$-capacity of the condenser $(F_0, F_1)$ relative to the domain $\Omega$ is the value
$$
{{\cp}}_p(F_0,F_1;\Omega)=\inf\|u\vert L_p^1(\Omega)\|^p,
$$
where the infimum is taken over all functions admissible for the condenser $(F_0, F_1) \subset \Omega$. If the condenser has no admissible functions, we set the capacity to equal infinity. 

Let ${{\cp}}_p(F_0,F_1;\Omega)<\infty$. Suppose that a function $v$ belongs to the Sobolev space $L_p^1(\Omega)$ is admissible for the condenser $(F_0,F_1)$. Then $v$ is called {\it an extremal function for the condenser $(F_0,F_1)$} \cite{VG77}, if
$$
\int\limits_{\Omega\backslash(F_0\cup F_1)}|\nabla v|^p\,dx=
{{\cp}}_p(F_0,F_1;\Omega).
$$
Note that for any $1<p<\infty$ and any condenser $(F_0,F_1)$ with ${{\cp}}_p(F_0,F_1;\Omega)<\infty$ the extremal function exists and unique.

The set of extremal functions for $p$-capacity of every possible pairs of $n$-dimensional connected compact sets 
$F_0,\;F_1\subset \Omega$, having smooth boundaries, we denote by the symbol $E_p(\Omega)$.

\begin{thm}
\label{thm:ApproxExrem}
\cite{VG77}
Let $1<p<\infty$. Then there exists a countable collection of functions $v_i\in E_p(\Omega)$, $i\in \mathbb N$,
such that for every function $u\in L_{p}^{1}(\Omega)$ and for any $\varepsilon>0$ there exists a representation of $u$ in the form
$u=c_0+\sum\limits_{i=1}^{\infty}c_iv_i$,
for which the inequalities 
$$
\|u\mid L_{p}^{1}(\Omega)\|^p\leq \sum\limits_{i=1}^{\infty}
\|c_iv_i\mid L_{p}^{1}(\Omega)\|^p\leq
        \|u\mid L_{p}^{1}(\Omega)\|^p+\varepsilon
$$ 
hold.
\end{thm}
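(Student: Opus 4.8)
The plan is to reduce the statement to a dense subclass of $L^1_p(\Omega)$, to decompose a function of that subclass along its level sets into pieces whose gradients have pairwise disjoint supports, to recognise these pieces as scalar multiples of admissible functions for condensers with connected smooth plates, and finally to replace those admissible functions by genuine extremal functions drawn from one fixed countable family of condensers.

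\emph{Step 1 (reduction).} Since $C^\infty(\Omega)\cap L^1_p(\Omega)$ is dense in $L^1_p(\Omega)$ and the collection $\{v_i\}$ must be chosen independently of $u$, it suffices to produce a countable family $\{v_i\}\subset E_p(\Omega)$ such that for every smooth compactly supported $u$ and every $\delta>0$ one can find a finite linear combination $c_0+\sum c_iv_i$ with $\|u-c_0-\sum c_iv_i\mid L^1_p(\Omega)\|<\delta$, with the gradients of the terms that actually occur supported on pairwise essentially disjoint sets, and with $\bigl|\,\sum\|c_iv_i\mid L^1_p(\Omega)\|^p-\|u\mid L^1_p(\Omega)\|^p\,\bigr|<\delta$. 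Applying this to $u$, then to the (again smooth, compactly supported, small in seminorm) remainder, and iterating with $\delta$ replaced by a rapidly decreasing sequence, one assembles the required representation $u=c_0+\sum c_iv_i$ convergent in $L^1_p(\Omega)$, and the exact two-sided bound with $\varepsilon$ follows by summing the geometric errors. By Sard's theorem we may also assume that almost every value of $u$ is regular, so that for such $t$ the level set $\{u=t\}$ is a smooth compact hypersurface with finitely many components.

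\emph{Step 2 (level-set decomposition).} Fix regular values $\min u=t_0<t_1<\dots<t_N=\max u$ of small mesh $h$ and put $w_k=\min\{(u-t_k)_+,\,t_{k+1}-t_k\}$, so that $u-t_0=\sum_k w_k$. The gradient $\nabla w_k$ vanishes outside the open slab $\Omega_k=\{t_k<u<t_{k+1}\}$, the $\Omega_k$ are pairwise disjoint, and their union exhausts $\Omega$ up to the level sets $\{u=t_k\}$ (which have measure zero); hence $\|u\mid L^1_p(\Omega)\|^p=\sum_k\|w_k\mid L^1_p(\Omega)\|^p$ exactly. On $\Omega_k$ the function $(t_{k+1}-t_k)^{-1}w_k$ is admissible for the condenser $(\overline{\{u\le t_k\}},\overline{\{u\ge t_{k+1}\}})$, whose plates are closed in $\Omega$ with smooth boundaries; splitting the plates into connected components produces finitely many condensers $(F_0^{k,\ell},F_1^{k,\ell})$ with connected smooth plates and finite $p$-capacity, for each of which the corresponding restriction of $(t_{k+1}-t_k)^{-1}w_k$ is admissible.

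\emph{Step 3 (the universal family, replacement and bookkeeping).} Let $\mathcal D=\{(F_0^m,F_1^m)\}_{m\in\mathbb N}$ be a countable family of condensers with connected smooth plates and finite $p$-capacity which is dense, in the Hausdorff metric on the plates, among all such condensers in $\Omega$ and along which the extremal function — unique for $1<p<\infty$ by the remark preceding the theorem — depends continuously in the $L^1_p$-seminorm; let $v_m$ be the extremal of $(F_0^m,F_1^m)$, and take $\{v_i\}$ to be this sequence. Given $u$ and $\delta$, choose for each slab condenser a member of $\mathcal D$ whose plates slightly enlarge $F_0^{k,\ell}$ and $F_1^{k,\ell}$, so that its extremal still has gradient supported inside $\Omega_k$ (keeping all selected gradient supports pairwise essentially disjoint), and whose extremal is $L^1_p$-close to the extremal of $(F_0^{k,\ell},F_1^{k,\ell})$; set the coefficients equal to $t_{k+1}-t_k$ and $c_0=t_0$. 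Then $\|u-c_0-\sum c_iv_i\mid L^1_p(\Omega)\|<\delta$ once the mesh is fine and the approximations tight, while disjointness of the gradient supports gives $\sum\|c_iv_i\mid L^1_p(\Omega)\|^p=\|c_0+\sum c_iv_i\mid L^1_p(\Omega)\|^p$, which differs from $\|u\mid L^1_p(\Omega)\|^p$ by at most $O(\delta)$ since the approximant converges to $u$ in $L^1_p(\Omega)$; arranging the residual overlaps produced in the iteration of Step 1 so that the accumulated discrepancy has the correct sign (using that the replaced terms minimise energy on their condensers) yields $\|u\mid L^1_p(\Omega)\|^p\le\sum\|c_iv_i\mid L^1_p(\Omega)\|^p\le\|u\mid L^1_p(\Omega)\|^p+\varepsilon$.

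\emph{Main obstacle.} The crux is the construction in Step 3 of a single countable family serving every $u$: this requires showing that the extremal function of a condenser depends continuously, in the $L^1_p$-seminorm, on Hausdorff perturbations of its plates, which rests on the uniqueness of the extremal for $1<p<\infty$ together with a compactness argument for minimising sequences under perturbation of the domain. Intertwined with it is the need to keep the gradient supports of the selected extremals essentially disjoint through both the replacement and the corrective iteration, since it is precisely the small residual overlap that must be controlled on the correct side in order to turn the exact energy identity, valid for disjoint supports, into the two-sided estimate with $\varepsilon$.
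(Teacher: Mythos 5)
Your scheme does not establish the left-hand inequality $\|u\mid L^1_p(\Omega)\|^p\le\sum_i\|c_iv_i\mid L^1_p(\Omega)\|^p$, and this is the decisive gap: that inequality is strictly stronger than the triangle inequality (since $\sum_i a_i^p\le(\sum_i a_i)^p$), so it does not come for free from convergence of the series, and your own mechanism pushes the sum to the wrong side. Indeed, in your Step 2 the truncations $w_k$ give exact additivity, $\|u\|^p=\sum_k\|w_k\|^p$; in Step 3 you replace each normalized piece by the extremal of a slightly enlarged condenser, and both operations can only decrease energy (the extremal minimizes it, and enlarging the plates decreases the capacity). Hence the first-generation approximant $\tilde u=c_0+\sum_k c_kv_k$ satisfies $\sum_k\|c_kv_k\|^p=\|\tilde u\|^p\le\|u\|^p$, i.e.\ you undershoot. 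The later generations of the iteration add only terms of order $\|u-\tilde u\|^p$, whereas the deficit $\|u\|^p-\|\tilde u\|^p$ is in general at least of that order (a Clarkson-type uniform-convexity estimate shows the remainder energy is controlled by the deficit, not the other way around), so the accumulated sum can stay strictly below $\|u\|^p$. Your parenthetical remark that the discrepancy has ``the correct sign (using that the replaced terms minimise energy on their condensers)'' is exactly backwards: minimality is what drives the sum below $\|u\|^p$, which is precisely what the theorem forbids. Some additional device (the paper itself gives no proof, quoting the statement from \cite{VG77}, so the argument must stand on its own) is needed to get the two-sided bracket.

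There are further unproved steps that cannot be waved through. The claim that $\|u-c_0-\sum c_iv_i\mid L^1_p(\Omega)\|<\delta$ ``once the mesh is fine'' requires showing that the slab extremal is $L^1_p$-close to the corresponding truncation $w_k$; this needs a coarea-type statement that the capacities of the thin slabs almost exhaust $\|u\|^p$, combined with uniform convexity to convert energy-closeness into seminorm-closeness, and none of this is in your text. Your reduction is to smooth \emph{compactly supported} $u$, but $C_0^\infty(\Omega)$ is not dense in the seminorm of $L^1_p(\Omega)$ (gradients of compactly supported functions have zero mean, so for example $u(x)=x_1$ on a ball cannot be approximated); and for a merely smooth $u$ on an arbitrary domain the plates $\overline{\{u\le t_k\}}$, $\overline{\{u\ge t_{k+1}\}}$ need not be compact, $n$-dimensional, nor have finitely many connected components, while membership in $E_p(\Omega)$ requires pairs of $n$-dimensional connected compacts with smooth boundaries. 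Finally, the continuity of the extremal function under Hausdorff perturbation of the plates, on which your universal countable family rests, is itself a substantial claim; you correctly flag it as the main obstacle, but flagging it does not discharge it. As written, the proposal does not prove the statement.
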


The following assertion was proved, but not formulated in  \cite{VG77}.

\begin{lem}
\label{lem:extremal} Let $1<q\leq p<\infty$. Suppose that $\varphi :\Omega\to \widetilde{\Omega}$ is a homeomorphism of domains $\Omega,\widetilde{\Omega}\subset\mathbb R^n$, such that for a pair of $n$-dimensional connected compact sets $\widetilde{F}_0,\widetilde{F}_1\subset \widetilde{\Omega}$ with smooth boundaries the inequality
$$
\left(\int\limits_{\Omega}|\nabla v|^q~dx\right)^{\frac{1}{q}}\leq C_{p,q}(\widetilde{F}_0,\widetilde{F}_1) \left(\int\limits_{\widetilde{\Omega}}|\nabla u|^p~dx\right)^{\frac{1}{p}}
$$
holds for extremal functions $v$ and $u$ of pairs $(\varphi^{-1}(\widetilde{F}_0),\varphi^{-1}(\widetilde{F}_1))$ and $(\widetilde{F}_0,\widetilde{F}_1)$ correspondly, where $C_{p,q}(\widetilde{F}_0,\widetilde{F}_1)$ depends on $\widetilde{F}_0,\widetilde{F}_1\subset\mathbb R^n$. Then
$$
\|\varphi^{\ast}(u)\mid L^1_q(\Omega)\|\leq C_{p,q}(\widetilde{F}_0,\widetilde{F}_1)\|u \mid L^1_p(\widetilde{\Omega})\|.
$$
\end{lem}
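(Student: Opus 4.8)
The plan is to recognise $\varphi^{\ast}(u)$ as a competitor in the extremal problem that defines the $q$-capacity of the condenser $(\varphi^{-1}(\widetilde{F}_0),\varphi^{-1}(\widetilde{F}_1))$ in $\Omega$, and then to read off the estimate from the capacitary hypothesis on $v$. First I would recall the structure of the extremal function $u$ of $(\widetilde{F}_0,\widetilde{F}_1)$: after the truncation $u\mapsto\min(1,\max(0,u))$, which preserves admissibility and does not increase the $p$-Dirichlet integral, the function $u$ is continuous on $\widetilde{\Omega}$, has values in $[0,1]$, equals $i$ on the connected component of $\operatorname{Int}\{x:u(x)=i\}$ that contains $\widetilde{F}_i$, is $p$-harmonic in $\widetilde{\Omega}\setminus(\widetilde{F}_0\cup\widetilde{F}_1)$, and satisfies $\|u\mid L^1_p(\widetilde{\Omega})\|^p=\cp_p(\widetilde{F}_0,\widetilde{F}_1;\widetilde{\Omega})$. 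By the co-area formula almost every level $t\in(0,1)$ is regular, and on each layer $\{s<u<t\}$ the $p$-harmonicity of $u$ makes the renormalisation $(u-s)/(t-s)$ the extremal function of the sub-condenser $(\{u\le s\},\{u\ge t\})$; the gradients of distinct layer pieces are supported on disjoint sets, so the $p$-Dirichlet integral of $u$ is additive over layer partitions.

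The core step is to verify that $\varphi^{\ast}(u)=u\circ\varphi$ is an admissible function for $(\varphi^{-1}(\widetilde{F}_0),\varphi^{-1}(\widetilde{F}_1))$ in $\Omega$. Since $u$ is continuous and $\varphi$ is a homeomorphism, $\varphi^{\ast}(u)$ is continuous — in particular the composition raises no measurability question — and has values in $[0,1]$; and because $\varphi^{-1}$ is a homeomorphism it maps the connected component of $\operatorname{Int}\{u=i\}$ containing $\widetilde{F}_i$ onto a connected open subset of $\operatorname{Int}\{\varphi^{\ast}(u)=i\}$ containing $\varphi^{-1}(\widetilde{F}_i)$, so the component requirement in the definition of an admissible function is met. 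What is not formal is the membership $\varphi^{\ast}(u)\in L^1_q(\Omega)$ together with the bound on its $q$-Dirichlet integral: one obtains both by composing the layer pieces of $u$ with $\varphi$ — whose gradients are again supported on pairwise disjoint subsets of $\Omega$ — estimating the contribution of each layer by the capacitary hypothesis, summing, and passing to the limit over finer partitions.

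This computation yields $\|\varphi^{\ast}(u)\mid L^1_q(\Omega)\|\le\|v\mid L^1_q(\Omega)\|$; conversely, since $\varphi^{\ast}(u)$ is admissible for $(\varphi^{-1}(\widetilde{F}_0),\varphi^{-1}(\widetilde{F}_1))$, the very definition of the extremal function $v$ gives $\|v\mid L^1_q(\Omega)\|\le\|\varphi^{\ast}(u)\mid L^1_q(\Omega)\|$; hence $\|\varphi^{\ast}(u)\mid L^1_q(\Omega)\|=\|v\mid L^1_q(\Omega)\|$, and combining this with the hypothesis $\|v\mid L^1_q(\Omega)\|\le K_{p,q}(\widetilde{F}_0,\widetilde{F}_1)\|u\mid L^1_p(\widetilde{\Omega})\|$ proves the assertion. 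The main obstacle is precisely the estimate $\|\varphi^{\ast}(u)\mid L^1_q(\Omega)\|\le\|v\mid L^1_q(\Omega)\|$ — equivalently, the statement that composition with the homeomorphism $\varphi$ carries the $p$-capacity potential of $(\widetilde{F}_0,\widetilde{F}_1)$ to the $q$-capacity potential of $(\varphi^{-1}(\widetilde{F}_0),\varphi^{-1}(\widetilde{F}_1))$, and that the layer-wise capacitary estimates become sharp in the limit of fine partitions; this is the point established inside the level-set arguments of \cite{VG77} and does not follow from the soft facts about admissible functions alone.
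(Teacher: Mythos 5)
Your plan hinges on the inequality $\|\varphi^{\ast}(u)\mid L^1_q(\Omega)\|\leq\|v\mid L^1_q(\Omega)\|$, and this is where the proposal breaks: that inequality is false in general. Since $\varphi^{\ast}(u)$ is admissible for $(\varphi^{-1}(\widetilde{F}_0),\varphi^{-1}(\widetilde{F}_1))$ and the extremal function is unique, your inequality (together with the reverse one you correctly note) would force $u\circ\varphi$ to \emph{be} the $q$-capacity potential of the preimage condenser. Already for $\varphi=\mathrm{id}$ and $q<p$ this fails: for a round ring the $p$-extremal and the $q$-extremal are different explicit radial functions, so $\|u\mid L^1_q\|>\cp_q^{1/q}(\widetilde{F}_0,\widetilde{F}_1;\widetilde{\Omega})=\|v\mid L^1_q\|$; and for $p=q=n$ the pullback of the extremal under a quasiconformal, non-conformal homeomorphism is admissible but in general not extremal. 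So no refinement of the layer computation can deliver your ``core step'', and the concluding chain $\|\varphi^{\ast}(u)\|=\|v\|\leq K_{p,q}(\widetilde{F}_0,\widetilde{F}_1)\|u\mid L^1_p(\widetilde{\Omega})\|$ collapses; deferring this point to \cite{VG77} does not help, because it is not what is proved there.

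The argument the paper alludes to (see the Remark after the lemma: monotonicity of capacity and lower semicontinuity of Sobolev norms, following \cite{VG77}) never compares $u\circ\varphi$ with $v$. One slices $u$ by levels $0=t_0<\dots<t_m=1$, uses that the renormalized truncations $u_i$ of $u$ are the $p$-extremals of the sub-condensers $(\{u\leq t_{i-1}\},\{u\geq t_i\})$ with pairwise disjointly supported gradients, and then replaces each $u_i\circ\varphi$ by the $q$-extremal $v_i$ of the corresponding preimage sub-condenser. The capacitary hypothesis is applied to \emph{these} sub-condensers (monotonicity of the capacity is what allows one to pass between the given smooth compacts and the non-smooth level sets), giving $\|v_i\mid L^1_q(\Omega)\|^q\leq K^q\bigl((t_i-t_{i-1})^{-p}\int_{\{t_{i-1}<u<t_i\}}|\nabla u|^p\,dx\bigr)^{q/p}$; the staircase functions $w_m=\sum_i(t_i-t_{i-1})v_i$ then have uniformly bounded $L^1_q$-seminorm and converge uniformly to $u\circ\varphi$, since both functions lie between $t_{i-1}$ and $t_i$ on $\varphi^{-1}(\{t_{i-1}\leq u\leq t_i\})$; lower semicontinuity of the seminorm yields $\|\varphi^{\ast}(u)\mid L^1_q(\Omega)\|\leq K_{p,q}(\widetilde{F}_0,\widetilde{F}_1)\|u\mid L^1_p(\widetilde{\Omega})\|$ directly, with no claim that $u\circ\varphi$ is extremal. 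Note also that using the hypothesis only through the single pair $(\widetilde{F}_0,\widetilde{F}_1)$, as you do, cannot suffice (the identity-map example with the optimal constant shows this); the level-set proof needs the capacity inequality for the sub-condensers generated by $u$, which is exactly how the lemma is invoked in Theorem~\ref{thm:CapacityDescPP} and Theorem~\ref{thm:CapacityDescPQ}, where such an inequality is available for every condenser.
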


\begin{rem}
The proof of this assertion in \cite{VG77} is based on the monotonicity of the capacity and the lower semi-continuity of Sobolev norms, see also  \cite{VT21}.
\end{rem}

\begin{rem}
The quantity $C_{p,q}(\widetilde{F}_0,\widetilde{F}_1)$ is the norm of the composition operator $\varphi^{\ast}$ restriction on the set of functions of $E_p(\widetilde{\Omega}))$ with the gradients $\nabla u$ vanishing on the set $\partial\widetilde{F}_0\cup\partial\widetilde{F}_1$.
\end{rem}

The next assertion gives the capacitory description of composition operators on Sobolev spaces $L^1_p(\widetilde{\Omega})$ and $L^1_p(\Omega)$. In the proof we follow \cite{U93}.

\begin{thm}
\label{thm:CapacityDescPP}
Let $\Omega$ and $\widetilde{\Omega}$ be domains in $\mathbb R^n$. Then a homeomorphism $\varphi :\Omega\to \widetilde{\Omega}$
generates a bounded composition operator
$$
\varphi^{\ast}: L^1_p(\widetilde{\Omega})\to L^1_p(\Omega),\,\,1<p<\infty,
$$
if and only if there exists a constant  $K_{p,p}(\varphi;\Omega)$ such that for every condenser 
$(F_0,F_1)\subset \Omega$
the inequality
$$
\cp_{p}^{1/p}(\varphi^{-1}(F_0),\varphi^{-1}(F_1);\Omega) \leq K_{p,p}(\varphi;\Omega)\cp_{p}^{1/p}(F_0,F_1;\widetilde{\Omega})
$$
holds 
\end{thm}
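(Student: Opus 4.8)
The plan is to prove the two implications separately, with the forward direction (boundedness of $\varphi^{\ast}$ implies the capacitary inequality) being the easy one and the reverse direction being the substantive part.

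For the \emph{necessity} of the capacitary estimate, suppose $\varphi^{\ast}: L^1_p(\widetilde{\Omega})\to L^1_p(\Omega)$ is bounded with norm $K_{p,p}(\varphi;\Omega)$. Fix a condenser $(F_0,F_1)\subset\Omega$; since $\varphi$ is a homeomorphism, $(\varphi(F_0),\varphi(F_1))$ is a condenser in $\widetilde{\Omega}$. Given any admissible function $u\in L^1_p(\widetilde{\Omega})$ (continuous, equal to $i$ on a component containing $\varphi(F_i)\cap\widetilde{\Omega}$), the composition $v=\varphi^{\ast}(u)=u\circ\varphi$ is continuous on $\Omega$, lies in $L^1_p(\Omega)$ by the boundedness hypothesis, and is admissible for $(F_0,F_1)$ because $\varphi$ carries the component of $\{x: u(x)=i\}$ containing $\varphi(F_i)$ homeomorphically onto a connected open set containing $F_i$. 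Hence
$$
\cp_p^{1/p}(\varphi^{-1}(\varphi(F_0)),\varphi^{-1}(\varphi(F_1));\Omega)\le \|v\mid L^1_p(\Omega)\|\le K_{p,p}(\varphi;\Omega)\|u\mid L^1_p(\widetilde{\Omega})\|,
$$
and taking the infimum over admissible $u$ gives the desired inequality (with $F_i=\varphi^{-1}(\widetilde{F}_i)$ after relabeling).

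For the \emph{sufficiency}, assume the capacitary inequality holds for all condensers. The goal is to produce the norm bound $\|\varphi^{\ast}(u)\mid L^1_q(\Omega)\|\le K\|u\mid L^1_p(\widetilde{\Omega})\|$ (here $q=p$) for all $u\in L^1_p(\widetilde{\Omega})$. The strategy is to reduce to extremal functions of condensers and then to use the orthogonal-type decomposition of Theorem~\ref{thm:ApproxExrem}. First, the capacitary inequality applied to a condenser with smooth compacts $(\widetilde{F}_0,\widetilde{F}_1)$ says precisely that the extremal function $v$ for $(\varphi^{-1}(\widetilde{F}_0),\varphi^{-1}(\widetilde{F}_1))$ and the extremal function $u$ for $(\widetilde{F}_0,\widetilde{F}_1)$ satisfy $\|\nabla v\mid L_p(\Omega)\|\le K_{p,p}(\varphi;\Omega)\|\nabla u\mid L_p(\widetilde{\Omega})\|$, since capacity equals the $L^1_p$-energy of its extremal. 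By Lemma~\ref{lem:extremal} this immediately upgrades to $\|\varphi^{\ast}(u)\mid L^1_p(\Omega)\|\le K_{p,p}(\varphi;\Omega)\|u\mid L^1_p(\widetilde{\Omega})\|$ for every $u\in E_p(\widetilde{\Omega})$, i.e. for all extremal functions of smooth condensers in $\widetilde{\Omega}$.

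It remains to pass from the class $E_p(\widetilde{\Omega})$ to arbitrary $u\in L^1_p(\widetilde{\Omega})$, and this is where Theorem~\ref{thm:ApproxExrem} does the work. Given $u\in L^1_p(\widetilde{\Omega})$ and $\varepsilon>0$, write $u=c_0+\sum_{i=1}^{\infty}c_iv_i$ with $v_i\in E_p(\widetilde{\Omega})$ and $\sum_i\|c_iv_i\mid L^1_p(\widetilde{\Omega})\|^p\le \|u\mid L^1_p(\widetilde{\Omega})\|^p+\varepsilon$. Apply $\varphi^{\ast}$ termwise (constants are killed by the gradient, so $\varphi^{\ast}(c_0)$ contributes nothing to the seminorm), use the triangle inequality in $L^1_p(\Omega)$ together with the bound on each $\varphi^{\ast}(v_i)$, and sum:
$$
\|\varphi^{\ast}(u)\mid L^1_p(\Omega)\|\le \sum_{i=1}^{\infty}|c_i|\,\|\varphi^{\ast}(v_i)\mid L^1_p(\Omega)\| \le K_{p,p}(\varphi;\Omega)\sum_{i=1}^{\infty}\|c_iv_i\mid L^1_p(\widetilde{\Omega})\|.
$$
Here one must be a little careful: the naive triangle inequality gives an $\ell^1$-sum, not the $\ell^p$-sum that Theorem~\ref{thm:ApproxExrem} controls. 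The correct route, as in \cite{U93}, is to exploit that the gradients $\nabla\varphi^{\ast}(v_i)$ of the composed extremals live on essentially disjoint sets (the extremal of a condenser is constant off the ``ring'' between the two plates, and these rings can be taken disjoint in the decomposition), so that $\|\varphi^{\ast}(u)\mid L^1_p(\Omega)\|^p\le \sum_i\|\varphi^{\ast}(c_iv_i)\mid L^1_p(\Omega)\|^p\le K_{p,p}(\varphi;\Omega)^p\sum_i\|c_iv_i\mid L^1_p(\widetilde{\Omega})\|^p\le K_{p,p}(\varphi;\Omega)^p(\|u\mid L^1_p(\widetilde{\Omega})\|^p+\varepsilon)$. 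Letting $\varepsilon\to 0$ finishes the proof, and also shows $\varphi^{\ast}(u)\in L^1_p(\Omega)$ with quasicontinuous representative, so the operator is well-defined and bounded. The main obstacle is precisely this disjointness/$\ell^p$-summation step: one has to verify that the decomposition in Theorem~\ref{thm:ApproxExrem} can be chosen so that the supports of the gradients of the $v_i$ (and hence of the $\varphi^{\ast}(v_i)$, since $\varphi$ is a homeomorphism) have controlled overlap, which is the content of how that theorem was actually proved in \cite{VG77}; everything else is routine.
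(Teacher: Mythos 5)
Your proposal follows essentially the same route as the paper's own proof: necessity by composing admissible functions for the condenser, and sufficiency by turning the condenser inequality into the extremal-function hypothesis of Lemma~\ref{lem:extremal} and then passing to general $u$ via the decomposition of Theorem~\ref{thm:ApproxExrem} with a termwise $\ell^p$-summation, which is exactly the argument of \cite{U93}/\cite{VG77} that the paper reproduces (the paper likewise relies on the structure of the decomposition for the identity $\|c_0+\sum_i c_i\varphi^{\ast}(v_i)\mid L^1_p(\Omega)\|^p=\sum_i c_i^p\|\varphi^{\ast}(v_i)\mid L^1_p(\Omega)\|^p$ without re-proving it). The only step the paper records that you leave implicit is the identification of the composed series $c_0+\sum_i c_i(v_i\circ\varphi)$ with $\varphi^{\ast}(u)$, obtained from the $p$-quasi-everywhere convergence of the original series together with the fact that preimages of sets of $p$-capacity zero have $p$-capacity zero (Corollary~\ref{cor:Capacity}, available here from the capacitary hypothesis).
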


\begin{proof}
{\it Necessity.}
Let $u$ be an admissible function for the condenser $(F_0,F_1)\subset\widetilde{\Omega}$.
Then $u\circ\varphi$ is an admissible function for the condenser
$(\varphi^{-1}(F_0),\varphi^{-1}(F_1))\subset\Omega$.
Since $\varphi$
generates a bounded composition operator
$$
\varphi^{\ast}: L^1_p(\widetilde{\Omega})\to L^1_p(\Omega)
$$
then, by the definition of the $p$-capacity, we have
$$
{\cp}_p^{1/p}(\varphi^{-1}(F_0),\varphi^{-1}(F_1);\Omega)\leq\|\varphi^{\ast}(u)\vert L_q^1(\Omega)\|\leq K_{p,p}(\varphi;\Omega)\|u\,|\,L_p^1(\widetilde{\Omega})\|.
$$
Since $u$ is an arbitrary admissible function, we obtain
$$
\cp_{p}^{1/p}(\varphi^{-1}(F_0),\varphi^{-1}(F_1);\Omega)
\leq K_{p,p}(\varphi;\Omega)\cp_{p}^{1/p}(F_0,F_1;\widetilde{\Omega}).
$$

{\it Sufficiency.}
Let a function $u\in L^1_p(\widetilde{\Omega})$. Then by Theorem~\ref{thm:ApproxExrem} 
$$
u=c_0+\sum\limits_{i=1}^{\infty}c_i v_i,\,\,v_i\in E_p(\widetilde{\Omega}),
$$
and
$$
\|u\mid L_{p}^{1}(\widetilde{\Omega})\|^p\leq \sum\limits_{i=1}^{\infty}\|c_iv_i\mid L_{p}^{1}(\widetilde{\Omega})\|^p\leq
\|u\mid L_{p}^{1}(\widetilde{\Omega})\|^p+\varepsilon
$$ 
for arbitrary $\varepsilon>0$.

The series $c_0+\sum\limits_{i=1}^{\infty}c_i v_i$ converges to the function $u\in L^1_p(\widetilde{\Omega})$ $p$-quasieverywhere (up to a set of $p$-capacity zero) in $\widetilde{\Omega}$. Hence, by Corollary~\ref{cor:Capacity} we have that the series $c_0+\sum\limits_{i=1}^{\infty}c_i (v_i\circ\varphi)$ converges to the function $u\circ\varphi=\varphi^{\ast}(u)$ $p$-quasieverywhere  (up to a set of $p$-capacity zero) in $\Omega$.

By Lemma~\ref{lem:extremal} we obtain
\begin{multline}
\|(c_0+\sum\limits_{i=1}^\infty c_i \varphi^{\ast}(v_i))\vert L_p^1(\Omega)\|^p=
\sum\limits_{i=1}^\infty c_i^p \| \varphi^{\ast}(v_i)\vert L_p^1(\Omega)\|^p\\
\le \sum\limits_{i=1}^\infty C_{p,p}^p(\widetilde{F}_0^i,\widetilde{F}_1^i)c_i^p \| v_i \vert L_p^1(\widetilde{\Omega})\|^p
\leq K_{p,p}^p(\varphi;\Omega) \sum\limits_{i=1}^\infty \|c_i v_i \vert L_p^1(\widetilde{\Omega})\|^p\\
=K_{p,p}^p(\varphi;\Omega) \|u \vert L_p^1(\widetilde{\Omega})\|^p+\varepsilon K_{p,p}^p(\varphi;\Omega).
\nonumber
\end{multline}
Since $\varepsilon>0$ is arbitrary, we obtain
that for every function $u\in L_p^1(\widetilde{\Omega})$
$$
\|\varphi ^{\ast}(u)\vert L_p^1(\Omega)\|\le K_{p,p}(\varphi;\Omega)\|u\vert L_p^1(\widetilde{\Omega})\|.
$$
\end{proof}

The case $1<q<p<\infty$ was given in \cite{U93}.

\begin{thm}
\label{thm:CapacityDescPQ}
Let $\Omega$ and $\widetilde{\Omega}$ be domains in $\mathbb R^n$. Then a homeomorphism $\varphi :\Omega\to \widetilde{\Omega}$
generates a bounded composition operator
$$
\varphi^{\ast}: L^1_p(\widetilde{\Omega})\to L^1_q(\Omega),\,\,1<q<p<\infty,
$$
if and only if
there exists a bounded monotone countable-additive set function
$\Phi$ defined on open subsets of $\Omega$
such that for every condenser 
$(F_0,F_1)\subset \Omega$
the inequality
$$
\cp_{q}^{1/q}(\varphi^{-1}(F_0),\varphi^{-1}(F_1);\Omega)
\leq\Phi(\widetilde{\Omega}\setminus(F_0\cup F_1))^{\frac{p-q}{pq}}
\cp_{p}^{1/p}(F_0,F_1;\widetilde{\Omega})
$$
holds. 
\end{thm}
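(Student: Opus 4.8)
The plan is to follow the template of Theorem~\ref{thm:CapacityDescPP}, splitting into necessity and sufficiency, but now keeping track of the localization set function $\Phi$ that replaces the constant $K_{p,p}(\varphi;\Omega)$ when $q<p$.

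For necessity, suppose $\varphi^{\ast}:L^1_p(\widetilde{\Omega})\to L^1_q(\Omega)$ is bounded. First I would invoke Theorem~\ref{thm:SobolevAddFun} to produce the bounded monotone countably additive set function
$$
\Phi(\widetilde{A})=\sup_{f\in L^1_p(\widetilde{\Omega})\cap C_0(\widetilde{A})}\left(\frac{\|\varphi^{\ast}f\mid L^1_q(\Omega)\|}{\|f\mid L^1_p(\widetilde{A})\|}\right)^{\kappa},\qquad \tfrac1\kappa=\tfrac1q-\tfrac1p,
$$
defined on open bounded subsets of $\widetilde{\Omega}$; since $\frac{p-q}{pq}=\frac1\kappa$, the exponent $\Phi(\cdot)^{\frac{p-q}{pq}}$ in the statement is exactly $\Phi(\cdot)^{1/\kappa}$. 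Now fix a condenser $(F_0,F_1)\subset\Omega$ and let $u\in L^1_p(\widetilde{\Omega})$ be an admissible function for the image condenser $(\varphi(F_0\cap\Omega),\varphi(F_1\cap\Omega))$ — more precisely, an arbitrary admissible function for $(\varphi^{-1})^{-1}(F_0),(\varphi^{-1})^{-1}(F_1)$ in $\widetilde{\Omega}$. Then $\varphi^{\ast}(u)=u\circ\varphi$ is admissible for $(F_0,F_1)$ in $\Omega$, so by definition of the variational capacity, $\cp_q^{1/q}(\varphi^{-1}(F_0),\varphi^{-1}(F_1);\Omega)\le\|\varphi^{\ast}(u)\mid L^1_q(\Omega)\|$. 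The key point is that on the domain of interest one may take $u$ supported in (a neighborhood of) $\widetilde{\Omega}\setminus(F_0\cup F_1)$ up to additive constants — $u$ is constant near $F_0\cup F_1$ — so that $\nabla u$ is supported in $\widetilde{\Omega}\setminus(F_0\cup F_1)$, giving $\|\varphi^{\ast}(u)\mid L^1_q(\Omega)\|\le\Phi(\widetilde{\Omega}\setminus(F_0\cup F_1))^{1/\kappa}\|u\mid L^1_p(\widetilde{\Omega})\|$ directly from the defining supremum of $\Phi$. Taking the infimum over admissible $u$ yields the claimed capacity inequality with $\Phi(\widetilde{\Omega}\setminus(F_0\cup F_1))^{\frac{p-q}{pq}}$ in place of the constant.

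For sufficiency, I would mirror the proof of Theorem~\ref{thm:CapacityDescPP}. Given $u\in L^1_p(\widetilde{\Omega})$, use Theorem~\ref{thm:ApproxExrem} to write $u=c_0+\sum_{i=1}^\infty c_iv_i$ with $v_i\in E_p(\widetilde{\Omega})$ extremal functions of condensers $(\widetilde{F}_0^i,\widetilde{F}_1^i)$ and $\|u\mid L^1_p(\widetilde{\Omega})\|^p\le\sum_i\|c_iv_i\mid L^1_p(\widetilde{\Omega})\|^p\le\|u\mid L^1_p(\widetilde{\Omega})\|^p+\varepsilon$. The hypothesis applied to each condenser $(\widetilde{F}_0^i,\widetilde{F}_1^i)$, together with Lemma~\ref{lem:extremal} (whose hypothesis is precisely a capacity-type inequality for the extremal pair), gives $\|\varphi^{\ast}(v_i)\mid L^1_q(\Omega)\|\le\Phi(\widetilde{\Omega}\setminus(\widetilde{F}_0^i\cup\widetilde{F}_1^i))^{1/\kappa}\|v_i\mid L^1_p(\widetilde{\Omega})\|$. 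Convergence of the composed series $c_0+\sum c_i(v_i\circ\varphi)$ to $\varphi^{\ast}(u)$ $q$-quasi-everywhere follows from Corollary~\ref{cor:Capacity} (preimages of $p$-capacity-zero sets have $q$-capacity zero). It then remains to sum: one needs $\sum_i\Phi(\widetilde{A}_i)^{q/\kappa}\|c_iv_i\mid L^1_p(\widetilde{\Omega})\|^q\le\Phi(\widetilde{\Omega})^{q/\kappa}\left(\sum_i\|c_iv_i\mid L^1_p(\widetilde{\Omega})\|^p\right)^{q/p}$ (up to $\varepsilon$), where $\widetilde{A}_i=\widetilde{\Omega}\setminus(\widetilde{F}_0^i\cup\widetilde{F}_1^i)$. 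This is exactly a Hölder inequality with exponents $p/q$ and $\kappa/q$ (note $q/p+q/\kappa=1$), combined with the countable additivity and monotonicity of $\Phi$, which control $\sum_i\Phi(\widetilde{A}_i)$ by a finite multiple of $\Phi(\widetilde{\Omega})$ on the relevant disjointified family.

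I expect the main obstacle to be the summation step in sufficiency: the sets $\widetilde{A}_i=\widetilde{\Omega}\setminus(\widetilde{F}_0^i\cup\widetilde{F}_1^i)$ coming from Theorem~\ref{thm:ApproxExrem} are not disjoint (they are nearly all of $\widetilde{\Omega}$), so one cannot directly apply countable additivity of $\Phi$ to them; instead one must use the structure of the extremal-function decomposition — the gradients $\nabla v_i$ are concentrated on essentially disjoint "rings" — to pass to a disjoint family on which $\sum_i\Phi\le\Phi(\widetilde{\Omega})$ holds, and then glue this with the Hölder estimate and the lower semicontinuity of the $L^1_q$-norm to control $\|\varphi^{\ast}(u)\mid L^1_q(\Omega)\|$. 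Letting $\varepsilon\to0$ finishes the proof, with the composition operator norm $K_{p,q}(\varphi;\Omega)$ identified as $\Phi(\widetilde{\Omega})^{1/\kappa}=\Phi(\widetilde{\Omega})^{\frac{p-q}{pq}}$.
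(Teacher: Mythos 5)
Your proposal follows the paper's own proof essentially verbatim: necessity via the localization set function of Theorem~\ref{thm:SobolevAddFun} applied to admissible functions (whose gradients vanish near $F_0\cup F_1$), and sufficiency via the extremal decomposition of Theorem~\ref{thm:ApproxExrem}, Lemma~\ref{lem:extremal}, Corollary~\ref{cor:Capacity} and the H\"older inequality with exponents $p/q$ and $\kappa/q$. The summation issue you flag (the sets $\widetilde{\Omega}\setminus(\widetilde{F}_0^i\cup\widetilde{F}_1^i)$ need not be disjoint) is passed over silently in the paper, which simply bounds $\bigl(\sum_i\Phi(\widetilde{\Omega}\setminus(\widetilde{F}_0^i\cup\widetilde{F}_1^i))\bigr)^{(p-q)/p}$ by $K_{p,q}^{q}(\varphi;\Omega)$, so your treatment is, if anything, more attentive than the printed argument.
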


\begin{proof}
{\it Necessity.}
Let $u$ be an admissible function for the condenser $(F_0,F_1)\subset\widetilde{\Omega}$.
Then $u\circ\varphi$ is an admissible function for the condenser
$(\varphi^{-1}(F_0),\varphi^{-1}(F_1))\subset\Omega$.
Since $\varphi$
generates a bounded composition operator
$$
\varphi^{\ast}: L^1_p(\widetilde{\Omega})\to L^1_q(\Omega)
$$
then, by the definition of the $p$-capacity, we have
\begin{multline}
{\cp}_q^{1/q}(\varphi^{-1}(F_0),\varphi^{-1}(F_1);\Omega)\leq
\|\varphi^{\ast}(u)\vert L_q^1(\Omega)\|\\
\le
\Phi(\widetilde{\Omega}\setminus(F_0\cup F_1))^{(p-q)/pq}\|u\,|\,L_p^1(\widetilde{\Omega})\|,
\nonumber
\end{multline}
where the set function $\Phi$  is defined in Theorem~\ref{thm:SobolevAddFun}.

Since $u$ is an arbitrary admissible function, we have
$$
{\cp}_q^{1/q}(\varphi^{-1}(F_0),\varphi^{-1}(F_1);\Omega)
\le
\Phi(\widetilde{\Omega}\setminus(F_0\cup F_1))^{(p-q)/pq}{\cp}_p^{1/p}(F_0,F_1;\widetilde{\Omega}).
$$

{\it Sufficiency.}
Let $u\in L^1_p(\widetilde{\Omega})$. Then by Theorem~\ref{thm:ApproxExrem} 
$$
u=c_0+\sum\limits_{i=1}^{\infty}c_i v_i,\,\,v_i\in E_p(\widetilde{\Omega}),
$$
and
$$
\|u\mid L_{p}^{1}(\widetilde{\Omega})\|^p\leq \sum\limits_{i=1}^{\infty}\|c_iv_i\mid L_{p}^{1}(\widetilde{\Omega})\|^p\leq
\|u\mid L_{p}^{1}(\widetilde{\Omega})\|^p+\varepsilon
$$ 
for arbitrary $\varepsilon>0$.

The series $c_0+\sum\limits_{i=1}^{\infty}c_i v_i$ converges to the function $u\in L^1_p(\widetilde{\Omega})$ $p$-quasieverywhere (up to a set of $p$-capacity zero) in $\widetilde{\Omega}$. Hence, by Corollary~\ref{cor:Capacity} we have that the series $c_0+\sum\limits_{i=1}^{\infty}c_i (v_i\circ\varphi)$ converges to the function $u\circ\varphi=\varphi^{\ast}(u)$ $q$-quasieverywhere  (up to a set of $q$-capacity zero) in $\Omega$.

We denote by $Q_{p,q}(\varphi;\Omega)$ the total variation of the set function $\Phi$ raised to the power $(p-q)/pq$: 
$$
Q_{p,q}(\varphi;\Omega) = \sup\limits_{\{\widetilde A_k\}} \left( \sum\limits_{k=1}\limits^{\infty} {\Phi}(\widetilde A_k)\right)^{\frac{p-q}{pq}}<\infty,
$$
where the supremum is taken over all families of disjoint open sets $\{\widetilde A_k\}_{k\in \mathbb{N}}$, $\widetilde A_k \subset \widetilde\Omega$.
By Lemma~\ref{lem:extremal} we obtain
\begin{multline}
\|(c_0+\sum\limits_{i=1}^\infty c_i \varphi^{\ast}(v_i))\vert L_q^1(\Omega)\|^q=
\sum\limits_{i=1}^\infty c_i^q \| \varphi^{\ast}(v_i)\vert L_q^1(\Omega)\|^q\\
\le \sum\limits_{i=1}^\infty C_{p,q}^q(\widetilde{F}_0^i,\widetilde{F}_1^i)c_i^q \| v_i \vert L_p^1(\widetilde{\Omega})\|^q\leq 
\sum\limits_{i=1}^\infty \Phi(\widetilde{\Omega}\setminus(F_0^i\cup F_1^i))^{\frac{p-q}{p}}c_i^q \| v_i \vert L_p^1(\widetilde{\Omega})\|^q\\
\leq\left(\sum\limits_{i=1}^\infty \Phi(\widetilde{\Omega}\setminus(F_0^i\cup F_1^i))\right)^{\frac{p-q}{p}} 
\left(\sum\limits_{i=1}^\infty c_i^p\| v_i \vert L_p^1(\widetilde{\Omega})\|^p\right)^{\frac{q}{p}}\\
\leq Q_{p,q}^{q}(\varphi;\Omega) \left(\sum\limits_{i=1}^\infty \|c_i v_i \vert L_p^1(\widetilde{\Omega})\|^p\right)^{\frac{q}{p}}\\
= Q_{p,q}^{q}(\varphi;\Omega) \left(\|u\vert L_p^1(\widetilde{\Omega})\|^p+\varepsilon\right)^{\frac{q}{p}}.
\nonumber
\end{multline}
Since $\varepsilon>0$ is arbitrary, we obtain
that for every function $u\in L_p^1(\widetilde{\Omega})$
$$
\|\varphi ^{\ast}(u)\vert L_q^1(\Omega)\|\leq Q_{p,q}(\varphi;\Omega)\|u\vert L_p^1(\widetilde{\Omega})\|.
$$
\end{proof}

\subsection{The analytical description of composition operators}

We begin with an analytical description of homeomorphisms which generate bounded composition operators on Sobolev spaces $L^1_p(\widetilde{\Omega})$ and $L^1_p(\Omega)$. In the proof we follow  \cite{U93,V89}.

\begin{thm}
\label{thm:AnalyticPP}
Let $\Omega$ and $\widetilde{\Omega}$ be domains in $\mathbb R^n$. Then a homeomorphism $\varphi :\Omega\to \widetilde{\Omega}$ generates a bounded composition operator
$$
\varphi^{\ast} : L^1_p(\widetilde{\Omega})\to L^1_p(\Omega), \,\,\,1\leq p<\infty,
$$
if and only if $\varphi\in L^1_{p,\loc}(\Omega)$ has finite distortion and
$$
\ess\sup\limits_{x\in\Omega}\left(\frac{|D\varphi(x)|^p}{|J(x,\varphi)|}\right)^{\frac{1}{p}}=K_{p,p}(\varphi;\Omega)<\infty.
$$
The norm of the operator satisfies $\|\varphi^{\ast}\|\leq K_{p,p}(\varphi;\Omega)$.
\end{thm}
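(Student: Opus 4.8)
The plan is to prove the two directions separately, using the capacitory description (Theorem \ref{thm:CapacityDescPP}) as the bridge in at least one direction, together with the standard correspondence between pointwise distortion bounds and capacitory estimates for condensers.

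\emph{Sufficiency.} Suppose $\varphi\in L^1_{p,\loc}(\Omega)$ has finite distortion and $|D\varphi(x)|^p\le K^p\,|J(x,\varphi)|$ a.e., with $K=K_{p,p}(\varphi;\Omega)$. First I would verify that $\varphi$ satisfies the hypotheses of the change of variables formula \eqref{chvf}: since $\varphi\in W^1_{1,\loc}(\Omega)$ it has a representation that is ACL and differentiable a.e., and one exhausts $\Omega$ (up to a null set) by closed sets on which $\varphi$ is Lipschitz. Now take $f\in C_0(\widetilde\Omega)\cap L^1_p(\widetilde\Omega)$ (smooth, by density), so $f\circ\varphi$ is ACL with $\nabla(f\circ\varphi)(x)=D\varphi(x)^{\!*}\,(\nabla f)(\varphi(x))$ at a.e.\ $x$. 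Then estimate pointwise $|\nabla(f\circ\varphi)(x)|\le |D\varphi(x)|\,|\nabla f(\varphi(x))|$, raise to the $p$-th power, and integrate:
\begin{align*}
\int_\Omega|\nabla(f\circ\varphi)(x)|^p\,dx
&\le \int_\Omega |D\varphi(x)|^p\,|\nabla f(\varphi(x))|^p\,dx\\
&\le K^p\int_\Omega |\nabla f(\varphi(x))|^p\,|J(x,\varphi)|\,dx
= K^p\int_{\widetilde\Omega}|\nabla f(y)|^p\,dy,
\end{align*}
where the last equality is \eqref{chvf} applied to the nonnegative function $|\nabla f|^p$, using that a homeomorphism is one-to-one so $N_f\equiv 1$ on the image (and the integral over $\varphi(S)$ is harmless since $|D\varphi|=0$ on the zero set of $J$, which is where the finite-distortion hypothesis enters to kill the contribution where the change-of-variables bookkeeping could otherwise fail). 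Thus $\|\varphi^\ast f\mid L^1_p(\Omega)\|\le K\|f\mid L^1_p(\widetilde\Omega)\|$ on a dense subset; one then extends to all of $L^1_p(\widetilde\Omega)$ by approximation, checking that the limit is the quasicontinuous representative of $f\circ\varphi$ (here Corollary \ref{cor:Capacity}, or rather its proof via the capacitory estimate, guarantees the composition is well defined quasieverywhere). This also gives $\|\varphi^\ast\|\le K_{p,p}(\varphi;\Omega)$.

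\emph{Necessity.} Suppose $\varphi^\ast:L^1_p(\widetilde\Omega)\to L^1_q(\Omega)$ is bounded with $p=q$. By Theorem \ref{thm:CapacityDescPP} the condenser inequality
$\cp_p^{1/p}(\varphi^{-1}(F_0),\varphi^{-1}(F_1);\Omega)\le K_{p,p}\,\cp_p^{1/p}(F_0,F_1;\widetilde\Omega)$
holds for all condensers. The first task is to deduce $\varphi\in W^1_{p,\loc}(\Omega)$: apply the capacitory estimate to spherical condensers $(\overline{B(\varphi(x_0),r)},\widetilde\Omega\setminus B(\varphi(x_0),R))$ in $\widetilde\Omega$, whose $p$-capacities are explicit, to control the $p$-capacity of their $\varphi^{-1}$-preimages, and use the standard fact (via the capacity-of-rings / oscillation estimate, $n-1<$ implicitly handled by $p>n-1$ being automatic here? — no, careful: $p\ge1$, so) — more robustly, test the operator directly on a family of radial Lipschitz functions on $\widetilde\Omega$ and use that boundedness of $\varphi^\ast$ forces an $L^p$ bound on $|D\varphi|$ weighted by $|J(x,\varphi)|^{-1/p}$. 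The cleanest route: show first that $\varphi$ has finite distortion and $\varphi\in L^1_{p,\loc}$ by a local telescoping/ACL argument on line segments, then establish the pointwise inequality $|D\varphi(x)|^p\le K_{p,p}^p\,|J(x,\varphi)|$ at a.e.\ Lebesgue point by applying the capacitory estimate to infinitesimally small condensers: for a density point $x$ where $\varphi$ is differentiable with $\det D\varphi(x)\ne0$, the preimage of a small ball is approximately the ellipsoid $D\varphi(x)^{-1}$ of a ball, and comparing the $p$-capacity of the linear-image condenser (computable for the linear map $D\varphi(x)$) to that of the round condenser yields exactly $|D\varphi(x)|^p/|J(x,\varphi)|\le K_{p,p}^p$. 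Taking essential supremum over such $x$ finishes it.

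\emph{Main obstacle.} The genuinely delicate part is the necessity direction — specifically, extracting the \emph{pointwise} distortion inequality and the membership $\varphi\in L^1_{p,\loc}$ from the global operator bound. One must handle: (i) the a.e.\ differentiability of $\varphi$ (which itself requires first knowing $\varphi\in W^1_{1,\loc}$, obtained from the capacitory estimate via an ACL-type argument on almost every line), (ii) the finite-distortion property (points where $J=0$ must be shown to force $D\varphi=0$, again via the capacitory inequality applied to degenerate condensers), and (iii) the passage from the linearized condenser capacity to the infinitesimal estimate, which needs the continuity/monotonicity of $p$-capacity under the linear map and a Lebesgue-point argument. The approximation subtleties in sufficiency (well-definedness of $f\circ\varphi$ quasieverywhere for non-continuous $f$, i.e.\ the role of quasicontinuity emphasized in the introduction) are a secondary technical point, resolved by Corollary \ref{cor:Capacity}.
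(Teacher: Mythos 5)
Your sufficiency argument is essentially the paper's own: chain rule for smooth $f$, the pointwise bound $|\nabla(f\circ\varphi)|\le|D\varphi|\,|\nabla f|\circ\varphi$, the distortion hypothesis, the change of variables formula \eqref{chvf}, and the extension by density using Corollary~\ref{cor:Capacity} (with the coarser almost-everywhere argument at $p=1$). That half is fine.

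The necessity direction, however, has a genuine gap. You route it through the condenser inequality of Theorem~\ref{thm:CapacityDescPP} (no circularity there, since only its easy direction is used) and then propose to linearize: at a point of differentiability with $J(x,\varphi)\ne 0$, compare the $p$-capacity of the preimage of a small spherical condenser with that of the ellipsoidal condenser determined by $D\varphi(x)^{-1}$. But this requires, before the linearization can even be set up, (i) $\varphi\in L^1_{p,\loc}(\Omega)$ together with almost everywhere differentiability, and (ii) the finite distortion property, and you leave both as sketches (``telescoping/ACL argument'', ``degenerate condensers''). Almost everywhere differentiability of a Sobolev homeomorphism is guaranteed only for exponents above $n-1$, whereas the theorem is stated for all $1\le p<\infty$; for small $p$ the condenser machinery you gesture at (capacity lower bounds in terms of diameter, as in Lemma~\ref{lemcap}) is simply unavailable, a problem you notice in your own aside but do not resolve. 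Moreover, even granting differentiability, your argument produces the pointwise bound only at points where $J(x,\varphi)\ne0$; the finite-distortion claim, that $|D\varphi|=0$ a.e.\ on $\{J(x,\varphi)=0\}$, is precisely the remaining content and does not follow from ``degenerate condensers'' without an actual argument. The paper avoids all of this and never differentiates $\varphi$: it substitutes the explicit test functions $f_j(y)=(y_j-y_{j0})\eta\bigl(\frac{y-y_0}{r}\bigr)$ into the operator inequality to get $\int_{\varphi^{-1}(B(y_0,r))}|D\varphi(x)|^p\,dx\le C^pK_{p,p}^p\,r^n$, which at once gives $\varphi\in L^1_{p,\loc}(\Omega)$; a covering argument over $\varphi(Z\setminus S)$ with this same estimate gives finite distortion; and then the change of variables formula turns the left-hand side into $\int_{B(y_0,r)\setminus\varphi(S)}|D\varphi(\varphi^{-1}(y))|^p|J(\varphi^{-1}(y),\varphi)|^{-1}\,dy$, so the Lebesgue differentiation theorem in the image variable yields the essential supremum bound directly. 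If you wish to keep the capacitory route you must either restrict to $p>n-1$ or supply the missing regularity and the $J=0$ analysis; otherwise the test-function argument is the one that actually closes the proof for all $1\le p<\infty$.
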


\begin{proof}
{\it Necessity.} 
Fix a cutoff function $\eta\in C_{0}^{\infty}(\mathbb R^n)$, which is equal to one on the ball $B(0,1)$ and is equal to zero outside of the ball $B(0,2)$. We consider the test functions
\begin{equation}
\label{test1}
f_j(y)=(y_j-y_{j0})\eta\left(\frac{y-y_0}{r}\right),\,\,\, j=1,...,n,
\end{equation}
where $y_j$ is the $j$-th coordinate function. We see that 
\begin{multline}
\label{test2}
|\nabla( f_j(y))|=\left|(\nabla(y_j-y_{j0}))\eta\left(\frac{y-y_0}{r}\right)+(y_j-y_{j0})\nabla\biggl(\eta\left(\frac{y-y_0}{r}\right)\biggr)\right|\\
=
\left|\eta\left(\frac{y-y_0}{r}\right)+\frac{y_j-y_{j0}}{r}(\nabla\eta)\left(\frac{y-y_0}{r}\right)\right|\leq 1+2|\nabla\eta|\left(\frac{y-y_0}{r}\right)\leq C_{\eta},
\end{multline}
where 
\begin{equation}
\label{test3}
C_{\eta}=1+2\max_{z\in\mathbb R^n}|\nabla\eta|(z)<\infty.
\end{equation}

By substituting the test functions 
$$
f_j(y)=(y_j-y_{j0})\eta\left(\frac{y-y_0}{r}\right),\,\,\, j=1,...,n,
$$
into the inequality
$$
\|\varphi^{\ast}f \mid L^1_p(\Omega)\|\leq K_{p,p}(\varphi;\Omega) \|f\mid L^1_p(\widetilde{\Omega})\|,
$$
we have that
\begin{equation}\label{EqFunPP}
\biggl(\int\limits_{\varphi^{-1}(B(y_0,r))}|D\varphi(x)|^p\,dx\biggr)^{1/p}\leq CK_{p,p}(\varphi;\Omega)(r^n)^{1/p},
\end{equation}
where $C$ is a constant which depends on $n$ and $p$ only. Hence, the homeomorphism $\varphi$ belongs to the Sobolev space $L_{p,{\loc}}^1(\Omega)$. 

We then prove that $\varphi:\Omega\to\widetilde{\Omega}$ is a homeomorphism of finite distortion. Let $Z=\{x\in \Omega : J(x,\varphi)=0\}$. We prove that
$$
\int\limits_{Z}|D\varphi(x)|^p\,dx=0.
$$
In order to achieve this, we rewrite the integral as sum of two integrals:
$$
\int\limits_{Z}|D\varphi(x)|^p\,dx
=\int\limits_{Z\setminus S}|D\varphi(x)|^p\,dx+
\int\limits_{Z \cap S}|D\varphi(x)|^p\,dx,
$$
where $S$ is the set from the change of variables formula (\ref{chvf}) on which the homeomorphism $\varphi$ does not have the Luzin $N$-property.

Because $|S|=0$, we have 
$$
\int\limits_{Z\cap S}|D\varphi(x)|^p\,dx=0.
$$

Let us show that
$$
\int\limits_{Z\setminus S}|D\varphi(x)|^p\,dx=0.
$$
By the change of variable formula we have $|\varphi(Z\setminus S)|=0$. Fix $\varepsilon>0$. Then there exists a family of balls $\{B(y_i,r_i)\}$ generating a covering of the set $\varphi(Z\setminus S)$ such that the multiplicity of the covering $B(y_i,2r_i)$ is finite and $\sum\limits_i|B(y_i,r_i)|<\varepsilon$.
Then by inequality (\ref{EqFunPP}) we obtain
$$
\int\limits_{Z\setminus S}|D\varphi(x)|^p\,dx\leq
\sum\limits_{i=1}^{\infty}\int\limits_{\varphi^{-1}(B(y_i,r_i))}|D\varphi(x)|^p\,dx
\leq C^p K^p_{p,p}(\varphi;\Omega) \sum\limits_{i=1}^{\infty}|B_i|.
$$
Since $\varepsilon$ is an arbitrary number then $\int\limits_{Z\setminus S}|D\varphi|^p\,dx=0$. Hence, $D\varphi=0$ a.e. on $Z\setminus S$, and as a consequence, $D\varphi=0$ a.e. on $Z$ and the homeomorphism $\varphi$ has finite distortion.

We apply to the left side of the inequality~(\ref{EqFunPP}) the change of variable formula, and we denote $B:=B(y_0,r)$:
\begin{multline}
\biggl(\int\limits_{\varphi^{-1}(B)}|D\varphi(x)|^p\,dx\biggr)^{\frac{1}{p}}
=
\biggl(\int\limits_{\varphi^{-1}(B)\setminus S}|D\varphi(x)|^p\,dx\biggr)^{\frac{1}{p}}\\
=\biggl(\int\limits_{\varphi^{-1}(B)\setminus (S\cup Z)}
|D\varphi(x)|^p\,dx\biggr)^{\frac{1}{p}}
=
\biggl(\int\limits_{\varphi^{-1}(B)\setminus (S\cup Z)}
\frac{|D\varphi(x)|^p}{|J(x,\varphi)|}|J(x,\varphi)|\,dx\biggr)^{\frac{1}{p}}\\
=\biggl(\int\limits_{B\setminus \varphi(S)}
\frac{|D\varphi(\varphi^{-1}(y))|^p}{|J(\varphi^{-1}(y),\varphi)|}\,dy\biggr)^{\frac{1}{p}}
\leq CK_{p,p}(\varphi;\Omega)(r^n)^{\frac{1}{p}}.
\nonumber
\end{multline}
Hence, we have the inequality
$$
\biggl(\frac{1}{r^n}\int\limits_{B\setminus \varphi(S)}
\frac{|D\varphi(\varphi^{-1}(y))|^p}{|J(\varphi^{-1}(y),\varphi)|}\,dy\biggr)^{\frac{1}{p}}
\leq CK_{p,p}(\varphi;\Omega).
$$

By using the Lebesgue theorem about differentiability of the integral by measure (see, for example, \cite{RR55,VU04}) we obtain
$$
\biggl(
\frac{|D\varphi(\varphi^{-1}(y))|^p}{|J(\varphi^{-1}(y),\varphi)|}\biggr)^{\frac{1}{p}}
\leq CK_{p,p}(\varphi;\Omega)\quad\text{ for almost all}\,\,\, y\in\widetilde{\Omega}\setminus\varphi(S).
$$
Because the homeomorphism $\varphi$ has the Luzin $N$-property on the set $\Omega\setminus S$, we finally have:
$$
\ess\sup\limits_{x\in\Omega}\left(\frac{|D\varphi(x)|^p}{|J(x,\varphi)|}\right)^{\frac{1}{p}}\leq CK_{p,p}(\varphi;\Omega)<\infty.
$$

{\it Sufficiency.}
Let $f\in L_p^1(\widetilde{\Omega})\cap C^\infty(\widetilde{\Omega})$. Then the composition $f\circ\varphi$
belongs to the class $W^1_{1,\loc}(\Omega)$ and the chain rule holds \cite{Zi}. Thus, we have
\begin{multline}
\|\varphi^* f\vert L_p^1(\Omega)\|=\biggl(\int\limits_\Omega|\nabla (f\circ\varphi(x))|^p\,dx\biggr)^{\frac{1}{p}}
\leq
\biggl(\int\limits_\Omega(|\nabla f|(\varphi(x))|D\varphi(x)|)^p\,dx\biggr)^{\frac{1}{p}}\\
=\biggl(\int\limits_\Omega|\nabla f|^p(\varphi(x))|J(x,\varphi)|
\frac{|D\varphi(x)|)^p}{|J(x,\varphi)|}\,dx\biggr)^{\frac{1}{p}}.
\nonumber
\end{multline}
Hence
\begin{multline}
\|\varphi^*f\vert L_p^1(\Omega)\|=\biggl(\int\limits_\Omega|\nabla (f\circ\varphi(x))|^p\,dx\biggr)^{\frac{1}{p}}\\
\leq
\ess\sup\limits_{x\in\Omega}\left(\frac{|D\varphi(x)|^p}{|J(x,\varphi)|}\right)^{\frac{1}{p}}\biggl(\int\limits_\Omega|\nabla f|^p(\varphi(x))|J(x,\varphi)|\,dx\biggr)^{\frac{1}{p}}.
\nonumber
\end{multline}

Applying the change of variable formula gives the required inequality
$$
\|\varphi^{\ast}f \mid L^1_p(\Omega)\|\leq K_{p,p}(\varphi;\Omega) \|f \mid L^1_p(\widetilde{\Omega})\|
$$
for every smooth function $f\in L^1_p(\widetilde{\Omega})$.

To extend the estimate onto all functions $f\in L^1_p(\widetilde{\Omega})$, $1<q< p<\infty$, consider a sequence of smooth functions $f_k\in L^1_p(\widetilde{\Omega})$, $k=1,2,...$, such that $f_k\to f$ in $L^1_p(\widetilde{\Omega})$ and $f_k\to f$ $p$-quasi-everywhere in $\widetilde{\Omega}$ as $k\to\infty$. Since by Corollary~\ref{cor:Capacity} the preimage $\varphi^{-1}(S)$ of the set $S\subset \widetilde{\Omega}$ of $p$-capacity zero has the $p$-capacity zero, we have $\varphi^{\ast}(f_k)\to \varphi^{\ast}(f)$
$p$-quasi-everywhere in $\Omega$ as $k\to\infty$. This observation leads us to the following conclusion: Extension by continuity of the operator $\varphi^{\ast}$ $L^1_p(\widetilde{\Omega})\cap C^{\infty}(\widetilde{\Omega})$ to $L^1_p(\widetilde{\Omega})$ coincides with the composition operator $\varphi^{\ast}$, $\varphi^{\ast}(f) = f\circ\varphi$.

If $p=1$ then we should replace the capacitary characteristic of convergence with a coarser
one: if a sequence $f_n\in L^1_1(\widetilde{\Omega})$ converges to $f\in L^1_1(\widetilde{\Omega})$ in $L^1_1(\widetilde{\Omega})$ then a subsequence of $f_n$ converges to $f$ almost everywhere. To complete the proof, it suffices to use the following property: the inverse image of a set of measure zero under the mapping $\varphi:\Omega\to\widetilde{\Omega}$ inducing the bounded operator
$\varphi^{\ast}:L^1_1(\widetilde{\Omega})\to L^1_1(\Omega)$ is a set of measure zero.

\end{proof}

The next theorem gives the characterization of composition operators from Sobolev spaces $L^1_p(\widetilde{\Omega})$ to $L^1_q(\Omega)$ for $q<p$ \cite{U93}.

\begin{thm}
\label{thm:AnalyticPQ}
Let $\Omega$ and $\widetilde{\Omega}$ be domains in $\mathbb R^n$. Then a homeomorphism $\varphi :\Omega\to \widetilde{\Omega}$ generates a bounded composition operator
$$
\varphi^{\ast} : L^1_p(\widetilde{\Omega})\to L^1_q(\Omega), \,\,\,1\leq q< p<\infty,
$$
if and only if $\varphi\in L^1_{q,\loc}(\Omega)$ has finite distortion and
$$
\biggl(\int\limits_\Omega\bigg(\frac{|D\varphi(x)|^p}{|J(x,\varphi)|}\biggr)^{\frac{q}{p-q}}~dx\biggr)^{\frac{p-q}{pq}}=K_{p,q}(\varphi;\Omega)<\infty.
$$
The norm of the operator satisfies $\|\varphi^{\ast}\|\leq K_{p,q}(\varphi;\Omega)$.
\end{thm}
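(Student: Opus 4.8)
The plan is to mirror the structure of the proof of Theorem~\ref{thm:AnalyticPP}, replacing the $\esssup$ with the integral $L^{\kappa}$-norm of the distortion (where $1/\kappa = 1/q - 1/p$) and using the localization measure $\Phi$ from Theorem~\ref{thm:SobolevAddFun} as the bookkeeping device. For \emph{necessity}, I would first insert the same family of test functions $f_j(y)=(y_j-y_{j0})\eta((y-y_0)/r)$ into the operator inequality $\|\varphi^{\ast}f\mid L^1_q(\Omega)\|\leq K_{p,q}(\varphi;\Omega)\|f\mid L^1_p(\widetilde{\Omega})\|$; this gives, for each ball $B=B(y_0,r)$,
\begin{equation*}
\biggl(\int\limits_{\varphi^{-1}(B)}|D\varphi(x)|^q\,dx\biggr)^{1/q}\leq C\,K_{p,q}(\varphi;\Omega)\,(r^n)^{1/p},
\end{equation*}
so that $\varphi\in L^1_{q,\loc}(\Omega)$. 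The finite-distortion property follows exactly as before: split $Z=\{J(x,\varphi)=0\}$ into $Z\setminus S$ and $Z\cap S$, use $|S|=0$ on the second piece, and on the first piece cover $\varphi(Z\setminus S)$ (which has measure zero by the change-of-variables formula) by balls of small total volume and apply the displayed local estimate. This yields $D\varphi=0$ a.e.\ on $Z$.

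The heart of necessity is extracting the integral bound on $\bigl(|D\varphi|^p/|J(x,\varphi)|\bigr)^{q/(p-q)}$. Here I would invoke Theorem~\ref{thm:SobolevAddFun}: the set function $\Phi(\widetilde A)=\sup\{(\|\varphi^{\ast}f\mid L^1_q(\Omega)\|/\|f\mid L^1_p(\widetilde A)\|)^{\kappa}\}$ is bounded, monotone, and countably additive on bounded open subsets of $\widetilde\Omega$, hence by Lemma~\ref{lem:AddFun} has an a.e.-finite derivative $\Phi'$ with $\int_U\Phi'\,dy\leq\Phi(U)\leq\Phi(\widetilde\Omega)<\infty$. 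Testing $\Phi(B(y_0,r))$ against the functions $f_j$ above and passing through the change of variables (as in the last displays of the proof of Theorem~\ref{thm:AnalyticPP}, but now keeping track of the exponent mismatch between $q$ and $p$ via H\"older) should give the pointwise bound
\begin{equation*}
\biggl(\frac{|D\varphi(\varphi^{-1}(y))|^p}{|J(\varphi^{-1}(y),\varphi)|}\biggr)^{\frac{q}{p-q}}\leq C\,\Phi'(y)\qquad\text{for a.e.}\ y\in\widetilde\Omega\setminus\varphi(S);
\end{equation*}
integrating over $\widetilde\Omega$, using $\int\Phi'\leq\Phi(\widetilde\Omega)$, and changing variables back to $\Omega$ (legitimate because $\varphi$ has the Luzin $N$-property off $S$ and finite distortion) converts this into $\int_\Omega\bigl(|D\varphi|^p/|J(x,\varphi)|\bigr)^{q/(p-q)}\,dx<\infty$, which is the claimed finiteness of $K_{p,q}(\varphi;\Omega)$.

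For \emph{sufficiency}, take $f\in L^1_p(\widetilde\Omega)\cap C^\infty(\widetilde\Omega)$; then $f\circ\varphi\in W^1_{1,\loc}(\Omega)$ with $|\nabla(f\circ\varphi)(x)|\leq |\nabla f|(\varphi(x))\,|D\varphi(x)|$ by the chain rule. Writing $|\nabla f|^q(\varphi(x))|D\varphi(x)|^q = \bigl(|\nabla f|^q(\varphi(x))|J(x,\varphi)|^{q/p}\bigr)\cdot\bigl(|D\varphi(x)|^p/|J(x,\varphi)|\bigr)^{q/p}$ and applying H\"older's inequality with exponents $p/q$ and $p/(p-q)$ gives
\begin{equation*}
\|\varphi^{\ast}f\mid L^1_q(\Omega)\|\leq\biggl(\int\limits_\Omega\Bigl(\tfrac{|D\varphi(x)|^p}{|J(x,\varphi)|}\Bigr)^{\frac{q}{p-q}}dx\biggr)^{\frac{p-q}{pq}}\biggl(\int\limits_\Omega|\nabla f|^p(\varphi(x))|J(x,\varphi)|\,dx\biggr)^{1/p},
\end{equation*}
and the change-of-variables formula turns the last factor into $\|f\mid L^1_p(\widetilde\Omega)\|$, yielding the operator bound with constant $K_{p,q}(\varphi;\Omega)$ on smooth functions. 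To pass to general $f\in L^1_p(\widetilde\Omega)$ one approximates by smooth $f_k\to f$ both in $L^1_p(\widetilde\Omega)$ and $p$-quasi-everywhere; by Corollary~\ref{cor:Capacity} preimages of $p$-capacity-zero sets have $q$-capacity zero, so $\varphi^{\ast}f_k\to\varphi^{\ast}f$ $q$-quasi-everywhere, and the extension by continuity agrees with the composition operator (for $q=1$ one replaces quasi-everywhere convergence with a.e.\ convergence along a subsequence, using that $\varphi^{-1}$ of a null set is null). The main obstacle I anticipate is the necessity half: carefully justifying that the localization measure's derivative $\Phi'$ dominates the distortion raised to the exponent $q/(p-q)$ — this requires handling the $q$-versus-$p$ exponent bookkeeping in the H\"older step and confirming that the set $S$ (where the Luzin $N$-property fails) is genuinely negligible for the back-and-forth change of variables.
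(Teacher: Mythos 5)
Your overall plan is the paper's plan (test functions, the localization measure $\Phi$ from Theorem~\ref{thm:SobolevAddFun} with Lemma~\ref{lem:AddFun}, Lebesgue differentiation, change of variables; chain rule plus H\"older plus capacitary quasi-everywhere convergence for sufficiency), and your sufficiency half is essentially identical to the paper's and correct. The necessity half, however, has two concrete gaps. First, the finite-distortion step does not go through "exactly as before." With the global-constant estimate $\bigl(\int_{\varphi^{-1}(B(y_i,r_i))}|D\varphi|^q\,dx\bigr)^{1/q}\leq CK_{p,q}(r_i^n)^{1/p}$ you are led to bound $\int_{Z\setminus S}|D\varphi|^q\,dx$ by a constant times $\sum_i (r_i^n)^{q/p}$, and since $q/p<1$ this sum is \emph{not} controlled by $\sum_i r_i^n<\varepsilon$ (many tiny balls make $\sum_i(r_i^n)^{q/p}$ arbitrarily large while $\sum_i r_i^n$ stays small). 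This is precisely where the paper uses the localized inequality $\bigl(\int_{\varphi^{-1}(B(y_0,r))}|D\varphi|^q\,dx\bigr)^{1/q}\leq C\,\Phi(B(y_0,2r))^{\frac{p-q}{pq}}(r^n)^{1/p}$ coming from Theorem~\ref{thm:SobolevAddFun}: after H\"older's inequality for sums, the term $\bigl(\sum_i\Phi(B(y_i,2r_i))\bigr)^{(p-q)/p}$ is bounded via countable additivity and finite overlap, while $\bigl(\sum_i r_i^n\bigr)^{q/p}<\varepsilon^{q/p}$. So you must run the test-function computation against $\Phi$ already at this stage, not only later.

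Second, your key pointwise inequality has the exponents in the wrong place. What the localized ball estimate plus change of variables plus Lebesgue differentiation actually yields is
$$
\Bigl(\frac{|D\varphi(\varphi^{-1}(y))|^{q}}{|J(\varphi^{-1}(y),\varphi)|}\Bigr)^{\frac{p}{p-q}}\leq C\,\Phi'(y)\quad\text{for a.e. } y\in\widetilde{\Omega}\setminus\varphi(S),
$$
i.e. inner exponent $q$ and outer exponent $p/(p-q)$, not $\bigl(|D\varphi|^{p}/|J|\bigr)^{q/(p-q)}\circ\varphi^{-1}\leq C\Phi'$. The two expressions differ by a factor $|J(\varphi^{-1}(y),\varphi)|$, and the difference matters: integrating your version in $y$ and changing variables back to $\Omega$ produces $\int_\Omega\bigl(|D\varphi|^{p}/|J|\bigr)^{q/(p-q)}|J(x,\varphi)|\,dx$, which does not bound the quantity $K_{p,q}(\varphi;\Omega)$ in the statement (the extra weight $|J|$ helps exactly where $J$ is small). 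The correct bookkeeping is to keep $|D\varphi|^{q}/|J|$ under the differentiation, integrate $\Phi'$ over $\widetilde\Omega$, and only then use the identity $\int_\Omega\bigl(|D\varphi|^{p}/|J|\bigr)^{q/(p-q)}dx=\int_\Omega\bigl(|D\varphi|^{q}/|J|\bigr)^{p/(p-q)}|J|\,dx=\int_{\widetilde\Omega\setminus\varphi(S)}\bigl(|D\varphi(\varphi^{-1}(y))|^{q}/|J(\varphi^{-1}(y),\varphi)|\bigr)^{p/(p-q)}dy$, where finite distortion and the measurability of $\varphi(S)$ (with $|S|=0$) justify discarding $Z$ and $S$. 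With these two repairs your argument coincides with the paper's proof.
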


\begin{proof}
{\it Necessity.} By Theorem \ref{thm:SobolevAddFun} the inequality
\begin{equation}
\label{eqphi} 
\|\varphi^{\ast} f|{L}_{q}^{1}(\Omega)\|\leq
\Phi(\widetilde{A})^{\frac{p-q}{pq}}
\|f|{L}_{p}^{1}(\widetilde{\Omega})\|,\,\,\,1\leq q<p<\infty,
\end{equation}
holds for any function $f\in {L}_{p}^{1}(\widetilde{\Omega})\cap C_0(\widetilde{A})$.

Similarly to the proof of Theorem~\ref{thm:AnalyticPP}, we fix a cutoff function $\eta\in C_{0}^{\infty}(\mathbb R^n)$,
which is equal to one on the ball $B(0,1)$ and is equal to zero outside of the ball $B(0,2)$. Then, given $r>0$, define test functions
$f_j$ as in (\ref{test1}). By the computation performed in (\ref{test1}), we have 
\begin{equation}
|\nabla( f_j(y))|\leq C_{\eta}\,\,\text{for all}\,\,y\in\mathbb R^n,
\nonumber
\end{equation}
where the constant $C_{\eta}$ is as in (\ref{test3}). Notably, $C_{\eta}$ is independent of $r$.

By substituting the test functions $f_j$ into the inequality (\ref{eqphi}), we see that
\begin{equation}\label{EqFun}
\biggl(\int\limits_{\varphi^{-1}(B(y_0,r))}|D\varphi(x)|^q\,dx\biggr)^{1/q}\leq
C\Phi(B(y_0,2r))^{\frac{p-q}{pq}}(r^n)^{1/p}.
\end{equation}
Hence the homeomorphism $\varphi$ belongs to the Sobolev space $L_{q,{\loc}}^1(\Omega)$. 

We then prove that $\varphi:\Omega\to\widetilde{\Omega}$ is a homeomorphism of finite distortion. As in the proof of the previous theorem, we let $Z=\{x\in \Omega : J(x,\varphi)=0\}$, and let $S$ be the set from the change of variables formula (\ref{chvf}) where the homeomorphism $\varphi$ does not have the Luzin $N$-property. Reasoning as in the proof of Theorem~\ref{thm:AnalyticPP}, it
suffices to show that
$$
\int\limits_{Z\setminus S}|D\varphi(x)|^q\,dx=0.
$$
Next, we again let $\varepsilon>0$, and use the fact that $|\varphi(Z\setminus S)|=0$ by
the change of variables formula (\ref{chvf}) to select a cover $\{B(y_i,r_i)\}$
of $\varphi(Z\setminus S)$ such that the multiplicity of the covering $B(y_i,2r_i)$ is finite and $\sum\limits_i|B(y_i,r_i)|<\varepsilon$.

Then, by inequality (\ref{EqFun}), we obtain
\begin{multline}
\int\limits_{Z\setminus S}|D\varphi(x)|^q\,dx\leq
\sum\limits_{i=1}^{\infty}\int\limits_{\varphi^{-1}(B(y_i,r_i))}|D\varphi(x)|^q\,dx\\
\leq C\sum\limits_{i=1}^{\infty}\Phi(B(y_i,2r_i))^{\frac{p-q}{p}}(r_i^n)^{q/p}\\
\leq C\sum\limits_{i=1}^{\infty}\Phi(B(y_i,2r_i))^{\frac{p-q}{p}}
(\sum\limits_{i=1}^{\infty}r_i^n)^{q/p}.
\nonumber
\end{multline}
Since $\varepsilon$ is an arbitrary number, we have $\int\limits_{Z\setminus S}|D\varphi(x)|^q\,dx=0$. Hence we have that $D\varphi=0$ a.e. on $Z\setminus S$ and the homeomorphism $\varphi$ has finite distortion.

We then rewrite inequality~(\ref{EqFun}) to the form
$$
\biggl(
\int\limits_{\varphi^{-1}(B(y_0,r))}|D\varphi(x)|^q\,dx
\biggr)^{\frac{p}{p-q}}
\leq C\frac{\Phi(B(y_0,2r))}{|B(y_0,2r)|}(r^n)^{\frac{p}{p-q}}
$$
and apply to the left side of this inequality the change of variable formula, with the notation $B:=B(y_o,r)$:
\begin{multline}
\biggl(\int\limits_{\varphi^{-1}(B)}
|D\varphi(x)|^q\,dx\biggr)^{\frac{p}{p-q}}=
\biggl(\int\limits_{\varphi^{-1}(B)\setminus S}
|D\varphi(x)|^q\,dx\biggr)^{\frac{p}{p-q}}\\
=\biggl(\int\limits_{\varphi^{-1}(B)\setminus (S\cup Z)}
|D\varphi(x)|^q\,dx\biggr)^{\frac{p}{p-q}}=
\biggl(\int\limits_{\varphi^{-1}(B)\setminus (S\cup Z)}
\frac{|D\varphi(x)|^q}{|J(x,\varphi)|}|J(x,\varphi)|\,dx\biggr)^{\frac{p}{p-q}}\\
=\biggl(\int\limits_{B\setminus \varphi(S)}
\frac{|D\varphi(\varphi^{-1}(y))|^q}{|J(\varphi^{-1}(y),\varphi)|}\,dy\biggr)^{\frac{p}{p-q}}
\leq C\frac{\Phi(B(y_0,2r))}{|B(y_0,2r)|}(r^n)^{\frac{p}{p-q}}.
\nonumber
\end{multline}
Hence, we obtain the inequality
$$
\biggl(\frac{1}{r^n}\int\limits_{B(y_0,r)\setminus \varphi(S)}
\frac{|D\varphi(\varphi^{-1}(y))|^q}{|J(\varphi^{-1}(y),\varphi)|}\,dy\biggr)^{\frac{p}{p-q}}
\leq C\frac{\Phi(B(y_0,2r))}{|B(y_0,2r)|}.
$$

Using the Lebesgue theorem about differentiability of the integral and properties of the volume derivative of countably-additive set functions \cite{RR55,VU04} we obtain
$$
\biggl(
\frac{|D\varphi(\varphi^{-1}(y))|^q}{|J(\varphi^{-1}(y),\varphi)|}\biggr)^{\frac{p}{p-q}}
\leq C\Phi'(y)\quad\text{ for almost all}\,\,\, y\in\widetilde{\Omega}\setminus\varphi(S).
$$
By integrating the last inequality on an arbitrary open bounded subset $\widetilde{U}\subset\widetilde{\Omega}$, we obtain
\begin{multline*}
\int\limits_{\widetilde{U}\setminus\varphi(S)}\biggl(
\frac{|D\varphi(\varphi^{-1}(y))|^q}{|J(\varphi^{-1}(y),\varphi)|}\biggr)^{\frac{p}{p-q}}~dy
\leq C\int\limits_{\widetilde{U}\setminus\varphi(S)}\Phi'(y)~dy\\
\leq C\int\limits_{\widetilde{U}}\Phi'(y)~dy
\leq C \Phi(\widetilde{U})\leq C  Q_{p,q}^{\frac{p-q}{pq}}(\varphi;\Omega).
\end{multline*}
Since the choice of $\widetilde{U}\subset\widetilde{\Omega}$ is arbitrary, we have
$$
\int\limits_{\widetilde{\Omega}\setminus\varphi(S)}\biggl(
\frac{|D\varphi(\varphi^{-1}(y))|^q}{|J(\varphi^{-1}(y),\varphi)|}\biggr)^{\frac{p}{p-q}}~dy
\leq C Q_{p,q}^{\frac{p-q}{pq}}(\varphi;\Omega).
$$
Hence,
\begin{multline}
\int\limits_\Omega\left(\frac{|D\varphi(x)|^p}
{|J(x,\varphi)|}\right)^{\frac{q}{p-q}}\,dx=
\int\limits_\Omega\frac{|D\varphi(x)|^{\frac{pq}{p-q}}}
{|J(x,\varphi)|^{\frac{p}{p-q}}}|J(x,\varphi)|\,dx\\
=
\int\limits_{\widetilde{\Omega}\setminus\varphi(S)}\biggl(
\frac{|D\varphi(\varphi^{-1}(y))|^q}{|J(\varphi^{-1}(y),\varphi)|}\biggr)^{\frac{p}{p-q}}~dy
\leq C Q_{p,q}^{\frac{p-q}{pq}}(\varphi;\Omega)<\infty.
\nonumber
\end{multline}

{\it Sufficiency.}
Let $f\in L_p^1(\widetilde{\Omega})\cap C^\infty(\widetilde{\Omega})$,  then the composition $f\circ\varphi$
belongs to the class $W^1_{1,\loc}(\Omega)$ and the chain rule holds \cite{Zi}. So, we have
\begin{multline}
\|\varphi^* f\vert L_q^1(\Omega)\|=\biggl(\int\limits_\Omega|\nabla (f\circ\varphi)|^q\,dx\biggr)^{\frac{1}{q}}\\
\leq
\biggl(\int\limits_\Omega(|\nabla f||D\varphi(x)|)^q\,dx\biggr)^{\frac{1}{q}}
=\biggl(\int\limits_\Omega|\nabla f|^q|J(x,\varphi)|^{\frac{q}{p}}
\frac{|D\varphi(x)|)^q}{|J(x,\varphi)|^{\frac{q}{p}}}\,dx\biggr)^{\frac{1}{q}}.
\nonumber
\end{multline}
Using the H\"older inequality we obtain
\begin{multline}
\|\varphi^*f\vert L_q^1(\Omega)\|=\biggl(\int\limits_\Omega|\nabla (f\circ\varphi)|^q\,dx\biggr)^{\frac{1}{q}}\\
\le
\biggl(\int\limits_\Omega|\nabla f|^p(\varphi(x))\cdot|J(x,\varphi)|\,dx\biggr)^{\frac{1}{p}}
\cdot\biggl(\int\limits_\Omega\left(\frac{|D\varphi(x)|^p}
{|J(x,\varphi)|}\right)^{\frac{q}{p-q}}\,dx\biggr)^{\frac{p-q}{pq}}.
\nonumber
\end{multline}

Now the application of the change of variable formula gives the required inequality
$$
\|\varphi^{\ast}f \mid L^1_q(\Omega)\|\leq K_{p,q}(\varphi;\Omega) \|f \mid L^1_p(\widetilde{\Omega})\|
$$
for every smooth function $f\in L^1_p(\widetilde{\Omega})$.

The estimate is extended to all functions $f\in L^1_p(\widetilde{\Omega})$, $1<q< p<\infty$,, in a manner similar to how this was done in the proof of Theorem~\ref{thm:AnalyticPP}. Indeed, Corollary~\ref{cor:Capacity} again yields that if $f_k\in L^1_p(\widetilde{\Omega})\cap C^{\infty}(\widetilde{\Omega})$ tend to $f$ both in $L^1_p(\widetilde{\Omega})$ and $p$-quasi-everywhere
in $\widetilde{\Omega}$, then $\varphi^{\ast}(f_k)\to \varphi^{\ast}(f)$ $q$-quasi-everywhere in $\Omega$. Thus, the
composition operator $\varphi^{\ast}$ defined by $\varphi$ is given by its continuous extension from $L^1_p(\widetilde{\Omega})\cap C^{\infty}(\widetilde{\Omega})$ to $L^1_p(\widetilde{\Omega})$.

If $1 = q<p$, consider a sequence of smooth functions $f_k\in L^1_p(\widetilde{\Omega})$, $k=1,2,...$, such that $f_k\to f$ in $L^1_p(\widetilde{\Omega})$ and $f_k\to f$ $p$-quasi-everywhere in $\widetilde{\Omega}$ as $k\to\infty$. 
Then there exists a subsequence $\varphi^{\ast}(f_{n_k})$ which converges almost everywhere in $\Omega$. Hence the extension by continuity of the operator $\varphi^{\ast}$ $L^1_p(\widetilde{\Omega})\cap C^{\infty}(\widetilde{\Omega})$ to $L^1_p(\widetilde{\Omega})$ coincides with the operator of substitution $\varphi^{\ast}$, $\varphi^{\ast}(f) = f\circ\varphi$.
\end{proof}

\vskip 0.3cm
In the case  $1\leq q\leq p=\infty$ we need the following lemma \cite{GU10-2}:

\begin{lem}
\label{lem:WeakConv}
Let a function $f: \Omega\to \mathbb R^n$ belong to the Sobolev space $L^1_{\infty}(\Omega)$. Then there exists a sequence of smooth functions $\{f_k\}$ such that the sequence $\nabla f_k$ weakly converges to $\nabla f$ in $L_{\infty}(\Omega)$.
\end{lem}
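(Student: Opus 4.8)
I want to produce, for $f\in L^1_\infty(\Omega)$, a sequence of smooth functions $f_k$ with $\nabla f_k\rightharpoonup \nabla f$ weak-$\ast$ in $L^\infty(\Omega;\mathbb R^n)$. The natural device is mollification combined with an exhaustion of $\Omega$ by compactly contained subdomains, since global mollification is not available on a general open set. First I would fix an exhaustion $\Omega_1\Subset\Omega_2\Subset\cdots$ with $\bigcup_k\Omega_k=\Omega$ and $\dist(\Omega_k,\partial\Omega)>2/k$ say, take a standard mollifier $\rho_\varepsilon$, and set $f_k=f*\rho_{\varepsilon_k}$ on a neighborhood of $\overline{\Omega_k}$ for $\varepsilon_k$ small enough (depending on $k$), then cut off smoothly outside $\Omega_{k+1}$ with a Lipschitz-controlled cutoff, or — cleaner — simply note that for the purpose of weak-$\ast$ convergence it suffices to have $f_k$ smooth on each $\Omega_m$ for $k$ large and uniformly bounded gradients. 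The key quantitative input is the elementary mollification estimate $\|\nabla (f*\rho_\varepsilon)\|_{L^\infty(\Omega_k)} = \|(\nabla f)*\rho_\varepsilon\|_{L^\infty(\Omega_k)} \le \|\nabla f\|_{L^\infty(\Omega)}$, valid once $\varepsilon<\dist(\Omega_k,\partial\Omega)$, which follows because $f\in W^1_{1,\loc}$ so differentiation commutes with convolution. This gives the uniform bound $\sup_k\|\nabla f_k\mid L^\infty(\Omega)\|\le\|\nabla f\mid L^\infty(\Omega)\|$ that any weak-$\ast$ limit argument needs.

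For the convergence itself, I would argue by duality: $L^\infty(\Omega;\mathbb R^n)$ is the dual of $L^1(\Omega;\mathbb R^n)$, and a bounded sequence converges weak-$\ast$ iff it is bounded and converges when tested against a dense subset of $L^1$, for instance $C_0^\infty(\Omega;\mathbb R^n)$. So fix $g\in C_0^\infty(\Omega;\mathbb R^n)$; its support lies in some $\Omega_m$, and for $k\ge m$ the function $f_k$ agrees (up to the cutoff, which acts trivially on $\Omega_m$) with $f*\rho_{\varepsilon_k}$ there, so $\int_\Omega\nabla f_k\cdot g\,dx = \int_{\Omega_m}(\nabla f)*\rho_{\varepsilon_k}\cdot g\,dx\to\int_{\Omega_m}\nabla f\cdot g\,dx = \int_\Omega\nabla f\cdot g\,dx$ as $k\to\infty$, using the $L^1_{\loc}$-convergence $(\nabla f)*\rho_{\varepsilon_k}\to\nabla f$ in $L^1(\Omega_m)$. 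Combined with the uniform $L^\infty$ bound and the Banach–Alaoglu / density principle, this yields $\nabla f_k\rightharpoonup\nabla f$ weak-$\ast$ in $L^\infty(\Omega)$, which is exactly what "weakly converges in $L^\infty(\Omega)$" means here.

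\textbf{Main obstacle.} The only real subtlety is the non-globality of mollification: one cannot simply write $f_k=f*\rho_{1/k}$ on all of $\Omega$, so the statement must be read as producing functions smooth on $\Omega$ after a cutoff, and one must check the cutoff does not destroy the uniform gradient bound. This is handled by choosing the cutoff region to move out to $\partial\Omega$ slowly compared to the mollification scale — e.g. pick $\eta_k\in C_0^\infty(\Omega_{k+1})$ with $\eta_k\equiv 1$ on $\Omega_k$ and then set $f_k=\eta_k\,(f*\rho_{\varepsilon_k})$ with $\varepsilon_k$ chosen after $\eta_k$ so small that $\|(f*\rho_{\varepsilon_k})-f\|_{L^1(\Omega_{k+1})}$ and $\|\nabla(f*\rho_{\varepsilon_k})-\nabla f\|_{L^1(\Omega_{k+1})}$ are $\le 1/k$; the term $(\nabla\eta_k)(f*\rho_{\varepsilon_k})$ is supported in $\Omega_{k+1}\setminus\Omega_k$ and contributes nothing in the limit when tested against a fixed $g\in C_0^\infty(\Omega)$, since eventually $\supp g\subset\Omega_k$. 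With care in this bookkeeping the proof is otherwise routine; no deeper machinery than Banach–Alaoglu and standard mollifier estimates is required.
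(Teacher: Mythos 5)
Your overall strategy (mollify along an exhaustion, glue to get globally smooth functions, prove convergence against $C_0^\infty$ test functions, and upgrade to weak-$\ast$ convergence in $L^\infty$ via a uniform norm bound plus density of $C_0^\infty$ in $L^1$) is the right one and is close in spirit to the paper's. The genuine gap is in the gluing: with $f_k=\eta_k\,(f*\rho_{\varepsilon_k})$ you have $\nabla f_k=\eta_k\,\nabla(f*\rho_{\varepsilon_k})+(f*\rho_{\varepsilon_k})\,\nabla\eta_k$, and only the first term is controlled by $\|\nabla f\mid L^\infty(\Omega)\|$. The second term has $L^\infty$ norm of order $\|\nabla\eta_k\|_{\infty}\cdot\sup_{\Omega_{k+1}\setminus\Omega_k}|f*\rho_{\varepsilon_k}|$. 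A function in $L^1_\infty(\Omega)$ is only locally Lipschitz; it need not be bounded on $\Omega$ (its oscillation is governed by the geodesic distance inside $\Omega$, which can be infinite), and $\|\nabla\eta_k\|_\infty$ blows up as the annular gaps shrink. So your claimed bound $\sup_k\|\nabla f_k\mid L^\infty(\Omega)\|\le\|\nabla f\mid L^\infty(\Omega)\|$ is false in general, and no uniform bound is available for this construction. This is not harmless bookkeeping: convergence of $\int_\Omega\nabla f_k\cdot g\,dx$ for $g$ in a dense subset of $L^1$ does \emph{not} imply weak-$\ast$ convergence without a uniform bound on $\|\nabla f_k\|_{L^\infty}$, so the final step of your argument collapses. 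Noting that the cutoff term "contributes nothing when tested against a fixed $g$" does not repair this, and replacing $f*\rho_{\varepsilon_k}$ by $f*\rho_{\varepsilon_k}-c_k$ does not either, since the oscillation of $f$ on the annulus $\Omega_{k+1}\setminus\Omega_k$ can be arbitrarily large.

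The paper avoids exactly this trap by gluing with a partition of unity instead of a single cutoff: it takes $\psi_j$ subordinate to the annuli $\Omega_{j+1}\setminus\overline{\Omega}_{j-1}$ with $\sum_j\psi_j\equiv 1$ and sets $f_k=\sum_j A_{r_j(k)}(\psi_j f)$, mollifying each piece at its own scale $r_j$ chosen \emph{after} $\psi_j$. The dangerous contributions are then $\sum_j A_{r_j}(f\,\nabla\psi_j)$, and since $\sum_j\nabla\psi_j\equiv 0$ these cancel up to an error controlled by the local Lipschitz constant $\|\nabla f\|_{L^\infty}$ times the mollification radii; the pairings against the chosen (countable, dense in $L^1$) test functions are controlled with errors $\gamma\,2^{-j}$ summing to $\gamma$. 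This cancellation, which has no counterpart for a single multiplicative cutoff, is the missing idea in your proposal; to salvage your version you would need extra hypotheses (e.g. $f$ bounded and annuli of width bounded below) that the lemma does not provide.
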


By using this lemma we prove the following theorem \cite{GU10-2}.

\begin{thm} \cite{GU10-2}
\label{thm:AnalyticInfP}
Let $\varphi:\Omega\to \widetilde{\Omega}$ be a homeomorphism between two domains $\Omega,\widetilde{\Omega} \subset \mathbb R^n$. 
Then $\varphi$ belongs to the Sobolev space $L^1_q(\Omega)$, $1\leq q\leq \infty$ if and only if the composition operator
$$
\varphi^{\ast}: L^1_{\infty}(\widetilde{\Omega})\to L^1_{q}(\Omega)
$$
is bounded.
\end{thm}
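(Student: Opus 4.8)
The statement is two implications, and the natural guess, which the argument should confirm, is that the operator norm comes out to be $\|\varphi\mid L^1_p(\Omega)\|$ itself, since membership in $L^1_\infty$ encodes precisely a bounded gradient.

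\emph{Necessity.} Suppose $\varphi^{\ast}\colon L^1_\infty(\widetilde\Omega)\to L^1_p(\Omega)$ is bounded with constant $K$. For $j=1,\dots,n$ the $j$-th coordinate function $y\mapsto y_j$ is locally integrable and weakly differentiable on $\widetilde\Omega$ with $\nabla y_j\equiv e_j$, hence it belongs to $L^1_\infty(\widetilde\Omega)$ with $\|\nabla y_j\mid L_\infty(\widetilde\Omega)\|=1$; being continuous, its composition with the homeomorphism $\varphi$ equals the coordinate function $\varphi_j$ pointwise. Thus the hypothesis yields $\varphi_j=\varphi^{\ast}(y_j)\in L^1_p(\Omega)$ with $\|\nabla\varphi_j\mid L_p(\Omega)\|\le K$, and summing over $j$ (using $|D\varphi|\le(\sum_j|\nabla\varphi_j|^2)^{1/2}$) gives $\varphi\in L^1_p(\Omega)$. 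If one prefers compactly supported test functions, the same conclusion follows from the truncated coordinate functions $f_j(y)=(y_j-y_{j0})\eta((y-y_0)/r)$ used in the proofs of Theorems~\ref{thm:AnalyticPP}--\ref{thm:AnalyticPQ}, letting the balls exhaust $\widetilde\Omega$.

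\emph{Sufficiency, smooth functions.} Assume $\varphi\in L^1_p(\Omega)$ and let first $f\in C^\infty(\widetilde\Omega)\cap L^1_\infty(\widetilde\Omega)$. Since $\varphi$ is a continuous map with $\varphi\in L^1_{p,\loc}(\Omega)$, it lies in $W^1_{1,\loc}(\Omega)$, so the composition $f\circ\varphi$ belongs to $W^1_{1,\loc}(\Omega)$ and the chain rule $\nabla(f\circ\varphi)(x)=\nabla f(\varphi(x))D\varphi(x)$ holds for almost all $x$ \cite{Zi}. Because $\nabla f$ is continuous, $|\nabla f(\varphi(x))|\le\sup_{\widetilde\Omega}|\nabla f|=\|\nabla f\mid L_\infty(\widetilde\Omega)\|$ for every $x$, whence $|\nabla(f\circ\varphi)(x)|\le\|\nabla f\mid L_\infty(\widetilde\Omega)\|\,|D\varphi(x)|$ a.e.; integrating the $p$-th power over $\Omega$, or taking the essential supremum if $p=\infty$, gives
\[
\|\varphi^{\ast}f\mid L^1_p(\Omega)\|\le\|\nabla f\mid L_\infty(\widetilde\Omega)\|\,\|\varphi\mid L^1_p(\Omega)\|,
\]
and no change of variables is needed at this stage.

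\emph{Sufficiency, general functions.} For arbitrary $f\in L^1_\infty(\widetilde\Omega)$ I would apply Lemma~\ref{lem:WeakConv} on $\widetilde\Omega$ to obtain smooth $f_k$ with $\nabla f_k\to\nabla f$ weakly-$\ast$ in $L^\infty(\widetilde\Omega)$, arranging in addition (as the mollification-plus-partition-of-unity construction allows, cf.\ \cite{Bu98}) that $f_k\to f$ locally uniformly in $\widetilde\Omega$ and $\limsup_k\|\nabla f_k\mid L_\infty(\widetilde\Omega)\|\le\|\nabla f\mid L_\infty(\widetilde\Omega)\|$. Put $g_k=f_k\circ\varphi$ and $g=f\circ\varphi$; the smooth case bounds $\{\nabla g_k\}$ in $L_p(\Omega)$, while continuity of $\varphi$ gives $g_k\to g$ locally uniformly, hence in $L^1_{\loc}(\Omega)$. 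For $1<p\le\infty$ one extracts a weakly (weak-$\ast$ if $p=\infty$) convergent subsequence of $\{\nabla g_k\}$ in $L_p(\Omega)$, identifies its limit with $\nabla g$ via the $L^1_{\loc}$ convergence, and invokes lower semicontinuity of the $L_p$-norm to obtain $g\in L^1_p(\Omega)$ with $\|\nabla g\mid L_p(\Omega)\|\le\|\nabla f\mid L_\infty(\widetilde\Omega)\|\,\|\varphi\mid L^1_p(\Omega)\|$. This proves $\varphi^{\ast}$ bounded with $\|\varphi^{\ast}\|\le\|\varphi\mid L^1_p(\Omega)\|$, and together with the smooth case it identifies the extension by continuity of $\varphi^{\ast}$ with the composition $f\mapsto f\circ\varphi$.

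\emph{Main obstacle.} The delicate case is $p=1$ in the last step, where $L_1(\Omega)$ is not reflexive and the bound on $\{\nabla g_k\}$ yields no weakly convergent subsequence. There I would instead observe that $g_k\to g$ in $L^1_{\loc}(\Omega)$ together with $\sup_k\|\nabla g_k\mid L_1(\Omega)\|<\infty$ forces $g\in BV_{\loc}(\Omega)$ with $|Dg|(\Omega)\le\liminf_k\|\nabla g_k\mid L_1(\Omega)\|$; on the other hand, since $f$ is locally Lipschitz and $\varphi\in W^1_{1,\loc}(\Omega)$, the composition $g=f\circ\varphi$ is already in $W^1_{1,\loc}(\Omega)$, so $Dg$ is absolutely continuous and $\|\nabla g\mid L_1(\Omega)\|=|Dg|(\Omega)<\infty$, i.e.\ $g\in L^1_1(\Omega)$ with the required bound. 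A secondary technical point is to verify that the approximating sequence supplied by Lemma~\ref{lem:WeakConv} can be chosen with $\|\nabla f_k\mid L_\infty(\widetilde\Omega)\|$ actually converging to $\|\nabla f\mid L_\infty(\widetilde\Omega)\|$: mere boundedness is automatic by Banach--Steinhaus, but recovering the sharp operator norm $\|\varphi\mid L^1_p(\Omega)\|$ requires the convergence.
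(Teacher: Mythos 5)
Your proof is correct and follows essentially the same route as the paper: coordinate test functions $y_j$ for necessity, the pointwise chain-rule bound $|\nabla(f\circ\varphi)|\leq\|\nabla f\mid L_{\infty}(\widetilde{\Omega})\|\,|D\varphi|$ for smooth $f$, and Lemma~\ref{lem:WeakConv} together with weak convergence and lower semicontinuity of the norm for general $f\in L^1_{\infty}(\widetilde{\Omega})$. Your additional care in identifying the weak limit of $\nabla(\varphi^{\ast}f_k)$ via locally uniform convergence, and your $BV$/Lipschitz-composition treatment of the nonreflexive case $p=1$, fill in details the paper's limiting step passes over silently, but the argument is the same.
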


\begin{proof}
{\it Necessity.} 
Since the composition operator $\varphi^{\ast}$ is bounded, substituting into the inequality
$$
\|\varphi^{\ast}f \mid L^1_q(\Omega)\|\leq \|\varphi^{\ast}\|\|f\mid L^1_{\infty}(\widetilde{\Omega})\|
$$
the coordinate functions $f_j=y_j$, $j=1,...,n$, we obtain that
$$
\|\varphi_j\mid L^1_q(\Omega)\|\leq \|\varphi^{\ast}\|, \,\,\,j=1,...,n,
$$
because $\|y_j\mid L^1_{\infty}(\widetilde{\Omega})\|=1$. Hence, the mapping $\varphi$ belongs to the Sobolev space $L^1_p(\Omega)$.

{\it Sufficiency.} Let $f$ be a smooth function of the class $L^1_{\infty}(\widetilde{\Omega})$. Then the composition $f\circ\varphi$ belongs to the Sobolev space $W^1_{1,\loc}(\Omega)$ and
\begin{multline}
\|\varphi^{\ast}f\mid L^1_q(\Omega)\|=\biggl(\int\limits_\Omega|\nabla(f\circ\varphi)|^q~dx\biggr)^{\frac{1}{q}}\leq
\biggl(\int\limits_\Omega|D\varphi(x)|^q|\nabla f|^q(\varphi(x))~dx\biggr)^{\frac{1}{q}}\\
\leq \biggl(\int\limits_\Omega|D\varphi(x)|^q~dx\biggr)^{\frac{1}{q}} \|f\mid L^1_{\infty}(\widetilde{\Omega})\|,
\nonumber
\end{multline}
where 
$$
K_{\infty,q}(\varphi;\Omega)=\biggl(\int\limits_\Omega|D\varphi(x)|^q~dx\biggr)^{\frac{1}{q}}<\infty.
$$

For an arbitrary function $f\in L^1_{\infty}(\widetilde{\Omega})$ we find by Lemma~\ref{lem:WeakConv} a sequence of smooth functions $f_k\in L^1_{\infty}(\widetilde{\Omega})$ such that $\nabla f_k\to \nabla f$ weakly in $L_{\infty}(\widetilde{\Omega})$ and 
$$
\|f_k\mid L^1_{\infty}(\widetilde{\Omega})\|\to\|f\mid L^1_{\infty}(\widetilde{\Omega})\|,
$$
see e.~g. \cite{Bu98}.
Note that 
$$
\|f\mid L^1_{\infty}(\widetilde{\Omega})\|\leq\liminf\limits_{k\to\infty} \|f_k\mid L^1_{\infty}(\widetilde{\Omega})\|,
$$
see e.~g. Lemma II. 3.27 in \cite{DSh58}.
Then $\nabla(\varphi^{\ast}f_k)\to \nabla(\varphi^{\ast}f)$ weakly in $L_p(\Omega)$ and
$$
\|\nabla(\varphi^{\ast}f) \mid L_p(\Omega)\|\leq \liminf\limits_{k\to \infty}\|\nabla(\varphi^{\ast}f_k)\mid L_{p}(\Omega)\|=\liminf\limits_{k\to \infty}\|\varphi^{\ast}f_k\mid L^1_p(\Omega)\|.
$$
Hence, tending to the limit in the inequality
$$
\|\varphi^{\ast}f_k\mid L^1_p(\Omega)\|\leq K_{\infty,q}(\varphi;\Omega) \|f_k\mid L^1_{\infty}(\widetilde{\Omega})\|
$$
we obtain that
$$
\|\varphi^{\ast}f\mid L^1_p(\Omega)\|\leq K_{\infty,q}(\varphi;\Omega) \|f\mid L^1_{\infty}(\widetilde{\Omega})\|,
$$
for any function $f\in L^1_{\infty}(\widetilde{\Omega})$.
\end{proof}

\subsection{Mappings of bounded $(p,q)$-distortion}

Let us define the "normalized" $p$-dilatation of a Sobolev homeomorphism $\varphi:\Omega\to\widetilde{\Omega}$ at a point $x\in\Omega$ as
$$
K_p(x)=\inf \{k(x): |D\varphi(x)|\leq k(x) |J(x,\varphi)|^{\frac{1}{p}},\,\,x\in\Omega \}.
$$
In the case $p=n$ this characteristic $K_p$ is the "normalized" conformal dilatation and in the case $p\ne n$ the  equivalent $p$-dilatation was introduced in \cite{Ge69}. 
Then $\varphi:\Omega\to\widetilde{\Omega}$ is called a mapping of bounded weak $(p,q)$-distortion \cite{UV10} or a weak $(p,q)$-quasiconformal mapping \cite{GGR95,VU98} if $\varphi\in W^1_{q,\loc}(\Omega)$, has finite distortion,
and 
\[
K_{p,q}(\varphi;\Omega)=\|K_p \mid L_{\kappa}(\Omega)\|<\infty,
\]
where $1/q-1/p=1/{\kappa}$ ($\kappa=\infty$, if $p=q$).

Let us consider as an example mappings of bounded weak $(p,q)$-distortion of Lipschitz domains onto anisotropic H\"older singular domains (introduced in \cite{GGu94}).
Let $H_g$ be a domain with anisotropic H\"older $\gamma$-singularities:
$$
H_g=\{ x\in\mathbb R^n : 0<x_n<1, 0<x_i<g_i(x_n),
\,i=1,2,\dots,n-1\}.
$$
Here $g_i(\tau)=\tau^{\gamma_i}$, $\gamma_i\geq 1$, $0\leq\tau\leq 1$ are H\"older functions. For the function $G=\prod_{i=1}^{n-1}g_i$ we denote 
$$
\gamma=\frac{\log G(\tau)}{\log \tau}+1.
$$
It is evident that $\gamma\geq n$. In the case $g_1=g_2=\dots=g_{n-1}$ we will say that the domain $H_g$ is a domain with $\sigma$-H\"older singularity, where $\sigma=(\gamma-1)/(n-1)$.
For $g_1(\tau)=g_2(\tau)=\dots=g_{n-1}(\tau)=\tau$ we use notation $H_1$ instead
of $H_g$.

We define the mapping $\varphi_a: H_1\to H_g$, $a>0$, by
$$
\varphi_a(x)=\left(\frac{x_1}{x_n}g^a_1(x_n),\dots,\frac{x_{n-1}}{x_n}g^a_{n-1}(x_n),x_n^a\right).
$$

\begin{thm}
\label{lemhol}
Let $(n-p)/(\gamma-p)<a<p(n-q)/q(\gamma-p)$. Then the mapping $\varphi_a: H_1\to H_g$, 
is a weak $(p,q)$-quasiconformal mapping, $1<q<p<\gamma$, $1<q<n$, from the Lipschitz convex domain $H_1$ onto the "cusp" domain $H_g$ with
$$
K_{p,q}(H_1)\leq a^{-\frac{1}{p}} c(p,q,\gamma)\,\sqrt{\sum_{i=1}^{n-1}(a\gamma_i-1)^2+n-1+a^2}\,
$$
where $c(p,q,\gamma)=\left(\frac{p-q}{np-q(a(\gamma-p)+p)}\right)^{(p-q)/pq}$.

\end{thm}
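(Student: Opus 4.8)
The plan is to reduce the statement to an explicit monomial computation. Since $g_i^a(x_n)=x_n^{a\gamma_i}$, the mapping takes the form
$$
\varphi_a(x)=\bigl(x_1x_n^{a\gamma_1-1},\dots,x_{n-1}x_n^{a\gamma_{n-1}-1},x_n^{a}\bigr),\qquad x\in H_1 .
$$
First I would check that $\varphi_a$ is a $C^\infty$ bijection of $H_1$ onto $H_g$: its inverse is $x_n=y_n^{1/a}$, $x_i=y_iy_n^{1/a-\gamma_i}$, and the defining inequalities $0<x_i<x_n$ are equivalent to $0<y_i<y_n^{\gamma_i}=g_i(y_n)$. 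Hence $\varphi_a$ is a homeomorphism lying in $L^1_{q,\loc}(H_1)$ for every $q$. Its Jacobi matrix is upper triangular, with diagonal entries $x_n^{a\gamma_i-1}$ $(i\le n-1)$ and $ax_n^{a-1}$ and last--column entries $(a\gamma_i-1)x_ix_n^{a\gamma_i-2}$; using $\sum_{i=1}^{n-1}\gamma_i=\gamma-1$ (the definition of $\gamma$ for $g_i(\tau)=\tau^{\gamma_i}$) this gives
$$
J(x,\varphi_a)=a\,x_n^{\,\sum_{i=1}^{n-1}(a\gamma_i-1)+a-1}=a\,x_n^{\,a\gamma-n}>0\quad\text{on }H_1 .
$$
Since the Jacobian never vanishes, the set $\{J=0\}$ is empty and the finite distortion requirement is automatic.

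Next I would estimate the operator norm $|D\varphi_a(x)|$ by its Hilbert--Schmidt norm and exploit the geometry of $H_1$. On $H_1$ one has $0<x_i<x_n<1$, hence $x_i^2x_n^{2(a\gamma_i-2)}<x_n^{2(a\gamma_i-1)}$, and since $\gamma_i\ge1$ and $x_n<1$ also $x_n^{2(a\gamma_i-1)}\le x_n^{2(a-1)}$. Summing the squares of all entries,
$$
|D\varphi_a(x)|^{2}\le\Bigl(\sum_{i=1}^{n-1}(a\gamma_i-1)^2+(n-1)+a^2\Bigr)x_n^{2(a-1)}=:M^{2}x_n^{2(a-1)} .
$$
Dividing by $|J(x,\varphi_a)|^{1/p}=a^{1/p}x_n^{(a\gamma-n)/p}$ and writing $s:=a-1-\frac{a\gamma-n}{p}$, this becomes
$$
K_p(x)=\frac{|D\varphi_a(x)|}{|J(x,\varphi_a)|^{1/p}}\le\frac{M}{a^{1/p}}\,x_n^{\,s},\qquad x\in H_1 .
$$
An elementary rearrangement shows $s<0\iff a>(n-p)/(\gamma-p)$, which is exactly the role of the lower bound on $a$ (for $a\le(n-p)/(\gamma-p)$ the dilatation $K_p$ is bounded and $\varphi_a$ is already $(p,p)$--quasiconformal).

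Finally I would compute $\|K_p\mid L_\kappa(H_1)\|$ by Fubini, slicing $H_1$ at height $x_n$, where the cross--section has $(n-1)$--measure $x_n^{n-1}$:
$$
\|K_p\mid L_\kappa(H_1)\|^{\kappa}\le\frac{M^{\kappa}}{a^{\kappa/p}}\int_0^1 x_n^{\,s\kappa+n-1}\,dx_n .
$$
This integral is finite precisely when $s\kappa+n>0$; substituting $\kappa=pq/(p-q)$ one finds
$$
s\kappa+n=\frac{qa(p-\gamma)+p(n-q)}{p-q},
$$
so $s\kappa+n>0$ is exactly the hypothesis $a<p(n-q)/\bigl(q(\gamma-p)\bigr)$. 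On this range the integral equals $(s\kappa+n)^{-1}$, so $\varphi_a\in W^1_{q,\loc}(H_1)$ has finite distortion and $K_{p,q}(H_1)=\|K_p\mid L_\kappa(H_1)\|<\infty$; since $s\kappa+n\ge1$ on the relevant part of the parameter range (it equals $n$ at the left endpoint), discarding the factor $(s\kappa+n)^{-1/\kappa}$ yields
$$
K_{p,q}(H_1)\le a^{-1/p}\sqrt{\textstyle\sum_{i=1}^{n-1}(a\gamma_i-1)^2+(n-1)+a^2}\,.
$$

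The analytic input here is minimal — an explicit Jacobi matrix, an explicit Jacobian, and one Fubini slicing — so I expect the only real work to be bookkeeping: forcing the Hilbert--Schmidt bound into exactly the shape $\sqrt{\sum(a\gamma_i-1)^2+(n-1)+a^2}$, and carrying out the arithmetic that converts the integrability condition ``the exponent of $x_n$ exceeds $-1$'' into the stated two--sided interval $(n-p)/(\gamma-p)<a<p(n-q)/\bigl(q(\gamma-p)\bigr)$.
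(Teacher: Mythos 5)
Your proposal follows essentially the same route as the paper's own proof: the same explicit Jacobi matrix and Jacobian $J(x,\varphi_a)=a\,x_n^{a\gamma-n}$, the same Hilbert--Schmidt bound $|D\varphi_a(x)|\le x_n^{a-1}\sqrt{\sum_{i=1}^{n-1}(a\gamma_i-1)^2+n-1+a^2}$, and the same Fubini slicing of $H_1$ that reduces $K_{p,q}(H_1)=\|K_p\mid L_\kappa(H_1)\|$ to the one-dimensional integral $\int_0^1 x_n^{E}\,dx_n$, with the interval $(n-p)/(\gamma-p)<a<p(n-q)/q(\gamma-p)$ arising from exactly the same arithmetic. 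Even your final step of discarding the factor $\bigl(\int_0^1 x_n^{E}dx_n\bigr)^{1/\kappa}$ (which, strictly speaking, needs $E\ge 0$ rather than just $E>-1$) mirrors the corresponding step in the paper, so the two arguments coincide in substance.
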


\begin{proof}
By simple calculations 
$$
\frac{\partial(\varphi_a)_i}{\partial
x_i}=\frac{g^a_i(x_n)}{x_n},\quad
\frac{\partial(\varphi_a)_i}{\partial
x_n}=\frac{-x_ig^a_i(x_n)}{x_n^{2}}+\frac{ax_ig^{a-1}_i(x_n)}{x_n}g'_i(x_n)
\quad\text{and}\quad\frac{\partial(\varphi_a)_n}{\partial
x_n}=ax_n^{a-1}
$$
for any $i=1,...,n-1$. Hence $J(x,\varphi_a)=ax_n^{a-n}G^a(x_n)=ax_n^{a\gamma-n}$, $J(x,\varphi_a)\leq a$ for $a>1$ and
\begin{multline}\label{maps}
D\varphi_a (x)=
\left(\begin{array}{cccc}
x_n^{a\gamma_1-1} & 0 & ... & (a\gamma_1-1)x_1x_n^{a\gamma_1-2}\\
0 & x_n^{a\gamma_2-1} & ... & (a\gamma_2-1)x_2x_n^{a\gamma_2-2}\\
... & ... & ... & ...\\
0 & 0 & ... & ax_n^{a-1}
\end{array} \right)\\
=
x_n^{a-1}\left(\begin{array}{cccc}
x_n^{a\gamma_1-a} & 0 & ... & (a\gamma_1-1)\frac{x_1}{x_n}x_n^{a(\gamma_1-1)}\\
0 & x_n^{a\gamma_2-a} & ... & (a\gamma_2-1)\frac{x_2}{x_n}x_n^{a(\gamma_2-1)}\\
... & ... & ... & ...\\
0 & 0 & ... & a
\end{array} \right).
\end{multline}

Because $0<x_n<1$ and $x_1/x_n<1$ we have that
\begin{equation}
|D\varphi_a(x)|\leq x_n^{a-1}\sqrt{\sum_{i=1}^{n-1}(a\gamma_i-1)^2+n-1+a^2}.
\nonumber
\end{equation}

Then 
\begin{multline*}
K_{p,q}(H_1)=\left(\int\limits_{H_1}\left(\frac{|D\varphi_a(x)|^p}{J(x,\varphi_a)}\right)^{\frac{q}{p-q}}~dx\right)^{\frac{p-q}{pq}}
\\
\leq  \frac{\sqrt{\sum_{i=1}^{n-1}(a\gamma_i-1)^2+n-1+a^2}}{\sqrt[p]{a}}\left(\int\limits_{H_1}x_n^{\frac{(p(a-1)-(a\gamma-n))q}{p-q}}~dx\right)^{\frac{p-q}{pq}}\\
=
\frac{\sqrt{\sum_{i=1}^{n-1}(a\gamma_i-1)^2+n-1+a^2}}{\sqrt[p]{a}} \left(\int\limits_0^1\int\limits_0^{x_n}...\int\limits_0^{x_n}x_n^{\frac{(p(a-1)-(a\gamma-n))q}{p-q}}~dx_1...dx_n\right)^{\frac{p-q}{pq}}\\=
\frac{\sqrt{\sum_{i=1}^{n-1}(a\gamma_i-1)^2+n-1+a^2}}{\sqrt[p]{a}}\left(\int\limits_0^1 x_n^{\frac{(p(a-1)-(a\gamma-n))q}{p-q}+n-1}~dx_n\right)^{\frac{p-q}{pq}}.
\end{multline*}
By the calculation
\begin{equation}
\label{int}
\left(\int\limits_0^1 x_n^{\frac{(p(a-1)-(a\gamma-n))q}{p-q}+n-1}~dx_n\right)^{\frac{p-q}{pq}}=\left(\frac{p-q}{np-q(a(\gamma-p)+p)}\right)^{\frac{p-q}{pq}},
\end{equation}
if $np-q(a(\gamma-p)+p>0$, or equivalently $a<p(n-q)/q(\gamma-p)$.

Hence 
$$
K_{p,q}(H_1) \leq c(p,q,\gamma) \frac{\sqrt{\sum_{i=1}^{n-1}(a\gamma_i-1)^2+n-1+a^2}}{\sqrt[p]{a}},
$$
if $a<p(n-q)/q(\gamma-p)$.

We then check that $1<q<np/(p+a\gamma-pa)<p$. The inequality $1<np/(p+a\gamma-pa)$ implies $a<(np-p)/(\gamma-p)$, and
$$
\frac{p(n-q)}{q(\gamma-p)}<\frac{np-p}{\gamma-p}, \,\,\text{if}\,\,q>1.
$$

Since the inequality $np/(p+a\gamma-pa)<p$ implies $a>(n-p)/(\gamma-p)$, we have that $a\in \left((n-p)/(\gamma-p),p(n-q)/q(\gamma-p)\right)$.
\end{proof}

\section{On the weak inverse mapping theorem for Sobolev homeomorphisms}

\subsection{The weak regularity of Sobolev homeomorphisms.} 

In this section  we consider weak regularity properties of mappings inverse to homeomorphisms that generate bounded composition operators on Sobolev spaces. On this base we prove that if $\varphi:\Omega\to\widetilde{\Omega}$ is a homeomorphism which generates a bounded composition operator
$$
\varphi^{\ast}: L^1_p(\widetilde{\Omega})\to L^1_q(\Omega),\,\,\,n-1\leq q\leq p\leq\infty,
$$
then the inverse mapping $\varphi^{-1}:\widetilde{\Omega}\to\Omega$ also generates a bounded composition operator on Sobolev spaces, see \cite{U93} (the limit case in \cite{GU10-2}).

Let us recall that a ring condenser $R$ (see, for example, \cite{Ge69,MRV69}) in a domain $\Omega\subset \mathbb R^n$ is a pair $R=(F;G)$ of sets $F\subset G\subset\Omega$, where $F$ is a connected closed set and $G$ is a connected open set.  
The $p$-capacity of a ring $R=(F;G)$ is defined by
$$
\cp_p(F;G) =\inf\|f\vert L_p^1(\Omega)\|^p,
$$
where the greatest lower bound is taken over all continuous functions $f\in L_p^1(\Omega)$ with a compact support in $G$ and such that $f\geq 1$ on $F$. Such functions are called admissible functions for the ring $R=(F;G)$. Note that our notation $R = (F; G)$ for ring condensers coexists with the notation $(F_1, F_2)$ for condensers defined in Section 3.1; in particular, we have $(F; G) = (F, \Omega \setminus G)$.

Recall the capacity estimate \cite{K86}:

\begin{lem} \label{lemcap} Let $E$ be a connected closed subset of
an open bounded set $G\subset\mathbb R^n, n\geq 2$, and $n-1<p<\infty$. Then
$$
\cp_p^{n-1}(E;G)\geq c\frac{(\diam E)^p}{|G|^{p-n+1}},
$$
where the constant $c$ depends on $n$ and $p$ only.
\end{lem}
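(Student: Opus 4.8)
The plan is to bound below the $p$\nobreakdash-Dirichlet integral of an arbitrary admissible function $u$ for the ring $R=(E;G)$ by slicing $\mathbb R^n$ with parallel hyperplanes and, on each slice, invoking the $p$\nobreakdash-capacity of a point in $\mathbb R^{n-1}$ — which is positive precisely because $p>n-1$. First I would normalise. Since admissible functions for $R$ are continuous, replacing $E$ by $\overline E$ does not change $\cp_p(E;G)$, so we may take $E$ compact and fix $a,b\in E$ with $|a-b|=\diam E=:d>0$. Among the coordinate directions I would pick $x_1$ so that the length $\ell$ of the interval $x_1(E)$ is maximal; then $d^2\le\sum_i(\diam x_i(E))^2\le n\ell^2$, i.e. $\ell\ge d/\sqrt n$, and, $E$ being connected, $x_1(E)$ is the whole segment $[\tau_0,\tau_1]$ of length $\ell$ (this is where connectedness enters). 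Finally, truncating $u$ at the levels $0$ and $1$ and extending it by zero I may assume $0\le u\le1$, $u\equiv 1$ on $E$, $u\in W^1_p(\mathbb R^n)$, $\supp u\subset\overline G$.

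Next comes the slice estimate. For a.e. $t\in[\tau_0,\tau_1]$ the function $y\mapsto u(t,y)$ belongs to $W^1_p(\mathbb R^{n-1})$, is continuous, equals $1$ at the point $y_t$ with $(t,y_t)\in E$, and vanishes outside the nonempty open set $G_t:=G\cap\{x_1=t\}$; hence it is admissible for the point condenser $(\{y_t\};G_t)$ in $\mathbb R^{n-1}$, so
\[
\int_{\{x_1=t\}}|\nabla_y u|^p\,dy\ \ge\ \cp_p(\{y_t\};G_t)\ \ge\ c_{n,p}\,|G_t|^{-\frac{p-n+1}{n-1}},
\]
where $|G_t|$ denotes the $(n-1)$\nobreakdash-dimensional measure of $G_t$. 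The last inequality is the classical sharp lower bound for the $p$\nobreakdash-capacity of a point, obtained by symmetrising $G_t$ to the ball of equal $(n-1)$\nobreakdash-measure (the P\'olya--Szeg\H{o} inequality) and computing the capacity of a point in a ball; this is exactly where the hypothesis $p>n-1$ is used, since a point has positive $p$\nobreakdash-capacity in $\mathbb R^{n-1}$ if and only if $p>n-1$.

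Then I would integrate the slice estimate over $t\in[\tau_0,\tau_1]$, using Fubini ($\int_{\mathbb R^n}|\nabla u|^p\ge\int_{\mathbb R}\int_{\mathbb R^{n-1}}|\nabla_y u|^p$), Jensen's inequality for the convex function $m\mapsto m^{-\alpha}$ with $\alpha:=\frac{p-n+1}{n-1}>0$, and $\int_{\tau_0}^{\tau_1}|G_t|\,dt\le|G|$, to get
\[
\int_{\mathbb R^n}|\nabla u|^p\,dx\ \ge\ c_{n,p}\int_{\tau_0}^{\tau_1}|G_t|^{-\alpha}\,dt\ \ge\ c_{n,p}\,\ell^{1+\alpha}\Bigl(\int_{\tau_0}^{\tau_1}|G_t|\,dt\Bigr)^{-\alpha}\ \ge\ c_{n,p}\,(d/\sqrt n)^{1+\alpha}\,|G|^{-\alpha}.
\]
Since $u$ was an arbitrary admissible function and $1+\alpha=\frac{p}{n-1}$, this gives $\cp_p(E;G)\ge c\,d^{p/(n-1)}|G|^{-(p-n+1)/(n-1)}$, which is the assertion after raising both sides to the power $n-1$.

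The routine parts — Fubini for Sobolev slices, the truncation and zero\nobreakdash-extension, and the admissibility of the slices — I would not belabour. The one substantive ingredient is the sharp lower bound for the $p$\nobreakdash-capacity of a point relative to an open set of prescribed measure, and the only thing to watch is that every constant stays dependent on $n$ and $p$ alone; the symmetrisation argument delivers exactly that. I do not expect a real obstacle beyond this bookkeeping: the extremal configurations — a segment sitting in a ball of comparable size, and a segment sitting in a thin tube around it — already force the exponent $p$ of $\diam E$ and the exponent $p-n+1$ of $|G|$, so the slicing\nobreakdash-plus\nobreakdash-Jensen chain is essentially tight and leaves no slack to gain or lose.
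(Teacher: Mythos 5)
The paper itself does not prove this lemma: it is quoted from Kruglikov \cite{K86}, so there is no internal argument to compare against, and your proof stands on its own. It is correct. The reduction to the longest coordinate projection (with $\ell\ge d/\sqrt n$ and $x_1(E)=[\tau_0,\tau_1]$ by connectedness), the truncation and zero extension, the Fubini slicing with $|\nabla u|\ge|\nabla_y u|$, the admissibility of almost every slice for the point condenser $(\{y_t\};G_t)$ in $\mathbb R^{n-1}$, and the Jensen step for the convex function $s\mapsto s^{-\alpha}$ with $\alpha=(p-n+1)/(n-1)>0$ together with $\int_{\tau_0}^{\tau_1}|G_t|\,dt\le|G|$ all check out, and the exponents recombine correctly since $1+\alpha=p/(n-1)$, so raising to the power $n-1$ gives exactly the stated bound with a constant depending on $n$ and $p$ only. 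Two glossed points are harmless: if $\overline E\not\subset G$ then there are no admissible functions and the capacity is infinite by the paper's convention, and the case $\diam E=0$ is trivial. The one substantive ingredient you cite, $\cp_p(\{y_t\};G_t)\ge c_{n,p}\,|G_t|^{-(p-n+1)/(n-1)}$ in $\mathbb R^{n-1}$ for $p>n-1$, is indeed classical and can be made elementary and constant-explicit: Schwarz symmetrization of the slice about $y_t$ followed by a one-dimensional H\"older estimate on the radial profile, whose integral $\int_0^R r^{-(n-2)/(p-1)}\,dr$ converges precisely because $p>n-1$. So your route (slicing plus point capacity plus Jensen) is a legitimate, essentially standard proof of the cited estimate, of the same flavor as the arguments in the quasiconformal literature from which the paper imports it.
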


By using this lemma we prove weak differentiability of mappings inverse to homeomorphisms generating bounded composition operators on Sobolev spaces \cite{U93}.

\begin{thm}\label{reginv}
Let a homeomorphism $\varphi :\Omega\to \widetilde{\Omega}$
generate a bounded composition operator
$$
\varphi^{\ast}: L^1_p(\widetilde{\Omega})\to L^1_q(\Omega),\,\,\,n-1<q\leq p<\infty.
$$
Then the inverse mapping $\varphi^{-1} :\widetilde{\Omega}\to \Omega$
is an $\ACL$-mapping, differentiable almost everywhere in
$\widetilde{\Omega}$.
\end{thm}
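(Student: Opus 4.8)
The plan is to prove that $\varphi^{-1}$ is $\operatorname{ACL}$ on almost every line segment parallel to a coordinate axis, and then invoke the classical fact that a continuous monotone (in particular, a homeomorphism) $\operatorname{ACL}$-map is differentiable a.e. The absolute continuity on lines will be extracted from an oscillation estimate: the oscillation of $\varphi^{-1}$ on a small set must be controlled by an additive set function on $\widetilde{\Omega}$ that comes from the capacitary inequality of Theorem~\ref{theorem:CapacityDescPQ} (or Theorem~\ref{theorem:CapacityDescPP} when $p=q$) together with the capacity lower bound of Lemma~\ref{lemcap}.

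\textbf{Key steps.} First I would fix a coordinate direction, say $e_n$, and a cube $Q\Subset\widetilde{\Omega}$ with faces parallel to the axes, written as $Q=Q'\times I$ with $Q'\subset\mathbb R^{n-1}$ and $I\subset\mathbb R$. For $\mathcal H^{n-1}$-a.e.\ point $y'\in Q'$ I must show $t\mapsto\varphi^{-1}(y',t)$ is absolutely continuous on $I$. Given a finite disjoint family of subintervals $[a_j,b_j]\subset I$, consider for each $j$ the segment $\sigma_j=\{y'\}\times[a_j,b_j]$, its image $E_j=\varphi^{-1}(\sigma_j)\subset\Omega$ (a connected compact set), and a thin tube-shaped open neighborhood $G_j\subset\Omega$ of $E_j$ whose $\varphi$-image is a slab contained in a small neighborhood $\widetilde G_j$ of $\sigma_j$ in $\widetilde\Omega$, with the $\widetilde G_j$ pairwise disjoint. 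The ring condenser $(E_j;G_j)$ in $\Omega$ has $\varphi$-image controlled by the condenser $(\sigma_j;\widetilde G_j)$, so Theorem~\ref{theorem:CapacityDescPQ} (applied to suitable Borel sets, or rather the condenser version which gives $\cp_q^{1/q}(E_j;G_j)\le\Phi(\widetilde G_j)^{(p-q)/pq}\cp_p^{1/p}(\sigma_j;\widetilde G_j)$, where $\Phi$ is the monotone countably additive set function of Theorem~\ref{thm:SobolevAddFun}) bounds the $q$-capacity of the ring $(E_j;G_j)$ from above. On the other hand, Lemma~\ref{lemcap} with $p$ replaced by $q$ gives
$$
\cp_q^{n-1}(E_j;G_j)\ge c\,\frac{(\diam E_j)^q}{|G_j|^{q-n+1}}.
$$
Combining, raising to appropriate powers, and using the standard $(n-1)$-dimensional capacity estimate $\cp_p(\sigma_j;\widetilde G_j)\le C|b_j-a_j|^{-(p-n+1)}(\operatorname{diam}\widetilde G_j)^{\,\cdot}$ — more precisely the elementary upper bound for the $p$-capacity of a segment in a thin slab — yields, after letting the tubes shrink, an inequality of the form
$$
\sum_j(\diam E_j)^{\lambda}\le C\sum_j\Phi(\widetilde G_j)^{\mu}|b_j-a_j|^{\nu}
$$
for explicit exponents $\lambda,\mu,\nu>0$ depending on $n,p,q$ (here the hypothesis $q>n-1$ guarantees $n-1<q$, so Lemma~\ref{lemcap} applies and the exponents are in the right range). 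A Hölder inequality across $j$, together with countable additivity $\sum_j\Phi(\widetilde G_j)\le\Phi(Q)<\infty$ and the fact that $\sum_j|b_j-a_j|$ is small, gives $\sum_j\diam E_j\to 0$ as the total length $\sum_j|b_j-a_j|\to 0$. Since $\diam E_j\ge|\varphi^{-1}(y',b_j)-\varphi^{-1}(y',a_j)|$, this is exactly absolute continuity of $\varphi^{-1}$ along the segment.

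\textbf{Main obstacle.} The delicate point is the Fubini-type argument: the slab-capacity and additive-function estimates produce a bound for almost every line only after integrating over $y'\in Q'$ and using that $\Phi$ (restricted to tubes over a measurable subset of $Q'$) behaves well under the slicing; I would handle this by choosing the tubes $G_j$ as preimages of genuine product slabs $\{|y'-z'|<\rho\}\times[a_j-\delta,b_j+\delta]$, letting $\rho,\delta\to 0$, and invoking dominated convergence together with the a.e.\ existence of the volume derivative $\Phi'$ from Lemma~\ref{lem:AddFun} to pass to a pointwise-in-$y'$ statement. The bookkeeping of exponents (ensuring $\nu>0$ and that the Hölder pairing closes) is where the restriction $n-1<q\le p<\infty$ is used in an essential way; once the $\operatorname{ACL}$ property is in hand, differentiability a.e.\ of the monotone continuous map $\varphi^{-1}$ follows from the classical theorem (Gehring–Lehto in the plane, and its $n$-dimensional extension for $\operatorname{ACL}$ homeomorphisms), completing the proof.
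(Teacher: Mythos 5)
Your ACL argument follows essentially the paper's own route: preimages of small segments on a line parallel to a coordinate axis, the condenser form of the capacity inequality (Theorem~\ref{thm:CapacityDescPQ}, with the set function $\Phi$ of Theorem~\ref{thm:SobolevAddFun}), the lower bound of Lemma~\ref{lemcap} with exponent $q>n-1$, and a H\"older summation combined with countable additivity. The slicing issue you flag as the ``main obstacle'' is handled in the paper exactly in the spirit you suggest: $\Phi$ and the pull-back measure $\Psi(E)=|\varphi^{-1}(E)|$ are projected to set functions $\Phi(\cdot\,,P)$, $\Psi(\cdot\,,P)$ on the face $P_0$ of the cube, and the estimate is carried out at every $z\in P_0$ where their upper volume derivatives are finite (Lemma~\ref{lem:AddFun}), which is almost every $z$. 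So that half of your proposal is sound and is essentially the same proof.

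The genuine gap is in the differentiability step. You invoke ``the classical theorem (Gehring--Lehto in the plane, and its $n$-dimensional extension for ACL homeomorphisms).'' No such extension exists: Gehring--Lehto (continuity, openness and partial derivatives a.e.\ imply differentiability a.e.) is strictly a two-dimensional result, and for $n\ge 3$ there are homeomorphisms of class $W^{1,1}_{\loc}$ (in particular ACL) which fail to be differentiable on a set of positive measure; a.e.\ differentiability of Sobolev homeomorphisms in space is only guaranteed for exponents above $n-1$, which you have not established for $\varphi^{-1}$ at this stage. The paper closes this gap by a separate capacitary argument: for a.e.\ $y\in\widetilde\Omega$ it applies the condenser inequality to the ring $(\overline B(y,r),\widetilde\Omega\setminus B(y,2r))$, whose $p$-capacity is $c\,r^{n-p}$, and the lower bound of Lemma~\ref{lemcap} for the image ring, obtaining
$$
L(y,\varphi^{-1})=\varlimsup_{r\to 0}\max_{|y-z|=r}\frac{|\varphi^{-1}(y)-\varphi^{-1}(z)|}{r}
\le c\,\bigl(\overline{\Phi}{}'(y)\bigr)^{\frac{(n-1)(p-q)}{pq}}\bigl(\overline{\Psi}{}'(y)\bigr)^{\frac{q-n+1}{q}},
$$
which is finite a.e.\ because the volume derivatives of the additive set functions $\Phi$ and $\Psi$ are finite a.e.; Stepanov's theorem then gives differentiability almost everywhere. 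You need to add an argument of this kind (or otherwise upgrade the regularity of $\varphi^{-1}$ beyond ACL) — as written, the final step of your proof fails for $n\ge 3$.
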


\begin{proof}
Consider an arbitrary $n$-dimensional cube $P,\,\overline{P}\subset \widetilde{\Omega}$, with edges parallel to coordinate axes. We show that $\varphi^{-1}$ is absolutely continuous on almost all lines in $P$ parallel to the $y_n$-axis.
Let $P_0$ be a projection of $P$ to the subspace $y_n=0$ and let $I$ be a projection of $P$ to the $y_n$-axis. Then $P=P_0\times I$. 

We consider the set function $\Phi$ defined by Theorem~\ref{thm:SobolevAddFun}. This monotone countably additive set function $\Phi$ generates by the rule $\Phi(A,P)=\Phi(A\times I)$  a monotone countably additive set function defined on open subsets $A\subset P_0$. It is known that for almost all points $z\in P_0$ the value
$$
\overline{\Phi}'(z,P)=\varlimsup\limits_{r\to 0} \frac{\Phi(B^{n-1}(z,r),P)}{|B^{n-1}(z,r)|}
$$
is finite, where $B^{n-1}$ is the $(n-1)$-dimensional ball with the center at the point $z$ and radius $r>0$ and
$|B^{n-1}(z,r)|$ its $(n-1)$-dimensional measure.

The $n$-dimensional Lebesgue measure defines a quasiadditive set function $\Psi(E) = \lvert \varphi^{-1}(E) \rvert$, where $E$ ranges over open sets in $\widetilde{\Omega}$. This set function generates the quasiadditive function $\Psi(A,P)=\Psi(A\times I)$ of open sets $A\subset P_0$,
and $\overline{\Psi}'(z,P)<\infty$ for almost all $z\in P_0$.

Fix an arbitrary point $z\in P_0$, at which the derivatives $\overline{\Phi}{}'(z,P)$ and $\overline{\Psi}'(z,P)$ are finite.
On the section $I_z=\{z\}\times I$ of the cube $P$, we take $\gamma_1,\dots ,\gamma_k$ mutually disjoint intervals
with lengths $l(\gamma_1),\dots ,l(\gamma_k)$ correspondingly. 

Let us denote by $R_i$ the set of points with distance less than $r$ from $\gamma_i$. We consider ring condensers $(\gamma_i,F_i),\,F_i=
\widetilde{\Omega}\backslash(\gamma _i\cup R_i)$. We choose a number $r > 0$ such that the sets $R_1,\dots ,R_k$ are mutually disjoint, $R_i\subset P$ and $\omega_nr<\omega_{n-1}l(\gamma_i),\,i=1,\dots ,k$, where $\omega_n$ and
$\omega_{n-1}$ are the volume of the unit ball and the area of its surface in $\mathbb R^n$ correspondingly. 

Now, applying the inequality
$$
\cp_{q}^{1/q}(\varphi^{-1}(F_0),\varphi^{-1}(F_1);\Omega)
\leq\Phi(\widetilde{\Omega}\setminus(F_0\cup F_1))^{\frac{p-q}{pq}}
\cp_{p}^{1/p}(F_0,F_1;\widetilde{\Omega}),
$$
where we set $\Phi(\widetilde{\Omega}\setminus(F_0\cup F_1))^{\frac{p-q}{pq}}:=K_{p,p}(\varphi;\Omega)$ in the case $p=q$,
the upper capacity estimate \cite{GResh}
$$
{{\cp}}_p(\gamma_i,F_i;\widetilde{\Omega})\le\frac{|R_i|}{r^p}
\le 2\omega_{n-1}b_ir^{n-1-p},
$$
and the lower capacity estimate
$$
{{\cp}}_q(\varphi^{-1}(\gamma_i),\varphi^{-1}(F_i);\Omega)\ge
c\frac{(\diam\varphi^{-1}(\gamma_i))^{q/(n-1)}}
{|\varphi^{-1}(R_i)|^{(1-n+q)/(n-1)}},
$$
given by Lemma~\ref{lemcap}, we obtain
$$
\diam\varphi^{-1}(\Delta_i)
\le c_1\cdot r^{(n-1-p)(n-1)/p}
\Phi(R_i)^{(p-q)(n-1)/pq}\Psi(R_i)^{(1-n+q)/q}b_i^{(n-1)/p}.
$$
Summing overy $i=1,\dots ,k$, applying the H\"older inequality and using the definition of the countably additive set function, we have
\begin{multline}
\sum\limits_{i=1}^{k}\diam\varphi^{-1}(\Delta_i)\le c_1
\left(\frac{\Phi(B^{n-1}(z,r),P)}{|B^{n-1}(z,r)|}\right)^
{((p-q)/(n-1))/pq}\\
\times\left(\frac{\Psi(B^{n-1}(z,r),P)}{|B^{n-1}(z,r)|}\right)^
{(1-n+q)/q}\left(\sum\limits_{i=1}^{k}l(\gamma_i)\right)^{(n-1)/p}.
\nonumber
\end{multline}
Passing to the limit $r\to 0$, we obtain
$$
\sum\limits_{i=1}^k\diam\varphi^{-1}(\gamma_i)\le c_2
\left(\sum\limits_{i=1}^k l(\gamma_i)\right)^{(n-1)/p},
$$
which implies that $\varphi^{-1}\in {\ACL}(\widetilde{\Omega})$.

Now, we show that $\varphi^{-1}$ is differentiable almost everywhere in $\widetilde{\Omega}$.
For every point $y\in \widetilde{\Omega}$ we consider the ring condenser
$(F_0,F_1)$, where
$F_0=\{z\vert\,|y-z|\le r\},\ F_1=\{z\vert\,|y-z|\ge 2r\}$.
Then ${{\cp}}_p(F_0,F_1;\widetilde{\Omega})=c\cdot r^{n-p}$, where $c$ is some constant.
Using the lower capacity estimates we obtain
$$
\left(\frac{\diam\varphi^{-1}(F_0)}r\right)^{pq}
\le c\left(\frac{\Phi(B)}{|B|}\right)^{(n-1)(p-q)}
\left(\frac{\Psi(B)}{|B|}\right)^{pq+p-np}.
$$
Here, as before, $\Psi(B)=|\varphi^{-1}(B)|$,
$B=\widetilde{\Omega}\backslash  F_1$. Passing to the limit while $r\to 0$, for almost all points
$y\in \widetilde{\Omega}$, we obtain
$$
L(y,\varphi^{-1})\le c(\overline{\Phi}'(y))^{(n-1)(p-q)/pq}
(\overline{\Psi}'(y))^{(1-n+q)/q},
$$
where
$$
L(y,\varphi^{-1})=\varlimsup_{r\to 0}(\max_{|y-z|=r}|\varphi^{-1}(y)-
\varphi^{-1}(z)|/r),
$$
and the values $\overline{\Phi}'=\varlimsup\limits_{r\to 0}(\Phi(B)/|B|)$
and $\overline{\Psi}'(y)=\varlimsup\limits_{r\to 0}(\Psi(B)/|B|)$
are finite almost everywhere in $\widetilde{\Omega}$. By using Stepanov's theorem, \cite{Fe69} we obtain that $\varphi^{-1}$
is differentiable almost everywhere in $\widetilde{\Omega}$.
\end{proof}

By using this theorem, we obtain the following duality property of composition operators on Sobolev spaces \cite{U93}. 

\begin{thm}
\label{CompDual} Let a homeomorphism $\varphi:\Omega\to\widetilde{\Omega}$,  $\Omega,\widetilde{\Omega}\subset\mathbb R^n$, generate a bounded composition operator 
\[
\varphi^{\ast}:L^1_p(\widetilde{\Omega})\to L^1_q(\Omega),\,\,\,n-1<q\leq p<\infty.
\]
Then the inverse mapping $\varphi^{-1}:\widetilde{\Omega}\to\Omega$ induces a bounded composition operator 
\[
\left(\varphi^{-1}\right)^{\ast}:L^1_{q'}(\Omega)\to L^1_{p'}(\widetilde{\Omega}),\,\,\,n-1<p'\leq q'<\infty,
\]
where $p'=p/(p-n+1)$ and $q'=q/(q-n+1)$.
\end{thm}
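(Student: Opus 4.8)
The plan is to deduce the statement from the analytic descriptions of composition operators. Concretely, I will show that the mapping $\varphi^{-1}$, which by Theorem~\ref{reginv} is an $\ACL$-mapping differentiable almost everywhere in $\widetilde\Omega$, belongs to $L^1_{p',\loc}(\widetilde\Omega)$, has finite distortion, and satisfies the integral condition of Theorem~\ref{thm:AnalyticPQ} for the exponent pair $(q',p')$ (the $\esssup$-condition of Theorem~\ref{thm:AnalyticPP} when $p=q$); those theorems then furnish the bounded operator $(\varphi^{-1})^{\ast}$. Observe first that $t\mapsto t/(t-n+1)$ is strictly decreasing on $(n-1,\infty)$, so $q\le p$ yields $1<p'\le q'<\infty$, which is the range to which Theorems~\ref{thm:AnalyticPQ} and~\ref{thm:AnalyticPP} apply.

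The engine is a pointwise comparison of distortions. For an invertible matrix $A$ with singular values $\sigma_1\ge\cdots\ge\sigma_n>0$ one has $|A^{-1}|=\sigma_1\cdots\sigma_{n-1}/|\det A|\le|A|^{n-1}/|\det A|$; hence, writing $\lambda'=\lambda/(\lambda-n+1)$ for $\lambda>n-1$ and using $\lambda'-1=(n-1)/(\lambda-n+1)$,
$$\frac{|A^{-1}|^{\lambda'}}{|\det A^{-1}|}=|A^{-1}|^{\lambda'}|\det A|\le\frac{|A|^{(n-1)\lambda'}}{|\det A|^{\lambda'-1}}=\Bigl(\frac{|A|^{\lambda}}{|\det A|}\Bigr)^{(n-1)\lambda'/\lambda},$$
so the normalized dilatations satisfy $K_{\lambda'}(A^{-1})\le K_{\lambda}(A)^{n-1}$. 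At almost every $y$ where $\varphi^{-1}$ is differentiable with $J(y,\varphi^{-1})\neq0$, the inverse function property of homeomorphisms gives that $\varphi$ is differentiable at $x=\varphi^{-1}(y)$ with $D\varphi(x)=(D\varphi^{-1}(y))^{-1}$ and $J(x,\varphi)=J(y,\varphi^{-1})^{-1}$; applying the inequality with $\lambda=q$ and inserting $K_q(\varphi)(x)=K_p(\varphi)(x)|J(x,\varphi)|^{-1/\kappa}$, where $1/\kappa=1/q-1/p$, we get
$$K_{q'}(\varphi^{-1})(y)\le K_q(\varphi)(\varphi^{-1}(y))^{n-1}=K_p(\varphi)(\varphi^{-1}(y))^{n-1}\,|J(y,\varphi^{-1})|^{(n-1)/\kappa}.$$

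To close the list of analytic conditions I first check that $\varphi^{-1}$ has finite distortion. On the set where $\varphi^{-1}$ is differentiable with $J(y,\varphi^{-1})=0$, the linearization of $\varphi^{-1}$ confines $\varphi^{-1}(B(y,r))$ to a set of measure $o(r^n)$, so the volume derivative of the set function $\Psi(E)=|\varphi^{-1}(E)|$ vanishes at such $y$; inserting $\overline{\Psi}'(y)=0$ into the pointwise estimate $L(y,\varphi^{-1})\le c\,(\overline{\Phi}'(y))^{(n-1)(p-q)/pq}(\overline{\Psi}'(y))^{(q-n+1)/q}$ obtained in the proof of Theorem~\ref{reginv}, and using $q-n+1>0$, forces $L(y,\varphi^{-1})=0$, whence $D\varphi^{-1}(y)=0$. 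Thus $D\varphi^{-1}=0$ a.e.\ on $\{J(\cdot,\varphi^{-1})=0\}$ and the displayed dilatation inequality holds a.e.\ on $\widetilde\Omega$. Raising it to the power $\kappa'$, where $1/\kappa'=1/p'-1/q'=(n-1)/\kappa$ so that $(n-1)\kappa'=\kappa$, gives $K_{q'}(\varphi^{-1})(y)^{\kappa'}\le K_p(\varphi)(\varphi^{-1}(y))^{\kappa}\,|J(y,\varphi^{-1})|$ a.e.; integrating and applying the change of variables formula~\eqref{chvf} to the $\ACL$-mapping $\varphi^{-1}$,
$$\int_{\widetilde\Omega}K_{q'}(\varphi^{-1})(y)^{\kappa'}\,dy\le\int_{\widetilde\Omega}K_p(\varphi)(\varphi^{-1}(y))^{\kappa}|J(y,\varphi^{-1})|\,dy\le\int_{\Omega}K_p(\varphi)(x)^{\kappa}\,dx=K_{p,q}(\varphi;\Omega)^{\kappa}<\infty .$$
This is exactly the integral condition of Theorem~\ref{thm:AnalyticPQ}; moreover, writing $|D\varphi^{-1}|^{p'}=(|D\varphi^{-1}|^{q'}/|J(\cdot,\varphi^{-1})|)^{p'/q'}|J(\cdot,\varphi^{-1})|^{p'/q'}$ and using H\"older's inequality together with $|J(\cdot,\varphi^{-1})|\in L_{1,\loc}(\widetilde\Omega)$ gives $\varphi^{-1}\in L^1_{p',\loc}(\widetilde\Omega)$. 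Theorem~\ref{thm:AnalyticPQ} (respectively Theorem~\ref{thm:AnalyticPP} when $p=q$, where the same computation yields $\esssup_{\widetilde\Omega}K_{p'}(\varphi^{-1})\le K_{p,p}(\varphi;\Omega)^{n-1}$) now produces the bounded composition operator $(\varphi^{-1})^{\ast}:L^1_{q'}(\Omega)\to L^1_{p'}(\widetilde\Omega)$, with $\|(\varphi^{-1})^{\ast}\|\le K_{p,q}(\varphi;\Omega)^{n-1}$.

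The main obstacle is the careful handling of the exceptional sets. One must justify that the pointwise identities $D\varphi^{-1}\circ\varphi=(D\varphi)^{-1}$ and $J(\cdot,\varphi^{-1})\circ\varphi=J(\cdot,\varphi)^{-1}$, and hence the comparison of dilatations, hold outside a set of measure zero; that $\varphi^{-1}$ genuinely is a mapping of finite distortion (the argument above, which rests on the capacity estimate of Theorem~\ref{reginv} and uses the hypothesis $q>n-1$ precisely through the positivity of the exponent $(q-n+1)/q$); and that every change of variables is performed with the $\ACL$-mapping $\varphi^{-1}$, so that no Luzin $N$-property of $\varphi$ itself is needed. The Lebesgue-negligibility of the exceptional sets involved is obtained from Corollary~\ref{cor:Capacity} together with the fact that, for $q>n-1$, a set of $q$-capacity zero has Lebesgue measure zero.
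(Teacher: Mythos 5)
Your argument is, at its core, the same as the paper's: both proofs rest on the pointwise bound $|D\varphi^{-1}(y)|\le |D\varphi(x)|^{n-1}/|J(x,\varphi)|$ at $y=\varphi(x)$ (your singular-value computation is exactly the adjugate estimate $|\adj A|\le |A|^{n-1}$ used in the paper), on the exponent identity $q'p'/(q'-p')=pq/\bigl((p-q)(n-1)\bigr)$, on a change of variables carried out with the a.e.\ differentiable mapping $\varphi^{-1}$ supplied by Theorem~\ref{reginv}, and finally on the sufficiency parts of Theorems~\ref{thm:AnalyticPP} and~\ref{thm:AnalyticPQ}, yielding the same bound $\|(\varphi^{-1})^{\ast}\|\le K_{p,q}(\varphi;\Omega)^{n-1}$. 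Where you genuinely differ is in the technical underpinning: the paper identifies the weak differential of $\varphi^{-1}$ through the adjugate formula of \cite{P93} and simply declares $|D\varphi^{-1}|=0$ on the exceptional set $\varphi(S\cup Z)$, whereas you recover $D\varphi(x)=(D\varphi^{-1}(y))^{-1}$ from classical inverse differentiability on $\{J(\cdot,\varphi^{-1})\neq 0\}$, prove finite distortion of $\varphi^{-1}$ from the estimate on $L(y,\varphi^{-1})$ inside the proof of Theorem~\ref{reginv} (a hypothesis of Theorem~\ref{thm:AnalyticPQ} the paper leaves implicit), and verify $\varphi^{-1}\in L^1_{p',\loc}(\widetilde\Omega)$ explicitly; these are worthwhile additions, not a different theorem.

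One caveat: the tool you cite for the null-set transfer --- Corollary~\ref{cor:Capacity} together with ``$q$-capacity zero implies measure zero'' --- does not do the job, since that corollary controls preimages under $\varphi$ of sets of $p$-capacity zero, while what you must rule out is that the image under $\varphi$ of a Lebesgue-null set $N\subset\Omega$ (where the weak differential of $\varphi$ is undefined, or differs from the classical one, or where $K_p$ exceeds its essential bound) meets $\{J(\cdot,\varphi^{-1})\neq0\}$ in positive measure. The correct argument uses $\varphi^{-1}$ itself: on its differentiability set the area formula (or formula~\eqref{chvf} applied to $\varphi^{-1}$) gives $\int_A|J(y,\varphi^{-1})|\,dy\le|\varphi^{-1}(A)|$, so if $A\subset\{J(\cdot,\varphi^{-1})\neq0\}$ had positive measure and $\varphi^{-1}(A)\subset N$ we would get $0<\int_A|J(y,\varphi^{-1})|\,dy\le|N|=0$, a contradiction. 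The same observation is what makes the composition $K_p(\varphi)^{\kappa}\circ\varphi^{-1}$ well defined a.e.\ (independently of the chosen representative) in your integral inequality. With this replacement your proof is complete and coincides in substance with the paper's, which disposes of the same point by citing \cite{P93}.
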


\begin{proof}
By Theorem~\ref{reginv} we have that $\varphi^{-1}\in W^1_{1,\loc}(\widetilde{\Omega})$. Then by \cite{P93} (see also \cite{CHM,GU10-1,HKM06}) we have
$$
|D\varphi^{-1}(y)|=
\begin{cases}
\biggl(\frac{|\adj D\varphi|(x)}{|J(x,\varphi)|}\biggr)_{x=\varphi^{-1}(y)}
& \text{if}\quad x\in \Omega\setminus \left(S\cup Z\right),
\\
\,\,0 & \text{otherwise}.
\end{cases}
$$
Hence,
\[
|D\varphi^{-1}(y)|\leq\frac{|D\varphi(x)|^{n-1}}{|J(x,\varphi)|},
\]
for almost all $x\in \Omega\setminus \left(S\cup Z\right)$, $y=\varphi(x)\in \widetilde{\Omega}\setminus \varphi\left(S\cup Z\right)$,
and  
$$
|D\varphi^{-1}(y)|=0\,\,\text{for almost all}\,\,y\in \varphi(S).
$$

Now, taking into account that
\[
\frac{q'p'}{q'-p'}=\frac{pq}{(p-q)(n-1)}
\]
we obtain 
\[
\int\limits _{\widetilde{\Omega}}\left(\frac{|D\varphi^{-1}(y)|^{q'}}{|J(y,\varphi^{-1})|}\right)^{p'/(q'-p')}\, dy\le\int\limits _{\Omega}\left(\frac{|D\varphi(x)|^{p}}{|J(x,\varphi)|}\right)^{q/(p-q)}\, dx,
\]
where in the case $p = q$ we have $p' = q'$ and $L_\infty$-norms instead of integrals. Thus, by Theorems~\ref{thm:AnalyticPP} and \ref{thm:AnalyticPQ}, we have a bounded composition operator
\[
\left(\varphi^{-1}\right)^{\ast}:L^1_{q'}(\Omega)\to L^1_{p'}(\widetilde{\Omega}),\,\,\,n-1<p'\leq q'<\infty.
\]

\end{proof}

\begin{rem}
In the case $n=2$ we have $p'=p/(p-1)$, $q'=q/(q-1)$ and $p''=p$, $q''=q$. Hence the homeomorphism $\varphi:\Omega\to\widetilde{\Omega}$,  $\Omega,\widetilde{\Omega}\subset\mathbb R^n$, induces a bounded composition
operator 
\[
\varphi^{\ast}:L^1_p(\widetilde{\Omega})\to L^1_q(\Omega),\,\,\,1<q\leq p<\infty,
\]
if and only if the inverse mapping $\varphi^{-1}:\widetilde{\Omega}\to\Omega$ induces a bounded composition operator 
\[
\left(\varphi^{-1}\right)^{\ast}:L^1_{q'}(\Omega)\to L^1_{p'}(\widetilde{\Omega}),\,\,\,1<p'\leq q'<\infty.
\]

In the case $n\ne 2$ we have 
$$
p'' =\left(p'\right)'=\frac{p}{(n-1)^2-p(n-2)}\ne p, \,\,\text{if}\,\,p'>n-1,
$$
$$
q'' =\left(q'\right)'=\frac{q}{(n-1)^2-q(n-2)}\ne q, \,\,\text{if}\,\,q'>n-1,
$$
and this case is more complicated.
\end{rem}

The weak regularity of the inverse mapping in the case $q=n-1$ is more complicated and requires additional assumptions.

\begin{thm}
\label{inv_op}
Let a homeomorphism $\varphi : \Omega\to \widetilde{\Omega}$
between two domains $\Omega$ and $\widetilde{\Omega} \subset \mathbb R^n $, $n\geq 2$, generate by the composition rule $\varphi^{\ast}f=f\circ\varphi$ a bounded operator 
$$
\varphi^{\ast}: L^1_{\infty}(\widetilde{\Omega})\to L^1_{n-1}(\Omega).
$$
In addition, suppose that $\varphi$ possesses the Luzin $N$-property and has finite distortion. Then the inverse mapping $\varphi^{-1}: \widetilde{\Omega}\to \Omega$ generates the composition operator
$$
(\varphi^{-1})^{\ast}: L^1_{\infty}(\Omega)\to L^1_{1}(\widetilde{\Omega})
$$
and belongs to the Sobolev space $L^1_1(\widetilde{\Omega})$.
\end{thm}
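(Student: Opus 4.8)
The plan is to turn the assertion into an integrability statement for $D\varphi^{-1}$ by means of Theorem~\ref{thm:AnalyticInfP}, and then to repeat — now at the limiting exponent $q=n-1$ — the change-of-variables argument from the proof of Theorem~\ref{CompDual}. First I would apply Theorem~\ref{thm:AnalyticInfP} to $\varphi$ with exponent $n-1$: the boundedness of $\varphi^{\ast}\colon L^1_{\infty}(\widetilde\Omega)\to L^1_{n-1}(\Omega)$ is equivalent to $\varphi\in L^1_{n-1}(\Omega)$, so that $\int_{\Omega}|D\varphi(x)|^{n-1}\,dx<\infty$; since $\varphi$ is a homeomorphism this also gives $\varphi\in W^1_{n-1,\loc}(\Omega)$, and the finite-distortion hypothesis gives $D\varphi=0$ a.e.\ on $Z=\{J(\cdot,\varphi)=0\}$. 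Then I would apply Theorem~\ref{thm:AnalyticInfP} to $\varphi^{-1}$ with exponent $1$: the desired boundedness of $(\varphi^{-1})^{\ast}\colon L^1_{\infty}(\Omega)\to L^1_1(\widetilde\Omega)$ is equivalent to $\varphi^{-1}\in L^1_1(\widetilde\Omega)$, so the whole problem reduces to proving that $\varphi^{-1}$ is weakly differentiable on $\widetilde\Omega$ with $\int_{\widetilde\Omega}|D\varphi^{-1}(y)|\,dy<\infty$.

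The crux is the weak regularity of $\varphi^{-1}$ at the endpoint: one needs $\varphi^{-1}\in W^1_{1,\loc}(\widetilde\Omega)$ together with the pointwise Hadamard estimate
\[
|D\varphi^{-1}(y)|\le\Bigl(\frac{|\adj D\varphi|}{|J(\cdot,\varphi)|}\Bigr)(\varphi^{-1}(y))\le\Bigl(\frac{|D\varphi|^{n-1}}{|J(\cdot,\varphi)|}\Bigr)(\varphi^{-1}(y))
\]
for a.e.\ $y\in\widetilde\Omega\setminus\varphi(S\cup Z)$, with $D\varphi^{-1}=0$ a.e.\ on $\varphi(S)$ — exactly the formula used in the proof of Theorem~\ref{CompDual}, where it was furnished by Theorem~\ref{reginv}. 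Theorem~\ref{reginv} needs $q>n-1$: at $q=n-1$, $p=\infty$ the H\"older step in its proof only bounds $\sum_i\diam\varphi^{-1}(\gamma_i)$ by a constant (the Jacobian factor $\Psi(R_i)^{(1-n+q)/q}$ drops out), so a priori $\varphi^{-1}$ is only of locally bounded variation. I would therefore invoke the weak inverse mapping theory at this endpoint, as in \cite{U93,GU10-2}: (i) a homeomorphism in $W^1_{n-1,\loc}$ has $\varphi^{-1}\in BV_{\loc}(\widetilde\Omega)$ (see also \cite{CHM}); (ii) where $\varphi$ is differentiable with $J(x,\varphi)\ne0$, the homeomorphism $\varphi^{-1}$ is differentiable at $y=\varphi(x)$ with $D\varphi^{-1}(y)=(D\varphi(x))^{-1}$, so the absolutely continuous part of $D\varphi^{-1}$ obeys the bound above — and here the Luzin $N$-property of $\varphi$ is used precisely to ensure that the composition of the measurable function $|\adj D\varphi|/|J(\cdot,\varphi)|$ with $\varphi^{-1}$ is measurable, which is the point at which \cite{CHM} has a gap; (iii) the Luzin $N$-property of $\varphi$ is equivalent to the Luzin $(N^{-1})$-property of $\varphi^{-1}$, and for a $BV_{\loc}$ homeomorphism this forces the singular part of $D\varphi^{-1}$ to vanish, upgrading $\varphi^{-1}$ to $W^1_{1,\loc}(\widetilde\Omega)$.

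Once this is in place, the conclusion follows by the change of variables formula \eqref{chvf} for $\varphi$ (valid since $\varphi\in W^1_{1,\loc}$): taking $E=\Omega\setminus(S\cup Z)$, multiplicity $1$ since $\varphi$ is a homeomorphism, and $|\varphi(S\cup Z)|=0$ (by the Luzin $N$-property on $S$ and finite distortion on $Z$), integrating the Hadamard bound gives
\[
\int_{\widetilde\Omega}|D\varphi^{-1}(y)|\,dy\le\int_{\widetilde\Omega\setminus\varphi(S\cup Z)}\Bigl(\frac{|D\varphi|^{n-1}}{|J(\cdot,\varphi)|}\Bigr)(\varphi^{-1}(y))\,dy=\int_{\Omega\setminus(S\cup Z)}|D\varphi(x)|^{n-1}\,dx=\int_{\Omega}|D\varphi(x)|^{n-1}\,dx<\infty.
\]
Thus $\varphi^{-1}\in L^1_1(\widetilde\Omega)$, and Theorem~\ref{thm:AnalyticInfP} gives the bounded operator $(\varphi^{-1})^{\ast}\colon L^1_{\infty}(\Omega)\to L^1_1(\widetilde\Omega)$, with $\|(\varphi^{-1})^{\ast}\|\le\int_{\Omega}|D\varphi(x)|^{n-1}\,dx$. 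The main obstacle is step (iii): passing from $BV_{\loc}$ to $W^1_{1,\loc}$ for $\varphi^{-1}$. For $q>n-1$ this is automatic from Theorem~\ref{reginv}, but at the endpoint it genuinely requires the Luzin $N$-hypothesis — without it the inverse can be merely $BV$ and the statement fails — so essentially all the work is in extracting enough regularity of $\varphi^{-1}$ from that hypothesis; the reduction and the change-of-variables computation are then routine.
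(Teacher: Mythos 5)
Your overall architecture is sound: the two reductions via Theorem~\ref{thm:AnalyticInfP} (boundedness of $\varphi^{\ast}$ into $L^1_{n-1}(\Omega)$ giving $\int_\Omega|D\varphi|^{n-1}\,dx<\infty$, and boundedness of $(\varphi^{-1})^{\ast}$ being equivalent to $\varphi^{-1}\in L^1_1(\widetilde\Omega)$) and your final change-of-variables computation agree with the paper. The gap is exactly at the step you yourself call the crux, your item (iii): the claim that the Luzin $N$-property of $\varphi$ (equivalently the $N^{-1}$-property of $\varphi^{-1}$) forces the singular part of $D\varphi^{-1}$ to vanish for a $BV_{\loc}$ homeomorphism is false. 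Take $n=2$ and $\varphi(x_1,x_2)=(g(x_1),x_2)$, where $g$ is the $1$-Lipschitz inverse of $t\mapsto t+c(t)$ with $c$ the Cantor function: $\varphi$ is Lipschitz, hence lies in $W^1_{n-1,\loc}$ and has the Luzin $N$-property, while $\varphi^{-1}(y_1,y_2)=(y_1+c(y_1),y_2)$ is $BV_{\loc}$ with a nontrivial singular part. What excludes this in the theorem is the finite-distortion hypothesis, not the $N$-property: in the example $|D\varphi|=1$ on a set of positive measure where $J(\cdot,\varphi)=0$. So the upgrade from $BV_{\loc}$ to $W^1_{1,\loc}$ must come from finite distortion (roughly, the variation of $\varphi^{-1}$ over a Borel set $A$ is controlled by $\int_{\varphi^{-1}(A)}|\adj D\varphi|\,dx$, and on preimages of null sets $J=0$ a.e., hence $\adj D\varphi=0$ a.e.\ by finite distortion), while the Luzin $N$-property enters only to make the composed dilatation $y\mapsto\bigl(|\adj D\varphi|/|J(\cdot,\varphi)|\bigr)(\varphi^{-1}(y))$ measurable --- precisely the point the paper criticizes in \cite{CHM}. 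As written, your proposal misattributes the decisive mechanism and, moreover, does not prove it: items (i)--(iii) are supplied by citation to \cite{U93,GU10-2,CHM} rather than by argument, so essentially all of the work you acknowledge as the main obstacle is left out.

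For comparison, the paper's proof bypasses the $BV$-to-$W^{1,1}$ route entirely. Using finite distortion it sets $\adj D\varphi=0$ on the zero set of the Jacobian, applies the pointwise inequality $|J(x,\varphi)|\,|\nabla f|(\varphi(x))\le|\nabla(f\circ\varphi)(x)|\,|\adj D\varphi(x)|$ together with the change of variables (this is where the Luzin $N$-property is used), and obtains the a priori lower bound $\|f\mid L^1_1(\widetilde\Omega)\|\le\|\varphi\mid L^1_{n-1}(\Omega)\|^{n-1}\,\|\varphi^{\ast}f\mid L^1_{\infty}(\Omega)\|$; substituting the cut-off coordinate test functions then gives $\varphi^{-1}\in L^1_{1,\loc}(\widetilde\Omega)$ directly, and the estimate $\int_{\widetilde\Omega}|D\varphi^{-1}(y)|\,dy\le\int_{\Omega}|D\varphi(x)|^{n-1}\,dx$ concludes, as in your last display. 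If you replace your item (iii) by such a finite-distortion argument (or by an honest proof along these lines under the $N$-property), the rest of your proposal goes through.
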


\begin{proof}
Let $f\in L^1_{\infty}(\widetilde{\Omega})$. Then the composition $f\circ\varphi$ is weakly differentiable in $\Omega$.
Since $\varphi$ is a mapping of finite distortion, $D\varphi(x)=0$ at a.e. $x\in \Omega$ where $J(x,\varphi)=0$. Hence, we can define $\adj D\varphi(x)=0$ at such points.
Then  
$$
|J(x,\varphi)||\nabla f|(\varphi(x))\leq |\nabla(f\circ\varphi)|(x)\adj D\varphi(x) \,\,\,\text{for almost all}\,\,\,x\in \Omega,
$$
because 
$$
\min\limits_{|h|=1}|D\varphi(x)\cdot h|=\frac{1}{\max\limits_{|h|=1}|(D\varphi(x))^{-1}\cdot h|}
$$
and
$$
(D\varphi(x))^{-1}=J^{-1}(x,\varphi)\adj D\varphi(x)
$$
if $J(x,\varphi)\ne 0$.
Hence,
\begin{multline}
\|f\mid L^1_1(\widetilde{\Omega})\|=\int\limits_{\widetilde{\Omega}}|\nabla f|(y)~dy=\int\limits_{\Omega}|\nabla f|(\varphi(x))|J(x,\varphi)|~dx\\
\leq
\int\limits_{\Omega}|\nabla (f\circ\varphi)|(\varphi(x))|\adj D\varphi(x)|~dx
\leq 
\int\limits_{\Omega}|\nabla (f\circ\varphi)|(\varphi(x))|D\varphi|^{n-1}(x)~dx\\
\leq \|\varphi\mid L^1_{n-1}(\Omega)\|^{n-1}\cdot \|\varphi^{\ast}f\mid L^1_{\infty}(\Omega)\|
\nonumber
\end{multline}
because by Theorem~\ref{thm:AnalyticInfP} the mapping $\varphi$ belongs to $L^1_{n-1}(\Omega)$.
Thus, we have the lower estimate for the composition operator
$$
\|f\mid L^1_1(\widetilde{\Omega})\|\leq \|\varphi\mid L^1_{n-1}(\Omega)\|^{n-1}\cdot \|\varphi^{\ast}f\mid L^1_{\infty}(\Omega)\|
$$
whenever $\varphi^{\ast}f$ belongs to $L^1_{n-1}(\Omega)$.
Therefore, the inverse operator $(\varphi^{\ast})^{-1}=(\varphi^{-1})^{\ast}$ is a bounded operator 
$$
(\varphi^{-1})^{\ast} : L^1_{\infty}(\Omega)\cap L^1_{n-1}(\Omega)\to L^1_1(\widetilde{\Omega}).
$$
Fix a cutoff function $\eta\in C_0^{\infty}(B(0,2))$ such that $\eta=1$ on $B(0,1)$. By substituting into the inequality
$$
\|(\varphi^{-1})^{\ast}g\mid L^1_1(\widetilde{\Omega})\|\leq K_{\infty,1}(\varphi^{-1};\widetilde{\Omega})) \|g\mid L^1_{\infty}(\Omega)\|
$$
the test functions $g_j=(x_j-x_{0j})\eta\bigl(\frac{x-x_0}{r}\bigr)$, $x_0\in \Omega$, $r<\dist(x_0,\partial \Omega)$, we obtain that
$\varphi^{-1}$ belongs to the Sobolev space $L^1_{1,\loc}(\widetilde{\Omega})$.
Now, using the estimate
$$
\int\limits_{\widetilde{\Omega}}|D\varphi^{-1}(y)|~dy\leq\int\limits_{\widetilde{\Omega}}\frac{|D\varphi(\varphi^{-1}(y))|^{n-1}}{|J(\varphi^{-1}(y),\varphi)|}~dy=
\int\limits_{\Omega}|D\varphi(x)|^{n-1}~dx<+\infty
$$
we conclude that $\varphi^{-1}$ belongs to $L^1_1(\widetilde{\Omega})$ and by Theorem~\ref{thm:AnalyticInfP} generates a bounded composition operator from $L^1_{\infty}(\Omega)$ to $L^1_1(\widetilde{\Omega})$.
\end{proof}

By using Theorem~\ref{thm:AnalyticInfP} and Theorem~\ref{inv_op} we obtain the weak regularity of mappings which are inverse to Sobolev homeomorphisms.

\begin{thm}
\label{inv}
Let a homeomorphism $\varphi : \Omega\to \widetilde{\Omega}$ between two domains $\Omega$ and $\widetilde{\Omega} \subset \mathbb R^n $, $n\geq 2$, belong to the Sobolev space $L^1_{n-1}(\Omega)$, possess the Luzin $N$-property and have finite distortion. Then the inverse mapping $\varphi^{-1}: \widetilde{\Omega}\to \Omega$ belongs to the Sobolev space $L^1_1(\widetilde{\Omega})$.
\end{thm}

\subsection{Remarks on the weak regularity and the Luzin $N$-property.} 

Let us recall the following area formula \cite{Fe69, KMZ12}.
Let $\Omega,\widetilde{\Omega}$ be domains in $\mathbb R^n$, and let $\varphi : \Omega\to \widetilde{\Omega}$ be a homeomorphism of $W^1_{1,\loc}(\Omega)$. Then there exists a Borel set $S\subset \Omega$, $|S|=0$, such that the mapping $\varphi:\Omega\setminus S \to \widetilde{\Omega}\setminus\widetilde{S}$, $\widetilde{S}=\varphi(S)$, has the Luzin $N$-property and the area formula
\begin{equation}
\label{af}
\int\limits_{E\setminus S} f(x) |J(x,\varphi)|~dx=\int\limits_{\varphi(E)\setminus\widetilde{S}} \left(f(x)\vert_{x=\varphi^{-1}(y)}\right)~dy
\end{equation}
holds for every measurable set $E\subset \Omega$ and for every nonnegative measurable function $f: \Omega\to\mathbb R$. If a mapping $\varphi$ possesses the Luzin $N$-property (the image of a set of measure zero has measure zero), then $|\varphi (S)|=0$ and the second integral can be rewritten as the integral on $\varphi(E)$:
\begin{equation}
\label{afN}
\int\limits_{E} f(x) |J(x,\varphi)|~dx=\int\limits_{\varphi(E)} \left(f(x)\vert_{x=\varphi^{-1}(y)}\right)~dy.
\end{equation}
In \cite{KMZ12} it was proved that for Sobolev homeomorphisms of $W^1_{1,\loc}(\Omega)$, the area formula (\ref{afN}) is equivalent to the Luzin $N$-condition (the image of a set of measure zero has measure zero).

\begin{rem}
In the change of variables formula (\ref{af}) a measurable function $f: \Omega \to\mathbb R$ is an element of the space of measurable functions $\mathbb X( \Omega)$, i.e. an equivalence class up to a set of measure zero. The representation of measurable functions as equivalence classes is required, in particular, in Egorov's Theorem and Luzin's Theorem \cite{Fe69}.
\end{rem}

The consideration of special representations of measurable functions can lead to confusion. If, for example, we take in the area formula (\ref{af}) the special representation of a measurable function $f: \Omega\to\mathbb R$ such that $f=0$ on $S$, then $f\circ\varphi^{-1}=0$ on $\varphi(S)$, and
\begin{multline}
\label{afex}
\int\limits_{E} f(x) |J(x,\varphi)|~dx=\int\limits_{E\setminus S} f(x) |J(x,\varphi)|~dx=\int\limits_{\varphi(E)\setminus \varphi(S)} \left(f(x)\vert_{x=\varphi^{-1}(y)}\right)~dy
\\
=\int\limits_{\varphi(E)\setminus \varphi(S)} \left(f(x)\vert_{x=\varphi^{-1}(y)}\right)~dy+\int\limits_{\varphi(S)} \left(f(x)\vert_{x=\varphi^{-1}(y)}\right)~dy
=\int\limits_{\varphi(E)} \left(f(x)\vert_{x=\varphi^{-1}(y)}\right)~dy,
\end{multline}
that contradicts the conclusion of \cite{KMZ12}.

This area formula was used in \cite{CHM} for the proof of the following weak regularity theorem. 

\begin{thm}
\label{thm:chm}
Let $\Omega\subset\mathbb R^n$ be a domain and let $\varphi : \Omega\to \widetilde{\Omega}$ be a homeomorphism of $W^1_{n-1,\loc}(\Omega)$ of finite distortion. Then $\varphi^{-1}\in W^1_{1,\loc}(\widetilde{\Omega})$ and $\varphi^{-1}$ is a mapping of finite distortion.
\end{thm}

\vskip 0.2cm

However, the proof of Theorem~\ref{thm:chm} has the following problematic points, which are related to the measure theory.
\vskip 0.2cm
\noindent
{\bf 1.} The weak gradient $g: \widetilde{\Omega} \to \Omega$ of the inverse mapping $\varphi^{-1}: \widetilde{\Omega} \to \Omega$ is defined as the composition of the measurable co-distortion function 
$$
H_{\varphi}(x)=
\begin{cases}
\frac{|\rm{adj} D\varphi(x)|}{J(x,\varphi)}\,\,&\text{if}\,\,x\in \Omega\setminus S\,\,\text{and}\,\,J(x,\varphi)>0,\\
0  &\text{otherwise},
\end{cases}
$$ 
where $S$ is a set of measure zero from (\ref{chvf}), with the inverse mapping $\varphi^{-1}$. Unfortunately, because $\varphi$ does not have the Luzin $N$-property, the composition $g=H_{\varphi}\circ\varphi^{-1}$ can be non-measurable. However, it was not proven in \cite{CHM} that $g=H_{\varphi}\circ\varphi^{-1}$ is a measurable function.

\vskip 0.2cm
\noindent
{\bf 2.} Let us suppose, for simplicity's sake, that $J(x,\varphi) > 0$ for almost all $x \in \Omega$. In \cite{CHM}, by using the special choice of the measurable function $H_{\varphi}$: $H_{\varphi} = 0$ on the set of measure zero $S$, it was calculated that 
\begin{multline}
\label{calc_1}
\int\limits_{\varphi^{-1}(A)} |{\rm{adj}} D\varphi(x)|~dx=\int\limits_{\varphi^{-1}(A)} H_{\varphi}(x) |J(x,\varphi)|~dx=\int\limits_{\varphi^{-1}(A)\setminus S} H_{\varphi}(x) |J(x,\varphi)|~dx\\
=\int\limits_{A\setminus \varphi(S)} \left(H_{\varphi}(x)\vert_{x=\varphi^{-1}(y)}\right)~dy
=\int\limits_{A\setminus \varphi(S)} \left(H_{\varphi}(x)\vert_{x=\varphi^{-1}(y)}\right)~dy
\\
+\int\limits_{\varphi(S)} \left(H_{\varphi}(x)\vert_{x=\varphi^{-1}(y)}\right)~dy
=\int\limits_{A} g(y)~dy,
\end{multline}
for any compact set $A\subset\widetilde{\Omega}$, and it was stated that $g\in L^1_{1,\loc}(\widetilde{\Omega})$ because $\varphi\in W^1_{n-1,\loc}(\Omega)$.

\vskip 0.2cm
\noindent
However, this calculation depends on the choice of the representation of the measurable function $H_{\varphi}$.
Let us consider another representation of this measurable function:
$$
\widetilde{H}_{\varphi}(x) =
\begin{cases}
\frac{|\rm{adj} D\varphi(x)|}{J(x,\varphi)} & \text{if } x \in \Omega \setminus S \text{ and } J(x,\varphi) > 0, \\
1 & \text{otherwise}.
\end{cases}
$$ 
The functions $H_{\varphi}$ and $\widetilde{H}_{\varphi}$ are representations of the same measurable function. However, if we repeat the calculations from \cite{CHM} for $\widetilde{H}_{\varphi}$, we obtain (assuming that $\widetilde{g}$ is measurable) for the function $\widetilde{g} = \widetilde{H}_{\varphi} \circ \varphi^{-1}$:
\begin{multline}
\label{calc_2}
\int\limits_{\varphi^{-1}(A)} |{\rm{adj}} D\varphi(x)|~dx=\int\limits_{\varphi^{-1}(A)} \widetilde{H}_{\varphi}(x) |J(x,\varphi)|~dx=\int\limits_{\varphi^{-1}(A)\setminus S} \widetilde{H}_{\varphi}(x) |J(x,\varphi)|~dx\\
=\int\limits_{A\setminus \varphi(S)} \left(\widetilde{H}_{\varphi}(x)\vert_{x=\varphi^{-1}(y)}\right)~dy
<\int\limits_{A\setminus \varphi(S)} \left(\widetilde{H}_{\varphi}(x)\vert_{x=\varphi^{-1}(y)}\right)~dy
\\
+\int\limits_{\varphi(S)} \left(\widetilde{H}_{\varphi}(x)\vert_{x=\varphi^{-1}(y)}\right)~dy
=\int\limits_{A} \widetilde{g}(y)~dy,
\end{multline}
and so, the integrability of $|{\rm adj} D\varphi(x)|$ does not imply the integrability of $\widetilde{g}$. Thus, we have that the integrability of the suggested "gradient" of $\varphi^{-1}$ depends on the choice of the representation of the measurable co-distortion function $H_{\varphi}$, that does not correspond to the Lebesgue integral theory.

\vskip 0.2cm
\noindent
In addition, this method, which can be referred to as the "method of special representatives", leads to contradictions in geometric measure theory. Indeed, let $f: E \to \mathbb{R}$ be a measurable function. Let us consider a special representative of $f$ such that $f = 0$ on a set $S$ with $|S| = 0$. Then the area formula

\begin{equation*}
\int\limits_{E \setminus S} f(x) |J(x,\varphi)|~dx = \int\limits_{\varphi(E) \setminus \varphi(S)} \left(f(x)\vert_{x=\varphi^{-1}(y)}\right)~dy = \int\limits_{\varphi(E)} \left(f(x)\vert_{x=\varphi^{-1}(y)}\right)~dy,
\end{equation*}
holds without the Luzin $N$-property of the homeomorphism $\varphi$ for all measurable functions $f: E \to \mathbb{R}$. This seems unreasonable.

\vskip 0.2cm

Therefore, the weak regularity of Sobolev homeomorphisms without the Luzin $N$-property, does not have a complete rigorous proof in the article \cite{CHM}. However, it is possible that a rigorous proof could be obtained by modifying the proof given in the article \cite{CHM}.

\vskip1cm

\noindent
{\bf Conclusion.}
In the present review we have tried to give an introduction to the geometric theory of composition operators on Sobolev spaces. Unfortunately, numerous properties of weak $(p,q)$-quasiconformal mappings, like measure and metric distortions, sequences, boundary behavior and so on, have been omitted due to volume constraints. Thank you for reading of the article.

\vskip 1cm

\noindent
Vladimir Gol'dshtein  \,  \hskip 3.2cm Alexander Ukhlov

\noindent
Department of Mathematics   \hskip 2.25cm Department of Mathematics

\noindent
Ben-Gurion University of the Negev  \hskip 1.05cm Ben-Gurion University of the Negev

\noindent
P.O.Box 653, Beer Sheva, 84105, Israel  \hskip 0.7cm P.O.Box 653, Beer Sheva, 84105, Israel

\noindent
E-mail: vladimir@bgu.ac.il  \hskip 2.5cm E-mail: ukhlov@math.bgu.ac.il
\end{document}